\newtheorem{assumption}{Assumption}
\newcommand{\beqn}{\begin{equation}}
\newcommand{\eeqn}{\end{equation}}
\newcommand{\fa}{{\mathfrak a}}
\def\R{\mathbb{R}}
\def\N{\mathbb{N}}
\newcommand{\cC}{\mathcal{C}}
\newcommand{\cK}{\mathcal{K}}
\newcommand{\cO}{\mathcal{O}}
\newcommand{\cS}{{U}}
\newcommand{\cX}{\mathcal{X}}
\newcommand{\bcX}{\mathcal{X}}
\newcommand{\cY}{\mathcal{Y}}
\newcommand{\bcY}{\mathcal{Y}}
\newcommand{\sbinom}[2]{{\textstyle{\binom{#1}{#2}}}}
\newcommand{\satop}[2]{\stackrel{\scriptstyle{#1}}{\scriptstyle{#2}}}
\newcommand{\bsUpsilon}{{\boldsymbol{\Upsilon}}}
\newcommand{\bsalpha}{{\boldsymbol{\alpha}}}
\newcommand{\bsbeta}{{\boldsymbol{\beta}}}
\newcommand{\bsgamma}{{\boldsymbol{\gamma}}}
\newcommand{\bstau}{\boldsymbol{\tau}}
\newcommand{\bsnu}{{\boldsymbol{\nu}}}
\newcommand{\bsk}{{\boldsymbol{k}}}
\newcommand{\bsm}{{\boldsymbol{m}}}
\newcommand{\bse}{{\boldsymbol{e}}}
\newcommand{\bsx}{{\boldsymbol{x}}}
\newcommand{\bsy}{{\boldsymbol{y}}}
\newcommand{\bsz}{{\boldsymbol{z}}}
\newcommand{\bsu}{{\boldsymbol{u}}}
\newcommand{\bhalf}{{\textstyle\boldsymbol{\frac{1}{2}}}}
\newcommand{\rd}{{\mathrm{d}}}
\newcommand{\bbR}{{\mathbb{R}}}
\newcommand{\bbA}{{\mathbb{A}}}
\newcommand{\bbB}{{\mathbb{B}}}
\newcommand{\bbN}{\mathbb{N}}
\newcommand{\calI}{\mathcal{I}}
\newcommand{\calO}{\mathcal{O}}
\newcommand{\calW}{\mathcal{W}}
\newcommand{\cL}{\mathcal{L}}
\newcommand{\setu}{\mathrm{\mathfrak{u}}}
\newcommand{\setv}{\mathrm{\mathfrak{v}}}
\newcommand{\KL}{Karhunen-Lo\`eve}
\newcommand{\indx}{{\mathfrak F}}
\newcommand{\mask}[1]{{}}
\newcommand{\bszero}{{\boldsymbol{0}}}
\newcommand{\calP}{\mathcal P}
\newcommand{\be}{\begin{equation}}
\newcommand{\ee}{\end{equation}}
\newcommand{\ba}{\begin{array}}
\newcommand{\ea}{\end{array}}
\newcommand{\beas}{\begin{eqnarray*}}
\newcommand{\eeas}{\end{eqnarray*}}
\newcommand{\bea}{\begin{eqnarray}}
\newcommand{\eea}{\end{eqnarray}}
\newcommand{\sell}{s_{\ell}}
\newcommand{\msell}{s_{\ell-1}}
\newcommand{\Ylangle}{{_{\cY'}}\langle}
\newcommand{\Yrangle}{\rangle_{\cY}}
\title{Multi-level higher order QMC Galerkin discretization
for \\
affine parametric operator equations}
\author{Josef Dick\footnotemark[2]
 \and Frances Y. Kuo\footnotemark[2]
 \and Quoc T. Le Gia\footnotemark[2]
 \and Christoph Schwab\footnotemark[3]}
\begin{document}

\maketitle

\renewcommand{\thefootnote}{\fnsymbol{footnote}}

 \footnotetext[2]{School of Mathematics and Statistics,
                  University of New South Wales, Sydney NSW 2052, Australia
                  ({\tt josef.dick@unsw.edu.au}, {\tt f.kuo@unsw.edu.au}, {\tt
                  qlegia@unsw.edu.au}). 
                  }
 \footnotetext[3]{Seminar for Applied Mathematics, ETH Z\"urich, ETH Zentrum,
                  HG G57.1, CH8092 Z\"urich, Switzerland ({\tt christoph.schwab@sam.math.ethz.ch}).
                  }

\renewcommand{\thefootnote}{\arabic{footnote}}

\begin{abstract}
We develop a convergence analysis of a multi-level algorithm combining
higher order quasi-Monte Carlo (QMC) quadratures with general
Petrov-Galerkin discretizations of countably affine parametric operator
equations of elliptic and parabolic type, extending both the multi-level
first order analysis in [\emph{F.Y.~Kuo, Ch.~Schwab, and I.H.~Sloan,
Multi-level quasi-Monte Carlo finite element methods for a class of
elliptic partial differential equations with random coefficient}
({Found.\ Comp.\ Math., 2015})] and the single level higher order
analysis in [\emph{J.~Dick, F.Y.~Kuo, Q.T.~Le~Gia, D.~Nuyens, and
Ch.~Schwab, Higher order QMC Galerkin discretization for parametric
operator equations} ({SIAM J.\ Numer.\ Anal., 2014})]. We cover, in
particular, both definite as well as indefinite, strongly elliptic systems
of partial differential equations (PDEs) in non-smooth domains, and
discuss in detail the impact of higher order derivatives of {\KL}
eigenfunctions in the parametrization of random PDE inputs on the
convergence results. Based on our \emph{a-priori} error bounds, concrete
choices of algorithm parameters are proposed in order to achieve a
prescribed accuracy under minimal computational work. Problem classes and
sufficient conditions on data are identified where multi-level higher
order QMC Petrov-Galerkin algorithms outperform the corresponding single
level versions of these algorithms. Numerical experiments confirm the theoretical results.
\end{abstract}

\begin{keywords}
Quasi-Monte Carlo methods, multi-level methods, interlaced polynomial
lattice rules, higher order digital nets, affine parametric operator
equations, infinite dimensional quadrature, Petrov-Galerkin
discretization.
\end{keywords}

\begin{AMS}
65D30, 65D32, 65N30
\end{AMS}

\pagestyle{myheadings} \thispagestyle{plain}

\section{Introduction}
\label{sec:Intro}

The efficient numerical computation of statistical quantities for
solutions of partial differential and of integral equations with random
inputs is a key task in uncertainly quantification and in the sciences. In
this paper, we combine the use of \emph{higher order quasi-Monte Carlo}
(QMC) \emph{quadrature} with \emph{Petrov-Galerkin discretization} in
a \emph{multi-level} algorithm to estimate a quantity of interest which
has been expressed as an \emph{infinite dimensional integral}. This paper
applies the new QMC theory developed in \cite{DKLNS13} (for a single level
algorithm) to the QMC Finite Element multi-level algorithm introduced in \cite{KSS13}, to
yield a potentially reduced exponent $a$ in the cost bound of
$\calO(\varepsilon^{-a})$, subject to a fixed error threshold
$\varepsilon>0$, with the constant implied in $\calO(\cdot)$ being independent
of the dimension of the integration domain.

The multi-level algorithm has first been introduced in \cite{H98} in the
context of integral equations and was independently rediscovered in
\cite{Gi08} in the context of simulation of stochastic differential
equations. A combination of the multi-level approach with the Monte Carlo
method has recently been developed for elliptic problems with random input
data in \cite{BSZ11,CGST11,CST13,HPS13,TSGU13,CHNST14}.

Let $\bsy := (y_j)_{j\ge 1}$ denote the possibly countable set of
parameters from a domain $U \subseteq \R^\N$, and let $A(\bsy)$ denote a
$\bsy$-parametric bounded linear operator between suitably defined spaces
$\bcX$ and $\bcY'$. We consider parametric operator equations: given $f\in
\bcY'$, for every $\bsy\in U$ find $u(\bsy)\in \bcX$ such that
\begin{equation}\label{eq:main}
  A(\bsy)\, u (\bsy) = f \;.
\end{equation}
Such parametric operator equations arise from partial differential
equations with random field input, see, e.g.,
\cite{SchwabGittelsonActNum11} and the references there. Following
\cite{ScMCQMC12,DKLNS13}, we consider in this paper problems where
$A(\bsy)$ has ``\emph{affine}'' \emph{parameter dependence}, i.e., there
exists a sequence $\{ A_j\}_{j\geq 0} \subset \cL(\bcX,\bcY')$ such that
for every $\bsy \in U$ we can write
\begin{equation}\label{eq:Baffine}
  A(\bsy) = A_0 + \sum_{j\ge 1} y_j\, A_j \;,
\end{equation}
and we restrict ourselves to the bounded (infinite-dimensional) parameter
domain
\[
 {U = [-\tfrac{1}{2},\tfrac{1}{2}]^\bbN}
\;.
\]
Some assumptions on the ``\emph{nominal}'' (or ``\emph{mean field}'')
operator $A_0$ and the ``\emph{fluctuation}'' operators $A_j$ are required
to ensure that the sum in \eqref{eq:Baffine} converges, and to ensure its
well-posedness, i.e., the existence and uniqueness of the parametric
solution $u(\bsy)$ in \eqref{eq:main} for all $\bsy\in U$; sufficient
conditions will be specified in \S\ref{sec:pre}. Further assumptions on
$A_0$ and $A_j$ are required for our regularity and approximation results;
these will also be given in \S\ref{sec:pre}. For now we mention only one
key assumption: there exists $\bar{t}\ge 0$ such that for every $0\le
t\le \bar{t}$ there exists a $0 < p_t < 1$ for which
\begin{equation} \label{eq:psumpsi0}
  \sum_{j\ge 1} \| A_j\|_{\cL(\bcX_t,\bcY'_t)}^{p_t} \,<\, \infty
  \qquad\mbox{and}\qquad
  \sum_{j\ge 1} \| A_j^*\|_{\cL(\bcY_t,\bcX'_t)}^{p_t} \,<\, \infty\;,
\end{equation}
where $\{\bcX_t\}_{t\ge 0}$ and $\{\bcY_t\}_{t\ge 0}$ denote scales of
smoothness spaces (see \eqref{eq:SmScal} ahead), with $\bcX_0 = \bcX$ and
$\bcY_0=\bcY$, and $\|\cdot\|_{\cL(\bcX_t,\bcY'_t)}$ denotes the operator
norm for the set of all bounded linear mappings from $\bcX_t$ to
$\bcY'_t$. As we will explain, it is natural to assume that $0<p_0\le
p_1\le\cdots\le p_{\bar{t}}< 1$. Assumption \eqref{eq:psumpsi0}
implies a decay of the fluctuations $A_j$ in \eqref{eq:Baffine}, with
stronger decay as the value of $p_0$ decreases.

For a quantity of interest (or ``goal'' functional) $G\in \bcX'$,
``ensemble averages'' of all possible realizations
of the operator equation
\eqref{eq:main} take the form of an integral over~$U$,
\begin{equation} \label{eq:int}
 I(G(u)) \,:=\, \int_U G(u(\bsy)) \,\rd\bsy\;.
\end{equation}
This calls for the consideration of QMC methods for numerical integration.
A \emph{single level} QMC strategy was developed and analyzed in
\cite{KSS12}, and subsequently generalized and improved in
\cite{ScMCQMC12,DKLNS13}. It contained three approximations: (i)
dimension-truncating the infinite sum in \eqref{eq:Baffine} to $s$ terms
(see \S\ref{sec:dimtrunc}), (ii) solving the corresponding operator
equation \eqref{eq:main} using a Finite Element method,
or more generally, Petrov-Galerkin discretization based on two dense,
one-parameter families $\{\cX^h\}_{h>0}\subset \cX$,
$\{\cY^h\}_{h>0}\subset \cY$ of finite dimensional
subspaces (see \S\ref{ssec:GalDisc}),
and (iii) approximating the corresponding integral \eqref{eq:int}
using a QMC rule with $N$ points in $s$ dimensions. Thus \eqref{eq:int}
was approximated by
\begin{equation} \label{eq:qmcG}
Q_{s,N}(G(u^h_s))
:=
\frac{1}{N} \sum_{n=0}^{N-1} G\big(u^h_s\big( {\bsy_n - {\bf \tfrac{1}{2}}}\big)\big)\;,
\end{equation}
where $\{\bsy_0,\ldots,\bsy_{N-1}\} \subset [0,1]^s$ are $N$ suitably
chosen QMC points, and the shift of coordinates by $\bhalf$ in
\eqref{eq:qmcG} accounts for the translation from $[0,1]^s$ to
$[-\frac{1}{2},\frac{1}{2}]^s$.

In \cite{KSS12}, first order QMC methods known as \emph{randomly shifted
lattice rules} were considered, together with first order finite
element methods, to achieve an overall root-mean-square error bound (with
respect to the random shift) of
\begin{equation} \label{eq:big1}
  \mbox{r.m.s.\ error} \,=\,
  \calO\left(s^{-2(1/p_0-1)} + N^{-\min(1/p_0-1/2,1-\delta)} + h^{t+t'}\right),
  \quad\delta>0\;,
\end{equation}
for a second order, elliptic PDE in the bounded spatial domain $D\subset \mathbb{R}^d$,
\begin{align} \label{eq:PDE1}
  &-\nabla \cdot \left(a(\bsy)\nabla u(\bsy)\right) \,=\, f\;, \quad
  u(\bsy)|_{\partial D} = 0\;,\quad
  a(\bsy) \,=\, a_0(\cdot) + \sum_{j\ge 1} y_j\,\psi_j(\cdot)\;,
\end{align}
which corresponds to the special case with $\cX=\cY=H^1_0(D)$, where
$0<p_0<1$, $0\le t,t'\le 1$, $f\in H^{-1+t}(D)$ and $G\in H^{-1+t'}(D)$.
The result is then generalized in \cite{ScMCQMC12} to the general affine
family of operator equations.
The implied constant in the bound \eqref{eq:big1}
and the QMC convergence rate with respect to $N$ are
independent of the integration dimension~$s$, and this is achieved by
choosing appropriate ``\emph{product and order dependent $($POD$)$
weights}'' in the function space setting for the QMC analysis. A suitable
\emph{generating vector} for the required lattice rule can be constructed
using a \emph{component-by-component} (CBC) algorithm, at a
(pre-computation) cost of $\calO(s\,N\log N + s^2 N)$ operations.

The QMC convergence rate in \eqref{eq:big1} was capped at order one in
\cite{KSS12,ScMCQMC12}, but this limitation was overcome in \cite{DKLNS13}
by considering a family of \emph{higher order digital nets} known as
(deterministic) \emph{interlaced polynomial lattice rules}, together with
higher order Galerkin discretization, to achieve an error bound of
\begin{equation} \label{eq:big2}
  \mbox{error} \,=\,
  \calO\left(s^{-2(1/p_0-1)} + N^{-1/p_0} + h^{t+t'} \right),
\end{equation}
for $0<p_0<1$, $0\le t,t'\le \bar{t}$, $f\in\cY_t'$ and $G\in\cX'_{t'}$.
The QMC convergence rate proved in \cite{DKLNS13} also gained an
additional factor of $N^{-1/2}$ as compared to the rate for
randomly shifted lattice rules in \cite{KSS12,ScMCQMC12}, thanks to a
new, non-Hilbert space setting for the QMC analysis (proposed already
in \cite{KSS11}). This approach is outlined in \S\ref{Sc:HiOrdQMC}.
The implied constant in \eqref{eq:big2} is again independent of $s$,
and this time it is achieved by choosing appropriate ``\emph{smoothness driven
product and order dependent $($SPOD$)$ weights}'' for the function space.
The generating vector for the required interlaced polynomial lattice rule
can again be constructed using a CBC algorithm, at a slightly higher cost
of $\calO(\alpha\,s\,N\log N + \alpha^2\,s^2 N)$ operations, with $\alpha
= \lfloor 1/p_0\rfloor + 1\ge 2$.

To reduce the computational cost required to achieve the same error, a
novel \emph{multi-level} algorithm was introduced and analyzed in
\cite{KSS13}. It takes the form
\begin{equation}\label{eq:QL*}
 Q^L_*(G(u)) \,:=\,
 \sum_{\ell=0}^L Q_{s_\ell,N_\ell}(G(u^{h_\ell}_{s_\ell} - u^{h_{\ell-1}}_{s_{\ell-1}}))\;,
\end{equation}
where each $Q_{s_\ell,N_\ell}$ is a randomly shifted lattice rule with
$N_\ell$ points in $s_\ell$ dimensions, and where
$u^{h_{-1}}_{s_{-1}}:=0$. The corresponding root-mean-square error bound
is
\begin{align} \label{eq:big3}
  &{\mbox{r.m.s.\ error} \,=\,} \nonumber \\
  &\calO\left(
  s_L^{-2(1/p_0-1)} + h_L^{t+t'}
  + \sum_{\ell=0}^L N_\ell^{-\min(1/p_1-1/2,1-\delta)} \left(
  s_{{\ell-1}}^{-(1/p_0 - 1/p_1)} + h_{{\ell-1}}^{t+t'} \right) \right),
  \quad\delta>0,
\end{align}
where $s_{-1}:=1$, $h_{-1}:=1$, $0<p_0 \le p_1 < 1$, $0\le t,t'\le 1$, and
the implied constant is independent of $s$, with appropriately chosen POD
weights. Assuming that the overall cost of \eqref{eq:QL*} is
$\calO(\sum_{\ell=0}^L s_\ell N_\ell h_\ell^{-d})$, an argument based on
the Lagrange multipliers was used to optimize the choice of $s_\ell$ and
$N_\ell$ in relation to $h_\ell\asymp 2^{-\ell}$. Note that the QMC
convergence rate with respect to $N_{\ell}$ in \eqref{eq:big3} depends on
$p_1$, rather than on $p_0$.

In this paper, we replace the randomly shifted lattice rules in
\eqref{eq:QL*} by interlaced polynomial lattice rules as in
\cite{DKLNS13}, to achieve the improved error bound
\begin{equation} \label{eq:big4}
 \mbox{error} \,=\,
  \calO\left(
  s_L^{-2(1/p_0-1)} + h_L^{t+t'} + \sum_{\ell=0}^L N_\ell^{-1/p_t}
  \left( s_{\ell-1}^{-(1/p_0-1/p_t)} + h_{\ell-1}^{t+t'} \right) \right),
\end{equation}
where $0<p_0 \le p_t < 1$, $0\le t,t'\le \bar{t}$. The implied constant is
independent of $s$, again, under the provision of appropriate SPOD
weights. Comparing \eqref{eq:big4} with \eqref{eq:big3}, we see that the
convergence rate is no longer capped at order one as expected, and there
is a gain of the additional factor $N_\ell^{-1/2}$ as in \eqref{eq:big2}.
However, the convergence rate depends now on the summability exponent
$p_t$ rather than $p_0$ or $p_1$.

As we argue in \S \ref{sec:CovOp} of
this paper,
in many examples, the 
exponent $p_t$ in \eqref{eq:psumpsi0} satisfies
\begin{equation} \label{eq:pt}
p_t =  \frac{p_0}{1- t p_0/d }, \quad 1\leq t \leq \bar{t}\;,
\end{equation}
which could be much larger than $p_0$. The requirement $p_t<1$ imposes a
constraint on $\bar{t}$, the maximum allowable value of $t$ and $t'$,
which in turn reduces the convergence rate in \eqref{eq:big4}. In some
scenarios the potential gain of the multi-level algorithm \eqref{eq:QL*}
over the single level algorithm \eqref{eq:qmcG} (whose error bound depends
only on $p_0$) can be limited.

The outline of this paper is as follows. In \S \ref{sec:pre}, we formulate
the affine parametric operator equations, specify all assumptions
which are subsequently needed in our QMC error analysis, and introduce an
abstract Petrov-Galerkin discretization of these operator equations which
covers most Galerkin discretizations of parabolic and elliptic partial
differential equations in a bounded spatial domain $D$. Examples include
second order, elliptic divergence form PDEs in polyhedral domains as
considered in \cite{NisSc13}. We elaborate on \eqref{eq:pt} resulting from
random field modelling with covariance operators chosen as negative powers
of second order, elliptic pseudo-differential operators in~$D$. We also
give in \S\ref{sec:pre} a synopsis of the key results of our single level
QMC Petrov-Galerkin error analysis in \cite{DKLNS13}, to the extent
required for the present work. In \S\ref{sec:MultAlg}, we
introduce the multi-level 
QMC Petrov-Galerkin approximation as direct generalization of the
multi-level algorithm based on (first order) randomly shifted
lattice rules analyzed in \cite{KSS13}.
We present the basic error bounds
for the combined QMC Petrov-Galerkin error, refining and extending
the analysis of \cite{DKLNS13}, and derive concrete selections of the
algorithm parameters based on optimization of the error bounds. The
proposed parameter choices are then used to derive asymptotic accuracy
versus work bounds for the proposed algorithms, subject to given data
regularity in terms of spatial differentiability as well as decay of the
covariance spectrum of the random field input.
Finally in \S\ref{sec:ConclGen} we give some concluding remarks.
\section{Problem formulation}
\label{sec:pre}
Generalizing results of \cite{CDS1}, we study well-posedness, regularity
and polynomial approximation of solutions for a family of abstract
parametric saddle point problems, with operators depending on a sequence
of parameters. The results cover a wide range of affine parametric
operator equations: among them are stationary and time-dependent diffusion
in random media \cite{CDS1}, wave propagation \cite{HoSc12Multi}, and
optimal control problems for uncertain systems \cite{KunothCS2011}.
%
\subsection{Affine parametric operator equations}
\label{ssec:affparops}
We denote by $\bcX$ and $\bcY$ two separable and reflexive Banach spaces
over $\mathbb{R}$ (all results will hold with the obvious modifications
also for spaces over $\mathbb{C}$) with (topological) duals $\bcX'$ and
$\bcY'$, respectively. By $\cL(\bcX,\bcY')$, we denote the set of bounded
linear operators $A:\bcX \to\bcY'$.

%
%
%
%
A particular instance of \eqref{eq:main} and \eqref{eq:Baffine} are boundary
value problems of second order, elliptic (systems of) partial differential
equations such as linear elasticity in anisotropic, parametric medium.
Here, $\bcX = \bcY = H^1_0(D)^\iota$ with $\iota\ge 1$, and
$A(\bsy)$ is given by 
the divergence-form elliptic differential operator which acts on vector
functions $u(\bsy) : D\mapsto \bbR^\iota$ via
\begin{equation}\label{eq:PDE2}
  (A(\bsy)u(\bsy))_l
  \,=\, - \sum_{i,j=1}^d \sum_{k=1}^{\iota}
  \partial_i (a^{ij}_{kl}(\bsx,\bsy) \partial_j u_k(\bsx,\bsy))
  \,=\,  f_l
 \quad\mbox{in}\quad D,\; l=1,...,\iota,
\end{equation}
and $u(\bsy)|_{\partial D} = 0$.
%
In the scalar, isotropic case of \eqref{eq:PDE2} which was considered in
\cite{KSS12}, we have $\iota = 1$ and the coefficient function
$a^{ij}(\bsy) = \delta_{ij} a(\bsy)$ with $a(\bsy)$ as in \eqref{eq:PDE1}.
%
%
For linearized elasticity, $\iota = d$ in \eqref{eq:PDE2}. Other boundary
conditions in \eqref{eq:PDE2} could equally well be considered (we refer
to \cite[Sec.1.2]{NisSc13} for details).

As we explained in the introduction, let $\bsy := (y_j)_{j \geq 1} \in U =
[-\frac{1}{2},\frac{1}{2}]^\bbN$ be a countable set of parameters.
For every $f\in \bcY'$ and for every $\bsy\in U$, we solve the parametric
operator equation \eqref{eq:main}, where the operator
$A(\bsy)\in\cL(\bcX,\bcY')$ is of affine parameter dependence, see
\eqref{eq:Baffine}. We associate with the operators $A_j$ the
parametric bilinear
forms $\fa_j(\cdot,\cdot):\bcX\times \bcY \rightarrow \mathbb{R}$ via
$$
  \forall v\in \cX,\;w\in \cY:\quad
  \fa_j(v,w) \,=\, {_{\cY'}}\langle  A_j v, w \rangle_{\cY}\;,
  \quad j=0,1,2,\ldots
  \;.
$$
Similarly, for $\bsy\in U$ we associate with the
parametric operator $A(\bsy)$ the
parametric bilinear form $\fa(\bsy;\cdot,\cdot): \bcX\times\bcY\to\R$ via
\[ 
  \forall v\in \cX,\;w\in \cY:\quad
  \fa(\bsy;v,w) \,=\, {_{\cY'}}\langle A(\bsy) v, w\rangle_{\cY}\;.
\] 

In order for the sum in \eqref{eq:Baffine} to converge, we impose the
assumptions below on the sequence $\{A_j\}_{j\geq 0}\subset
\cL(\cX,\cY')$.

\begin{assumption}\label{ass:AssBj}
The sequence $\{ A_j \}_{j\geq 0}\subset \cL(\bcX,\bcY')$ in
\eqref{eq:Baffine} satisfies: 
\begin{enumerate}
\item%
$A_0\in \cL(\bcX,\bcY')$ is boundedly invertible, i.e., there exists a
constant $\mu_0 > 0$ such that
\begin{equation}\label{eq:B0infsup} 
 \inf_{0\ne v \in \bcX} \sup_{0\ne w \in \bcY}
 \frac{\fa_0(v,w)}{\| v \|_{\bcX} \|w\|_{\bcY}}
 \ge \mu_0\;,\quad
 \inf_{0\ne w \in \bcY} \sup_{0\ne v \in \bcX}
 \frac{\fa_0(v,w)}{\| v \|_{\bcX} \|w\|_{\bcY}}
 \ge \mu_0
 \;.
\end{equation}
\item%
The \emph{fluctuation operators} $\{ A_j \}_{j\geq 1}$ are small with
respect to $A_0$ in the following sense:
there exists a constant $0 < \kappa < 2$ such that
\begin{equation} \label{eq:Bjsmall} 
 \sum_{j\geq 1} \beta_{0,j} \leq \kappa < 2 \;,
 \quad\mbox{where}\quad
 \beta_{0,j} \,:=\, \| A_0^{-1} A_j \|_{\cL(\cX,\cX)}\;,
 \quad j=1,2,\ldots
 \;.
\end{equation}
\end{enumerate}
\end{assumption}
%
%
\begin{theorem}[{cp.~\cite[Theorem 2]{ScMCQMC12}}]
\label{thm:BsigmaInv} Under Assumption~\ref{ass:AssBj}, for every
realization $\bsy\in \cS$ of the parameter vector, the affine parametric
operator $A(\bsy)$ given by \eqref{eq:Baffine} is boundedly invertible,
uniformly with respect to $\bsy$.
In particular, for every $f \in \bcY'$ and for every $\bsy \in \cS$, the
parametric operator equation
\begin{equation} \label{eq:parmOpEq}
 \mbox{find} \quad u(\bsy) \in \bcX:\quad
 \fa(\bsy;u(\bsy), w) \,=\,  {_{\bcY'}}\langle  f,w \rangle_{\bcY}
 \quad
 \forall w \in \bcY
\end{equation}
admits a unique solution $u(\bsy)$ which satisfies
the a-priori estimate
\[ 
 \| u(\bsy) \|_{\bcX}
 \,\leq\,
 \frac{1}{\mu}
 \, \| f \|_{\bcY'}\;, \quad
  \mbox{with}\quad \mu = (1 - \kappa/2)\,\mu_0
\;.
\] 
\end{theorem}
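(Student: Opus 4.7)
The plan is a direct Neumann series perturbation argument around the nominal operator $A_0$. First I would extract from the two inf-sup conditions in \eqref{eq:B0infsup} that $A_0\in\cL(\bcX,\bcY')$ is a bijection with bounded inverse and $\|A_0^{-1}\|_{\cL(\bcY',\bcX)}\le 1/\mu_0$. This is the standard Banach–Ne\v cas–Babu\v ska characterization: the first inf-sup gives injectivity and a lower bound on $A_0$, while the second (the ``transposed'' inf-sup) gives density of the range, whose closedness then follows from the first, yielding surjectivity.

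Next I would factor the parametric operator as
\[
  A(\bsy) \,=\, A_0\bigl(I_{\bcX} + T(\bsy)\bigr),\qquad
  T(\bsy) \,:=\, \sum_{j\ge 1} y_j\, A_0^{-1} A_j \in \cL(\bcX,\bcX),
\]
and verify that the series defining $T(\bsy)$ converges absolutely in $\cL(\bcX,\bcX)$. Using $\bsy\in U$, so $|y_j|\le \tfrac12$, together with \eqref{eq:Bjsmall},
\[
  \|T(\bsy)\|_{\cL(\bcX,\bcX)}
  \,\le\, \sum_{j\ge 1}|y_j|\,\beta_{0,j}
  \,\le\, \tfrac12\sum_{j\ge 1}\beta_{0,j}
  \,\le\, \tfrac{\kappa}{2} \,<\, 1,
\]
uniformly in $\bsy\in U$. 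Hence the Neumann series for $(I_{\bcX}+T(\bsy))^{-1}$ converges and
\[
  \bigl\|(I_{\bcX}+T(\bsy))^{-1}\bigr\|_{\cL(\bcX,\bcX)}
  \,\le\, \frac{1}{1-\kappa/2}.
\]

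Combining the two factorizations gives $A(\bsy)^{-1}=(I_{\bcX}+T(\bsy))^{-1}A_0^{-1}\in\cL(\bcY',\bcX)$ with operator norm bounded by $1/((1-\kappa/2)\mu_0)=1/\mu$, uniformly in $\bsy\in U$. Since the bilinear form $\fa(\bsy;\cdot,\cdot)$ is, by definition, the duality realization of $A(\bsy)$, this bounded invertibility is exactly the unique solvability of \eqref{eq:parmOpEq}, and the identity $u(\bsy)=A(\bsy)^{-1}f$ yields the a-priori bound $\|u(\bsy)\|_{\bcX}\le \|f\|_{\bcY'}/\mu$.

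I do not anticipate a substantive obstacle; the only care needed is (i) making sure the perturbation is measured in $\cL(\bcX,\bcX)$ rather than $\cL(\bcX,\bcY')$ so that the composition with $A_0^{-1}$ is legitimate and the Neumann series lives in the right algebra, and (ii) justifying that the two inf-sup inequalities in \eqref{eq:B0infsup} together deliver bijectivity of $A_0$, not merely injectivity with closed range. Both are standard, and the rest reduces to the quantitative Neumann estimate above.
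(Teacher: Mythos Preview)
Your argument is correct and is exactly the Neumann series perturbation around $A_0$ that the paper has in mind: although the paper does not spell out a proof here (it defers to the cited reference), it explicitly invokes the same factorization $A(\bsy)=A_0\bigl(I+\sum_{j\ge1}y_jA_0^{-1}A_j\bigr)$ and Neumann series reasoning in the discussion following Assumption~\ref{ass:XtYt}. Nothing further is needed.
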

%
%
\subsection{Parametric and spatial regularity of solutions}
\label{ssec:anadepsol}
First we establish the regularity of the solution $u(\bsy)$ of the
parametric, variational problem \eqref{eq:parmOpEq} with respect to the
parameter vector $\bsy$. In the following, let $\N_0^\N$ denote the set of
sequences $\bsnu = (\nu_j)_{j\geq 1}$ of non-negative integers $\nu_j$, and
let $|\bsnu| := \sum_{j\geq 1} \nu_j$. For $|\bsnu|<\infty$, we denote the
partial derivative of order $\bsnu$ of $u(\bsy)$ with respect to $\bsy$ by
\[
  \partial^\bsnu_\bsy u(\bsy) \,:=\,
  \frac{\partial^{|\bsnu|}}{\partial^{\nu_1}_{y_1}\partial^{\nu_2}_{y_2}\cdots}u(\bsy)
  \;.
\]

\begin{theorem}[cp.~\cite{CDS1,KunothCS2011}]\label{thm:Dsibound}
Under Assumption~\ref{ass:AssBj}, there exists a constant $C_0 > 0$ such
that for every $f\in \bcY'$ and for every $\bsy\in \cS$, the partial
derivatives of the parametric solution $u(\bsy)$ of the parametric
operator equation \eqref{eq:main} with affine parametric, linear operator
\eqref{eq:Baffine} satisfy the bounds
\begin{equation} \label{eq:Dsibound}
\|\partial^\bsnu_\bsy u(\bsy)\|_\bcX
\,\le\,
C_0\, |\bsnu|! \,\bsbeta_0^\bsnu\, \| f\|_{\bcY'}
\quad \mbox{for all } \bsnu \in \N_0^\N \mbox{ with } |\bsnu|<\infty
\;,
\end{equation}
where $0! :=1$, $\bsbeta_0^\bsnu := \prod_{j\ge 1} \beta_{0,j}^{\nu_j}$,
with $\beta_{0,j}$ as in \eqref{eq:Bjsmall}, and $|\bsnu| = \sum_{j \ge 1}
\nu_j$.
\end{theorem}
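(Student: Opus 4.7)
The plan is to derive a recursion for $\partial_\bsy^\bsnu u(\bsy)$ by implicit differentiation of the parametric equation \eqref{eq:main} and then to close it by induction on $n = |\bsnu|$.

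First, I apply the general Leibniz rule to $A(\bsy) u(\bsy) = f$. Because the map $\bsy \mapsto A(\bsy)$ is affine by \eqref{eq:Baffine}, the only nonvanishing partial derivatives of $A$ in $\bsy$ are $\partial^\bszero A = A(\bsy)$ and $\partial^{\bse_j} A = A_j$; since $f$ does not depend on $\bsy$, for every $\bsnu$ with $|\bsnu| \ge 1$ this collapses to
\[
A(\bsy)\,\partial^\bsnu u(\bsy) \;=\; -\sum_{\{j\,:\,\nu_j \ge 1\}} \nu_j \, A_j\,\partial^{\bsnu-\bse_j} u(\bsy),
\]
where I used $\binom{\bsnu}{\bse_j} = \nu_j$.

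Second, I precompose with $A_0^{-1}$ to convert this into an $\bcX$-valued fixed-point identity:
\[
(I + E(\bsy))\,\partial^\bsnu u(\bsy) \;=\; -\sum_{j\ge 1} \nu_j \,(A_0^{-1} A_j)\,\partial^{\bsnu-\bse_j} u(\bsy),
\qquad
E(\bsy) \,:=\, \sum_{k\ge 1} y_k\, A_0^{-1} A_k.
\]
Because $\bsy \in \cS$ implies $|y_k|\le \tfrac12$, Assumption~\ref{ass:AssBj}(ii) gives $\|E(\bsy)\|_{\cL(\bcX,\bcX)} \le \tfrac12 \sum_{k\ge 1} \beta_{0,k} \le \kappa/2 < 1$, and a Neumann series bounds $\|(I + E(\bsy))^{-1}\|_{\cL(\bcX,\bcX)} \le (1 - \kappa/2)^{-1}$ uniformly in $\bsy$. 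Taking $\bcX$-norms yields the pointwise recursion
\[
\|\partial^\bsnu u(\bsy)\|_\bcX \;\le\; \frac{1}{1-\kappa/2}\sum_{\{j\,:\,\nu_j \ge 1\}} \nu_j\,\beta_{0,j}\,\|\partial^{\bsnu - \bse_j} u(\bsy)\|_\bcX.
\]

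Third, I induct on $n = |\bsnu|$. The base case $n = 0$ is exactly the a-priori estimate of Theorem~\ref{thm:BsigmaInv}. In the inductive step, the combinatorial identity $\sum_{\{j\,:\,\nu_j \ge 1\}} \nu_j \, \beta_{0,j} \, \bsbeta_0^{\bsnu-\bse_j} = |\bsnu| \, \bsbeta_0^\bsnu$, combined with $|\bsnu| \cdot (|\bsnu|-1)! = |\bsnu|!$, telescopes the recursion into the claimed form \eqref{eq:Dsibound}.

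The main obstacle is the factor $(1-\kappa/2)^{-1}$ picked up by the Neumann inversion at each differentiation step: naively iterated, it would produce a $(1-\kappa/2)^{-|\bsnu|}$ that prevents the induction from closing with a $\bsnu$-independent constant. The standard remedy (as in CDS) is to run the induction with the effective weights $\tilde\beta_j := \beta_{0,j}/(1-\kappa/2)$, so that the geometric factor is absorbed once per coordinate into the weights and the induction closes cleanly with $C_0 = C_0(\mu_0,\kappa)$ depending only on the base problem. Because this rescaling preserves the summability condition \eqref{eq:psumpsi0} up to a bounded multiplicative constant, no subsequent use of \eqref{eq:Dsibound} is affected, and the bound in terms of $\bsbeta_0^\bsnu$ follows up to this harmless rescaling of the constant.
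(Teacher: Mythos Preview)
The paper does not supply its own proof of this theorem; it is quoted (with ``cp.'') from \cite{CDS1,KunothCS2011}. Your derivation of the recursion by implicit differentiation is exactly the standard argument from those references, and it is also the mechanism the paper itself uses in the proofs of Theorems~\ref{thm:main1} and~\ref{thm:main2} (via Lemma~\ref{lem:recur}). So the approach is the right one.

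There is, however, a genuine gap in your final paragraph. Running the induction with $\tilde\beta_j := \beta_{0,j}/(1-\kappa/2)$ does close the recursion, but it yields
\[
\|\partial^\bsnu_\bsy u(\bsy)\|_\bcX \,\le\, C_0\,|\bsnu|!\,\tilde\bsbeta^{\,\bsnu}\,\|f\|_{\bcY'}
\,=\, C_0\,(1-\kappa/2)^{-|\bsnu|}\,|\bsnu|!\,\bsbeta_0^\bsnu\,\|f\|_{\bcY'}\;,
\]
and the factor $(1-\kappa/2)^{-|\bsnu|}$ is \emph{not} a ``harmless rescaling of the constant'': it depends on $|\bsnu|$ and cannot be absorbed into a single $\bsnu$-independent $C_0$. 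A one-parameter check with $A(y)=A_0(I+yB)$, $B$ a scalar multiple of the identity, gives $\partial_y^n u(y)=(-1)^n n!\,B^n(I+yB)^{-(n+1)}A_0^{-1}f$, so at $y=-\tfrac12$ the factor $(1-\kappa/2)^{-(n+1)}$ is sharp. Thus \eqref{eq:Dsibound} with the weights $\beta_{0,j}$ literally as defined in \eqref{eq:Bjsmall} and a $\bsnu$-independent $C_0$ is not obtainable by this route.

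What your argument actually proves is \eqref{eq:Dsibound} with $\bsbeta_0$ replaced by the rescaled sequence $\tilde\bsbeta$. You are right that this is harmless for every downstream use in the paper: $\tilde\bsbeta\in\ell^{p_0}(\bbN)$ is equivalent to $\bsbeta_0\in\ell^{p_0}(\bbN)$, and the paper's later proofs already pass to enlarged weight sequences such as $\bar\beta_{0,j}=\max(\beta_{0,j},\|A_j\|_{\cL(\cX,\cY')}/\bar\mu)$ in Theorem~\ref{thm:main2} for exactly this reason. But you should state the conclusion you have actually established (the bound with $\tilde\bsbeta^{\,\bsnu}$), rather than asserting that the geometric factor disappears into $C_0$.
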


\medskip
%
For the spatial regularity, we assume given {\em scales of smoothness
spaces} $\{ \cX_t \}_{t \geq 0}$, $\{ \cY_t \}_{t\geq 0}$, with
\begin{equation}\label{eq:SmScal}
\begin{aligned}
 \cX &= \cX_0 \supset \cX_1 \supset \cX_2 \supset \cdots\;,
 &\cY &= \cY_0 \supset \cY_1 \supset \cY_2 \supset \cdots\;,
 \quad\mbox{and}
 \\
 \cX' &= \cX'_0 \supset \cX'_1 \supset \cX'_2 \supset \cdots\;,
 &\cY' &= \cY'_0 \supset \cY'_1 \supset \cY'_2 \supset \cdots
 \;.
\end{aligned}
\end{equation}
The scales 
are assumed to be defined also for non-integer values of the smoothness
parameter $t\geq 0$ by interpolation. For self-adjoint operators, usually
$\cX_t  = \cY_t$. For example, in diffusion problems in {\em convex
domains} $D$ considered in \cite{CDS1,KSS12}, the smoothness scales
\eqref{eq:SmScal} are $\cX = \cY = H^1_0(D)$, $\cX_1 = \cY_1 = (H^2\cap
H^1_0)(D)$, $\cY' = H^{-1}(D)$, $\cY'_1 = L^2(D)$. In a non-convex polygon
(or polyhedron), analogous smoothness scales are available, but involve
Sobolev spaces with weights.
%
In \cite{NisSc13}, this kind of abstract regularity result was established
for a wide range of second order parametric, elliptic systems in 2D and
3D, also for higher order regularity. The smoothness scales $\{ \bcX_t
\}_{t\geq 0}$ and $\{ \bcY'_t\}_{t\geq 0}$ are then weighted Sobolev
spaces $\cK^{t+1}_{a+1}(D)$ of Kondratiev type in $D$, and $\cX_t =
\cK^{t+1}_{a+1}(D)$, $\cY'_t = \cK^{t-1}_{a-1}(D)$ in this case. The
Finite Element spaces which realize the maximal convergence rates (beyond
order one) are regular, simplicial families in the sense of Ciarlet, on
suitably refined meshes which compensate for the corner and edge
singularities.

The maximum amount of smoothness in the scale $\cX_t$, denoted by
$\bar{t}\ge 0$, depends on the problem class under consideration and
on the Sobolev scale: e.g., for elliptic problems in polygonal domains, it
is well known that choosing for $\cX_t$ the usual Sobolev spaces will
allow \eqref{eq:Regul} with $t$ only in a possibly small interval $0< t
\leq \bar{t}$, whereas choosing $\cX_t$ as Sobolev spaces with weights
will allow rather large values of $\bar{t}$ (see, e.g., \cite{NisSc13}).

We next formalize the parametric regularity hypothesis.
\begin{assumption}\label{ass:XtYt}
There exists $\bar{t}\ge 0$ such that the following conditions hold:
\begin{enumerate}
\item For every $t,t'$ satisfying $0\le t,t'  \le \bar{t}$,
we have
\begin{equation}\label{eq:Regul}
 \sup_{\bsy\in \cS} \| A(\bsy)^{-1} \|_{\cL(\cY'_t, \cX_t)} < \infty
 \quad\mbox{and}\quad
 \sup_{\bsy\in \cS} \| (A^*(\bsy))^{-1} \|_{\cL(\cX'_{t'}, \cY_{t'})}  < \infty\;.
\end{equation}
%
Moreover, for every $t$ satisfying $0\le t\le\bar{t}$, there
exist summability exponents $0\le p_0 \le p_t \le p_{\bar{t}}<1$ such
that
\begin{equation} \tag{\ref{eq:psumpsi0}} 
 \sum_{j \ge 1} \| A_j \|^{p_t}_{\cL(\cX_t,\cY'_t)} < \infty
\;.
\end{equation}
%
%
\item
    Let $\bsu(\bsy) = (A(\bsy))^{-1}f$ and
    $w(\bsy) = (A^*(\bsy))^{-1}G$.
    For $0\le t,t'\le\bar{t}$,
    there exist constants $C_t,C_{t'}>0$
    such that for every  $f\in \cY'_t$ and $G\in\cX'_{t'}$ holds
\begin{equation}\label{ass:a priory}
  \sup_{\bsy\in U} \|u(\bsy)\|_{\cX_t} \le C_t \|f\|_{\cY'_t}
  \quad\mbox{and}\quad
 \sup_{\bsy\in U} \|w(\bsy)\|_{\cY_{t'}} \le C_{t'} \|G\|_{\cX'_{t'}}\;.
\end{equation}
Moreover, for every $t$ satisfying $0\le t\le\bar{t}$, there
exists a sequence $\bsbeta_t = (\beta_{t,j})_{j\geq 1} \in
\ell^{p_t}(\N)$, i.e., satisfying
\begin{equation} \label{eq:AssBjp}
 \sum_{j\ge 1} \beta_{t,j}^{p_t} \,<\,\infty\;,
\end{equation}
such that for
every $0\leq t,t' \leq \bar{t}$ and for every
$\bsnu\in\bbN_0^\bbN$ with $|\bsnu|<\infty$
we have
\begin{align}\label{ass:diffy}
 \sup_{\bsy \in U} \|\partial^{\bsnu}_{\bsy} u (\bsy) \|_{\cX_t}
 &\,\le\, C_t\, |\bsnu|!\, \bsbeta_t^{\bsnu}\, \|f\|_{\cY_t'},
 \\
 \label{eq:refAd}
 \sup_{\bsy\in U} \| \partial^{\bsnu}_{\bsy} w(\bsy)\|_{\bcY_{t'}}
 &\,\le\, C_{t'}\, |\bsnu|!\, \bsbeta_{t'}^\bsnu\, \| G \|_{\bcX'_{t'}}
\;.
\end{align}
\item The operators $A_j$ are enumerated so that the sequence
    $\bsbeta_0$ in \eqref{eq:Bjsmall} satisfies
\begin{equation} \label{eq:ordered}
  \beta_{0,1} \ge \beta_{0,2} \ge \cdots \ge \beta_{0,j} \ge \, \cdots\;.
\end{equation}
\end{enumerate}
\end{assumption}

Parametric regularity as in Item~2 of Assumption \ref{ass:XtYt} is
available for numerous parametric differential equations (see
\cite{SchwabGittelsonActNum11,HoaSc12Wave,HaSc11,KunothCS2011} and the
references there) as well as for posterior densities in Bayesian inverse
problems with uniform priors (see, e.g., \cite{SS12,SS13} and the
references there). Writing $A(\bsy) = A_0(I + \sum_{j\geq 1}y_j
A_0^{-1}A_j)$, a Neumann series argument shows that a sufficient condition
for \eqref{eq:Regul} to hold is $A_0^{-1}\in \cL(\bcY'_t,\bcX_t)$, $A_j
\in \cL(\bcX_t,\bcY'_t)$ and that
\[
  \sum_{j\ge1} \| A_0^{-1}A_j \|_{\cL(\bcX_t,\bcX_t)} < 2
\;.
\]
We may estimate
\[ 
\| A_0^{-1} A_j \|_{\cL(\bcX_t,\bcX_t)}
\leq
\| A_0^{-1} \|_{\cL(\bcY'_t, \bcX_t)} \| A_j \|_{\cL(\bcX_t,\bcY'_t)}\;,
\;\; j=1,2,3,\cdots\;,
\] 
and since $A_j = A_0 A_0^{-1} A_j$ we have
$ \|A_j\|_{\cL(\cX_t,\cY'_t)}
  \le \| A_0 \|_{\cL(\cX_t,\cY_t')}
  \|A_0^{-1} A_j\|_{\cL(\bcX_t,\bcX_t)}$.
Combining these two estimates, we have for every $j\geq 1$
\begin{equation}\label{eq:Aknorm}
  \| A_0 \|_{\cL(\cX_t,\cY_t')}^{-1}
  \le \frac{\| A_0^{-1} A_j \|_{\cL(\bcX_t,\bcX_t)}}{ \| A_j \|_{\cL(\bcX_t,\bcY'_t)} }
  \le \| A_0^{-1} \|_{\cL(\bcY'_t, \bcX_t)}\;.
\end{equation}
This shows that condition \eqref{eq:psumpsi0} is equivalent to (but not
identical to) the condition that $\sum_{j \ge 1} \| A_0^{-1} A_j
\|^{p_t}_{\cL(\cX_t,\cX_t)} < \infty$.

\subsection{Illustration of Assumption~\ref{ass:XtYt}}
\label{sec:CovOp}
The condition \eqref{eq:B0infsup} of Assumption~\ref{ass:AssBj}
implies $A_0^{-1}\in \cL(\cY',\cX)$ so that for every $\bsy \in U$ we have
$A(\bsy)\,u(\bsy) = f \Longleftrightarrow B(\bsy)\,u(\bsy) = \tilde{f}$,
where $B(\bsy) := I + \sum_{j\geq 1}y_j (A_0^{-1}A_j)$ and $\tilde{f}:=
A_0^{-1} f$. Taking $\bsy = \bszero$ in \eqref{eq:Regul} yields $A_0^{-1}
\in \cL(\cY'_t,\cX_t)$, while \eqref{eq:psumpsi0} and \eqref{eq:Aknorm}
together gives $A_0^{-1}A_j \in \cL(\cX_t,\cX_t)$ for $j=1,2,\ldots$.
Hence \eqref{eq:AssBjp} holds with $\beta_{t,j} := \| A_0^{-1}A_j
\|_{\cL(\cX_t,\cX_t)}$. We may now apply the argument in \cite{CDS1} to
the affine parametric operator equation $B(\bsy)u(\bsy)=\tilde{f}$ to
obtain \eqref{ass:diffy}. Repeating this argument for the adjoint equation
$B(\bsy)^*w(\bsy) = \tilde{G}:=A_0^{-*}G \in \cX_t$ then yields
\eqref{eq:refAd}.

%

The summability \eqref{eq:AssBjp} is well known to be related to the
smoothness of the covariance kernels of the random coefficient; see e.g.,
\cite[Appendix]{ST06} for details. We illustrate \eqref{eq:AssBjp} in the
context of the scalar, parametric diffusion problem \eqref{eq:PDE1}. One
source of the $\psi_j$ in \eqref{eq:PDE1} are principal component analysis
expansions such as {\KL} expansions of random coefficients, and therefore
\eqref{eq:AssBjp} is a sparsity assumption on the coefficient function
sequence $\{\psi_j\}_{j\geq 1}$ and their derivatives of orders
$t=1,2,\ldots, \lfloor\bar{t}\rfloor$.

%
Consider the Dirichlet Laplacean $-\Delta_d$ in the unit cube $D=(0,1)^d$
with $d\ge 1$. This is an unbounded, self-adjoint operator on $L^2(D)$
with a discrete spectrum consisting of countably many real eigenvalues
which accumulate only at infinity. It is elementary to verify by
separation of variables that the eigenpairs of $-\Delta_d$ are
$$
 -\Delta_d\,\tilde{\psi}_{\bsk}
 \,=\,
 \lambda_{\bsk}\, \tilde{\psi}_{\bsk} \quad\mbox{in}\quad D,
 \quad
 \tilde{\psi}_{\bsk}|_{\partial D} = 0\;,
 \quad \bsk = (k_1,\ldots,k_d) \in \bbN^d
 \;,
$$
with
\begin{equation}\label{eq:sinuseig}
 \lambda_{\bsk} \,=\, \pi^2 (k_1^2+\cdots+k_d^2),\;\;
 \tilde{\psi}_{\bsk}(\bsx) \,=\, \prod_{i=1}^d \sin(\pi k_i x_i)
\;.
\end{equation}
Enumerating $\{\lambda_{\bsk}\}_{\bsk\in \bbN^d}$ in non-decreasing order
$\{\lambda_j\}_{j\ge 1}$, there hold the {\em Weyl asymptotics} (see,
e.g., \cite{Shubin} and the references there)
\begin{equation}\label{eq:Weyl}
\lambda_j \sim j^{2/d} \quad\mbox{as}\quad j\to\infty \;.
\end{equation}

Next, we consider again the domain $D$, but now for some real parameter
$\theta > 0$ the {\em Covariance operator} $\cC_\theta =
(-\Delta_d)^{-\theta}$. Then, for any $\theta > 0$, $\cC_\theta \in
\cL(L^2(D),L^2(D))$ is a compact, self-adjoint operator whose spectrum
$\sigma(\cC_\theta) = (\mu_j)_{j\geq 1}$ consists of countably many, real
eigenvalues which we enumerate again in non-increasing order. By the
spectral mapping theorem and the Weyl asymptotics \eqref{eq:Weyl}, the
operators $\cC_\theta$ have the same eigenfunctions $\tilde{\psi}_j$ as
the operator $-\Delta_d$, and the corresponding eigenvalues $\mu_j$ of
$\cC_\theta$ have the asymptotics
\[ 
\mu_j \sim j^{-2\theta/d} \quad\mbox{as} \quad j\to \infty \;.
\] 
In {\KL} expansions with uncertain coefficients, we have \eqref{eq:PDE1}
with $\psi_j:=\sqrt{\mu_j}\, \tilde{\psi}_j$. Clearly in this case we have
$\|\tilde\psi_j\|_{L^\infty(D)}\le 1$ for $j\ge 1$, which yields $\|\psi_j
\|_{L^\infty(D)} \lesssim j^{-\theta/d}$, from which we conclude that
\[
  \sum_{j\geq 1} \|  \psi_j \|_{L^\infty(D)}^{p_0} \,<\,\infty
  \quad\mbox{with}\quad
  p_0 \,>\, \frac{d}{\theta}\;.
\]
We find for $t=0,1,2,...$ and for every $j\in \bbN$ that $\| \tilde\psi_j
\|_{W^{t,\infty}(D)} \lesssim j^{{t/d}}$,
and therefore
\begin{equation}\label{eq:Wtinftybd}
\| \psi_j \|_{W^{t,\infty}(D)} \lesssim j^{{(t-\theta)/d}}\;,
\end{equation}
with the implied constant depending on $t$, but independent of $j\in
\bbN$. So it holds
\[ 
 \sum_{j\geq 1} \|  \psi_j \|_{W^{t,\infty}(D)}^{p_t} <\infty\;,
 \quad\mbox{with}\quad
 p_t \,:=\, \frac{p_0}{1-tp_0/d} \,<\, \frac{d}{\theta- t } \;.
\] 
The requirement that $p_t < 1$ means
\[ 
 \bar{t} =
d \left( \frac{1}{p_0}-1 \right)
<
\theta - d 
\;.
\] 
Thus sparsity of expansions of higher order $t$ is only available for
sufficiently large $\theta > 0$, at least in this example where
\eqref{eq:Wtinftybd} is sharp.

The preceding arguments rely strongly on the explicit formulas
\eqref{eq:sinuseig}. For covariance operators of the form
$\cC = B^{-\theta}$ for a general, positive and second order, self-adjoint
elliptic divergence form partial differential operator $B\in
\cL(\bcX,\bcX')$ with non-constant, H\"older regular coefficients in a
polygonal/polyhedral domain $D$, the spectral asymptotics of the
$\lambda_j$ as $j\to \infty$ is well known to hold as well (see e.g.\
\cite[Theorem~15.2]{Shubin} for smooth domains and smooth coefficients,
and \cite{Miyazaki09} for elliptic, divergence-form operators with
non-smooth coefficients). Importantly, also in this case, the
eigenfunctions $\tilde{\psi}_j$ are bounded, but may exhibit singularities
at corners and edges of the domain $D$, so that they belong only to {\em
weighted $W^{t,\infty}(D)$ spaces} denoted in \cite{NisSc13} by
$\calW^{t,\infty}(D)$; coefficients in such spaces for
\eqref{eq:PDE2} are admissible in the results of \cite{NisSc13}, cp.\
\cite[Eq. (2.3)]{NisSc13}, where also conditions \eqref{ass:diffy} and
\eqref{eq:refAd} have been verified for parametric, elliptic systems
\eqref{eq:PDE2}. In the context of the parametric, second-order, elliptic
divergence-form PDE \eqref{eq:PDE2}, we have
$\| A_j \|_{\cL(\bcX_t,\bcY_t')} \lesssim \| \psi_j
\|_{\calW^{j,\infty}(D)} $ (cp.\ \cite[Eq.\ (2.6)]{NisSc13}) and $\|
A_0^{-1} \|_{\cL(\bcY'_t, \bcX_t)}$ being bounded in a scale of weighted
Sobolev spaces (cp.\ \cite[Corollary 2.1]{NisSc13} with the identification
$\bcX_t = {\cal K}^{1+t}_{a+1}(D)$), with $\lesssim$ denoting an absolute
constant (depending on $t$, but not on $j$); 
we refer to \cite[Eq.(2.6)]{NisSc13} for details.
%
%
\subsection{Petrov-Galerkin discretization}
\label{ssec:GalDisc}
Since the exact solution is not available explicitly, we will have to
compute, for given $\bsy\in \cS$, an approximate solution obtained by {\em
Petrov-Galerkin discretization}.

\begin{theorem}[{cp.~\cite[\S2.4]{DKLNS13}}] \label{thm:Galerkin}
Let $\{ \cX^h \}_{h>0}\subset\cX$ and $\{ \cY^h \}_{h>0}\subset\cY$ be two
families of finite dimensional subspaces which are dense in $\cX$ and in
$\cY$, respectively. Assume moreover the approximation property and that
the Petrov-Galerkin subspace pairs $\cX^h\times \cY^h$ are inf-sup stable
with respect to the nominal bilinear form $\fa_0(\cdot,\cdot)$, as in
\eqref{eq:B0infsup}, with constant $\bar{\mu}_0>0$ independent of $h$.
This implies the discrete inf-sup conditions for the bilinear form
$\fa(\bsy;\cdot,\cdot)$, uniformly with respect to $\bsy\in U$, with
constant $\bar{\mu} \,=\, (1-\kappa/2)\, \bar{\mu}_0>0$.

Then for every $\bsy\in U$ we have existence, uniqueness and
\textnormal{(}uniform with respect to $\bsy$\textnormal{)}
quasioptimality of the Petrov-Galerkin solutions,
ie., 
for every $0<h \leq h_0$ and for every $\bsy \in \cS$,
the Petrov-Galerkin approximations $u^h(\bsy)\in\cX^h$, given by
\begin{equation} \label{eq:parmOpEqh}
\mbox{find} \; u^h(\bsy) \in \bcX^h :
\quad
\fa(\bsy;u^h(\bsy),w^h) = 
{_{\cY'}} \langle f, w^h \rangle_{\cY}
\quad
\forall w^h\in \bcY^h\;,
\end{equation}
are well defined, and stable, i.e.,
they satisfy the uniform a-priori estimate
\begin{equation}\label{eq:FEstab}
 \| u^h(\bsy) \|_{\bcX} \,\le\, \frac{1}{\bar{\mu}}\, \| f \|_{\bcY'}
\;.
\end{equation}
Moreover, for $0<t\le \bar{t}$, if the basis functions have smoothness
degree $\lceil t\rceil$ then there exists a constant $C_t>0$ such that
for every $\bsy\in U$
\begin{equation} \label{eq:reg1}
 \| u(\bsy) - u^h(\bsy) \|_{\cX} \,\le\, C_t\, h^t\, \| u(\bsy)\|_{\cX_t}
\;.
\end{equation}

Additionally, we assume uniform inf-sup stability of the pairs
$\bcX^h\times \bcY^h$ for the adjoint problem, so that for $0<t'\le
\bar{t}$ there exists a constant $C_{t'}>0$ such that for all $0<h\le h_0$
and $\bsy\in U$,
\begin{equation} \label{eq:reg2}
 \| w(\bsy) - w^h(\bsy) \|_{\cY}
 \,\le\, C_{t'} h^{t'} \| w(\bsy)\|_{\cY_{t'}}
\;.
\end{equation}
Then, for every $f\in \cY'_{t}$ and $G\in \cX'_{t'}$ with $0<t,t'\leq
\bar{t}$ and for every $\bsy\in \cS$, as $h\to 0$, there exists a constant
$C>0$ independent of $h>0$ and of $\bsy\in U$ such that the Galerkin
approximations $G(u^h(\bsy))$ satisfy
\begin{align} \label{eq:Gconvest}
  \left| G(u(\bsy)) - G(u^h(\bsy)) \right|
  &\,\le\, C\, h^{t+t'} \, \| f \|_{\cY'_{t}} \, \| G \|_{\cX'_{t'}}
\;.
\end{align}
\end{theorem}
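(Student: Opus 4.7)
The plan is to assemble the result from three standard ingredients: (i) a uniform (in $\bsy\in U$) discrete inf-sup stability for the pair $\cX^h\times\cY^h$, (ii) a C\'ea/Babu\v ska-type quasi-optimality estimate combined with the polynomial approximation property of the subspaces, and (iii) an Aubin-Nitsche duality argument for the goal functional error.

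First, I would transfer the discrete inf-sup condition for $\fa_0(\cdot,\cdot)$ to $\fa(\bsy;\cdot,\cdot)$ uniformly in $\bsy\in U$. Writing $\fa(\bsy;\cdot,\cdot)=\fa_0(\cdot,\cdot)+\sum_{j\ge1}y_j\fa_j(\cdot,\cdot)$, using $|y_j|\le1/2$ and Assumption \ref{ass:AssBj}(ii), a Neumann-series perturbation identical in form to the continuous argument leading to Theorem \ref{thm:BsigmaInv} yields
\[
\inf_{0\ne v^h\in\cX^h}\sup_{0\ne w^h\in\cY^h}\frac{\fa(\bsy;v^h,w^h)}{\|v^h\|_\cX\|w^h\|_\cY}\;\ge\;(1-\kappa/2)\bar\mu_0\;=\;\bar\mu,
\]
and analogously with the roles of $\cX^h$ and $\cY^h$ exchanged. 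Because \eqref{eq:parmOpEqh} is a square, finite-dimensional linear system, the discrete Babu\v ska-Ne\v cas theorem then delivers unique solvability of $u^h(\bsy)$ and the stability bound \eqref{eq:FEstab}. Along the way one also records that $\|A(\bsy)\|_{\cL(\cX,\cY')}\le\|A_0\|_{\cL(\cX,\cY')}(1+\kappa/2)$ uniformly in $\bsy$.

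Next, for \eqref{eq:reg1} I would use Galerkin orthogonality $\fa(\bsy;u(\bsy)-u^h(\bsy),w^h)=0$ for all $w^h\in\cY^h$ and the uniform discrete inf-sup to obtain the standard quasi-optimality estimate
\[
\|u(\bsy)-u^h(\bsy)\|_\cX\;\le\;\Bigl(1+\frac{\|A(\bsy)\|_{\cL(\cX,\cY')}}{\bar\mu}\Bigr)\inf_{v^h\in\cX^h}\|u(\bsy)-v^h\|_\cX.
\]
The hypothesis that the basis has smoothness degree $\lceil t\rceil$ supplies the approximation property $\inf_{v^h\in\cX^h}\|v-v^h\|_\cX\le C h^t\|v\|_{\cX_t}$ (extended to non-integer $t$ by interpolation within the scale \eqref{eq:SmScal}), which gives \eqref{eq:reg1}. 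The adjoint estimate \eqref{eq:reg2} is proved identically after swapping the roles of $\cX^h$ and $\cY^h$ and invoking the assumed uniform inf-sup stability for the adjoint problem.

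Finally---and this is the main step---I would carry out an Aubin-Nitsche duality argument to pass from single-rate convergence in $\cX$ to the doubled rate $h^{t+t'}$ in the goal functional. Let $w(\bsy)\in\cY$ solve the adjoint equation $\fa(\bsy;v,w(\bsy))={_{\cX'}}\langle G,v\rangle_{\cX}$ for every $v\in\cX$, and let $w^h(\bsy)\in\cY^h$ be its Petrov-Galerkin approximation. Using Galerkin orthogonality in the second equality,
\[
G(u(\bsy)-u^h(\bsy))\;=\;\fa(\bsy;u(\bsy)-u^h(\bsy),w(\bsy))\;=\;\fa(\bsy;u(\bsy)-u^h(\bsy),w(\bsy)-w^h(\bsy)),
\]
and the uniform continuity of $\fa(\bsy;\cdot,\cdot)$ combined with \eqref{eq:reg1}, \eqref{eq:reg2} and the a-priori bounds \eqref{ass:a priory} yields
\[
|G(u(\bsy)-u^h(\bsy))|\;\le\;C\,h^{t+t'}\|u(\bsy)\|_{\cX_t}\|w(\bsy)\|_{\cY_{t'}}\;\le\;C'\,h^{t+t'}\|f\|_{\cY'_t}\|G\|_{\cX'_{t'}}.
\]
The main subtlety throughout is not any single calculation but the bookkeeping of uniformity in $\bsy\in U$ of all constants (inf-sup, continuity, regularity); this uniformity is precisely what Assumptions \ref{ass:AssBj} and \ref{ass:XtYt} are engineered to provide.
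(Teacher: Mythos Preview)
The paper does not actually supply a proof of this theorem; it is stated with a citation to \cite[\S2.4]{DKLNS13} and the text moves directly to \S\ref{sec:dimtrunc}. Your sketch is correct and is precisely the standard argument one would expect from that reference: perturb the discrete inf-sup of $\fa_0$ to $\fa(\bsy;\cdot,\cdot)$ via the Neumann-series bound (yielding $\bar\mu=(1-\kappa/2)\bar\mu_0$), apply Babu\v ska--Ne\v cas for well-posedness and \eqref{eq:FEstab}, combine quasi-optimality with the approximation property for \eqref{eq:reg1}--\eqref{eq:reg2}, and run the Aubin--Nitsche duality for \eqref{eq:Gconvest}. Indeed, the very same duality identity $G(u-u^h)=\fa(\bsy;u-u^h,v_g-v_g^h)$ that you use for the last step is reproduced verbatim at the start of the paper's proof of Theorem~\ref{thm:main1}, so your approach is fully aligned with the paper's framework.
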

\subsection{Dimension truncation}
\label{sec:dimtrunc}
We truncate the infinite sum in \eqref{eq:Baffine} to $s$ terms and
solve the corresponding operator equation \eqref{eq:main}
approximately using Galerkin discretization from
two dense, one-parameter families
$\{\cX^h\}\subset \cX$, $\{\cY^h\}\subset \cY$ of subspaces
of $\cX$ and $\cY$:
for $s\in \bbN$ and $\bsy\in U$, we define
\begin{equation}\label{equ:defAs}
\fa_{s}(\bsy;v,w) := \Ylangle A^{(s)}(\bsy) v, w\Yrangle,
\quad\text{with}\quad
  A^{(s)}(\bsy) :=  A_0 + \sum_{j=1}^{s} y_j A_j.
\end{equation}
Then, for every $0<h \le h_0$ and every $\bsy\in U$,
the dimension-truncated Galerkin solution
$u^h_s(\bsy)$ is the solution of
\begin{equation} \label{eq:uhs}
\text{find } u^h_s(\bsy) \in \cX^h: \quad
\fa_s(\bsy;u^h_s(\bsy),w^h) = \Ylangle f, w^h \Yrangle \quad
\forall w^h\in \cY^h.
\end{equation}
By choosing $\bsy = (y_1,\ldots,y_s,0,0,\ldots)$,
Theorem~\ref{thm:Galerkin} remains 
valid for the dimensionally truncated problem \eqref{eq:uhs}, and hence
\eqref{eq:FEstab} holds with $u^h_s(\bsy)$ in place of $u^h(\bsy)$.

%

\begin{theorem}[{cp.~\cite[Theorem~2.6]{DKLNS13}}] \label{thm:trunc}
Under Assumption~\ref{ass:AssBj}, 
there exists a constant $C>0$ such that
for every $f\in \cY'$, for every $G\in \cX'$,
for every $\bsy\in U$,
for every $s\in\bbN$ and for every $h>0$,
the variational problem \eqref{eq:uhs}
admits a unique solution $u_s^h(\bsy)$
which satisfies
\begin{equation}\label{eq:Idimtrunc}
  |I(G(u^h))- I(G(u^h_s))|
  \,\le\, C\, 
  \|f\|_{\cY'}\, \|G\|_{\cX'}\,
  \bigg(\sum_{j\ge s+1} \beta_{0,j}\bigg)^2
\end{equation}
for some constant $C>0$ 
independent of $f$, $G$ and of $s$
where $\beta_{0,j}$ is defined in
\eqref{eq:Bjsmall}. In addition, if \eqref{eq:ordered} and
\eqref{eq:psumpsi0} hold with $p_0<1$, then
\[
  \sum_{j\ge s+1} \beta_{0,j}
  \,\le\,
  \min\left(\frac{1}{1/p_0-1},1\right)
  \bigg(\sum_{j\ge1} \beta_{0,j}^{p_0} \bigg)^{1/p_0}
  s^{-(1/{p_0}-1)}\;.
\]
\end{theorem}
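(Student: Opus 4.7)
The plan is to combine the Galerkin adjoint trick of \cite{DKLNS13} with the symmetry of the parameter domain $U = [-\tfrac{1}{2},\tfrac{1}{2}]^\bbN$ in order to extract the \emph{squared} tail factor in \eqref{eq:Idimtrunc}, rather than only the naive linear bound that the residual equation produces directly.

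Existence, uniqueness and the uniform stability of $u_s^h(\bsy)$ are immediate from Theorem~\ref{thm:Galerkin} applied to the truncated operator $A^{(s)}(\bsy)$, which has the same affine parametric form as $A(\bsy)$ with a subset of the fluctuation operators, so that Assumption~\ref{ass:AssBj} is inherited with a possibly smaller value of $\kappa$. For the error bound, I would introduce the truncated Petrov--Galerkin \emph{adjoint} $w_s^h(\bsy)\in\cY^h$ solving $\fa_s(\bsy;v^h,w_s^h(\bsy)) = G(v^h)$ for every $v^h\in\cX^h$. Since $u^h - u_s^h\in\cX^h$, testing the adjoint equation against $v^h = u^h - u_s^h$ and then using $\fa(\bsy;u^h,w_s^h) = \Ylangle f,w_s^h\Yrangle = \fa_s(\bsy;u_s^h,w_s^h)$ produces the key identity
\[
G(u^h(\bsy)) - G(u_s^h(\bsy)) \,=\, -\sum_{j>s} y_j\,\fa_j\bigl(u^h(\bsy),w_s^h(\bsy)\bigr).
\]

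Next I would integrate this identity over $U$. The decisive observation is that both $u_s^h(\bsy)$ and $w_s^h(\bsy)$ depend only on $(y_1,\ldots,y_s)$. Splitting $u^h = u_s^h + (u^h - u_s^h)$, the contribution of $u_s^h$ vanishes upon integration because the integrand is linear in $y_j$ and $\int_{-1/2}^{1/2} y_j\,dy_j = 0$ for each $j>s$. For the remaining piece, the Galerkin residual equation
\[
\fa(\bsy;u^h - u_s^h,w^h) \,=\, -\sum_{k>s} y_k\,\fa_k(u_s^h,w^h) \qquad \forall w^h\in\cY^h
\]
combined with the discrete inf-sup condition from Theorem~\ref{thm:Galerkin} and the norm comparison \eqref{eq:Aknorm} yields $\|u^h(\bsy) - u_s^h(\bsy)\|_\cX \,\lesssim\, \sum_{k>s} |y_k|\beta_{0,k}\,\|u_s^h(\bsy)\|_\cX$. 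Invoking the stability bounds $\|u_s^h\|_\cX \le \|f\|_{\cY'}/\bar\mu$ and $\|w_s^h\|_\cY \le \|G\|_{\cX'}/\bar\mu$ from \eqref{eq:FEstab}, together with the uniform bound $|y_j y_k|\le 1/4$ on $U$, yields the double sum
\[
\bigl|I(G(u^h)) - I(G(u_s^h))\bigr| \,\le\, C\,\|f\|_{\cY'}\|G\|_{\cX'} \sum_{j>s}\sum_{k>s}\beta_{0,j}\beta_{0,k} \,=\, C\,\|f\|_{\cY'}\|G\|_{\cX'}\Bigl(\sum_{j>s}\beta_{0,j}\Bigr)^{2}.
\]

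The tail bound is a standard $\ell^{p_0}\hookrightarrow\ell^1$ argument: monotonicity \eqref{eq:ordered} gives $\beta_{0,j}\le j^{-1/p_0}\bigl(\sum_{k\le j}\beta_{0,k}^{p_0}\bigr)^{1/p_0}$, and comparison of $\sum_{j>s}j^{-1/p_0}$ with $\int_s^\infty t^{-1/p_0}\,dt$ yields the prefactor $(1/p_0 - 1)^{-1}$, while the elementary $\ell^{p_0}\hookrightarrow\ell^1$ inequality (valid because $p_0\le 1$) provides the alternative constant $1$, giving the asserted $\min$. The main technical subtlety is the symmetry step: one must exploit independence of \emph{both} $u_s^h$ and the truncated adjoint $w_s^h$ from the tail variables and pair it with the mean-zero property of the Lebesgue measure on $[-\tfrac12,\tfrac12]$ in each tail coordinate; only then does the rate double from $\sum_{j>s}\beta_{0,j}$ to its square, which is what ultimately powers the $s^{-2(1/p_0-1)}$ dimension-truncation exponent in \eqref{eq:big4}.
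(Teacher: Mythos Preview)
The paper does not supply its own proof of this statement; it simply quotes the result from \cite[Theorem~2.6]{DKLNS13}. Your argument is correct and is precisely the mechanism behind the cited proof: the truncated discrete adjoint $w_s^h$ together with the residual identity $G(u^h-u_s^h)=-\sum_{j>s}y_j\,\fa_j(u^h,w_s^h)$, the vanishing of the $u_s^h$-contribution upon integration over the symmetric parameter domain (both $u_s^h$ and $w_s^h$ being independent of the tail variables), and the discrete inf-sup bound on $u^h-u_s^h$ producing a second tail factor. One cosmetic remark: your justification of the constant $1$ in the $\min$ as ``the elementary $\ell^{p_0}\hookrightarrow\ell^1$ inequality'' is slightly imprecise, since that embedding alone gives no $s$-decay; the clean route is $\sum_{j>s}\beta_{0,j}\le\beta_{0,s+1}^{\,1-p_0}\sum_{j>s}\beta_{0,j}^{p_0}$ combined with $(s+1)\,\beta_{0,s+1}^{p_0}\le\sum_{k\ge1}\beta_{0,k}^{p_0}$, which yields exactly the claimed bound with constant~$1$.
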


\subsection{Higher order QMC} \label{Sc:HiOrdQMC}
Higher order QMC rules were first studied in \cite{D08}.
Interlaced polynomial lattice rules are a special construction method of
higher order QMC rules which were first introduced in \cite{DiGo12} and further studied in \cite{Go13} and \cite{DKLNS13}.
The results in \cite{DKLNS13} use a non-Hilbert space setting
and bounds from \cite{D09}.
Following \cite{DKLNS13}, we consider numerical integration for smooth
integrands $F$ of $s$ variables defined over the unit cube $[0,1]^s$,
using a family of higher order digital nets called \emph{interlaced
polynomial lattice rules}. Below we only summarize the error bound, and
will not give any detail about interlaced polynomial lattice rules; the
full details can be found in \cite{DKLNS13}, for more background
information see also \cite{DiPi10}.

In particular, we are interested in integrands of the form $F(\bsy) =
G(u^h_s(\bsy-\bhalf))$. A novel non-Hilbert space setting was developed in
\cite{DKLNS13} to cater for such integrands. Let $\alpha, s\in\bbN$, and
$1\le q,r \le \infty$, and let $\bsgamma =
(\gamma_\setu)_{\setu\subset\bbN}$ be a collection of non-negative real
numbers, known as \emph{weights} (we refer to \cite{SW98} where the
concept was first introduced, and e.g., to \cite{DKS13} for
generalizations). Assume further that $F: [0,1]^s \to \mathbb{R}$ has
partial derivatives of orders up to $\alpha$ with respect to each
variable. Following \cite{DKLNS13}, we quantify the derivatives with
the norm of $F$ given by\footnote{The norm in
\cite[Definition~3.3]{DKLNS13} was incorrectly stated. The correct norm is
as given in \eqref{eq:defFabs} above. Since the correct norm was used in
the proof of \cite[Theorem~3.5]{DKLNS13}, all results in \cite{DKLNS13}
remain unaffected.}
\begin{align}\label{eq:defFabs}
 &\|F\|_{s,\alpha,\bsgamma,q,r} \,:=\,
 \Bigg[ \sum_{\setu\subseteq\{1:s\}} \Bigg( \frac{1}{\gamma_\setu^q}
 \sum_{\setv\subseteq\setu} \sum_{\bstau_{\setu\setminus\setv} \in \{1:\alpha\}^{|\setu\setminus\setv|}}
 \nonumber \\
 &\qquad\qquad\qquad\quad
 \int_{[0,1]^{|\setv|}} \bigg|\int_{[0,1]^{s-|\setv|}}
 (\partial^{(\bsalpha_\setv,\bstau_{\setu\setminus\setv},\bszero)}_\bsy F)(\bsy) \,
 \rd \bsy_{\{1:s\} \setminus\setv}
 \bigg|^q \rd \bsy_\setv \Bigg)^{r/q} \Bigg]^{1/r},
\end{align}
with the obvious modifications if $q$ or $r$ is infinite. Here
$\{1:s\}$ is a shorthand notation for the set $\{1,2,\ldots,s\}$, and
$(\bsalpha_\setv,\bstau_{\setu\setminus\setv},\bszero)$ denotes a sequence
$\bsnu$ with $\nu_j = \alpha$ for $j\in\setv$, $\nu_j = \tau_j$ for
$j\in\setu\setminus\setv$, and $\nu_j = 0$ for $j\notin\setu$. Two forms
of weights were considered in~\cite{DKLNS13}: SPOD weights 
(first introduced in \cite{DKLNS13}) take the form
\[ 
 \gamma_\setu
 \,:=\,
 \sum_{\bsnu_\setu \in \{1:\alpha\}^{|\setu|}}
 \Gamma_{|\bsnu_\setu|}  \prod_{j\in\setu} \gamma_j(\nu_j)\;,
\] 
while product weights take the form $\gamma_\setu := \prod_{j\in\setu}
\gamma_j$. We restrict to the case $r=\infty$, and we
use an abbreviated notation for the norm, namely, 
$\|F\|_{\calW_s} := \|F\|_{s,\alpha,\bsgamma,q,\infty}$.
\begin{theorem}[cp. {\cite[Theorems 3.5 and 3.9]{DKLNS13}}]\label{thm:wce}
Let $\alpha,s\in\bbN$ with $\alpha>1$, $1\le q\le\infty$ in
\eqref{eq:defFabs}, and let $\bsgamma = (\gamma_\setu)_{\setu\subset\bbN}$
denote a collection of weights. Let $b$ be prime and let $m\in\bbN$ be
arbitrary. Then, an interlaced polynomial lattice rule of order $\alpha$
with $N=b^m$ points $\{\bsy_0,\ldots,\bsy_{n-1}\}\in [0,1]^s$ can be
constructed using a component-by-component \textnormal{(}CBC\textnormal{)}
algorithm, such that
\begin{align*}
  \left| \int_{[0,1]^s} F(\bsy) \,\mathrm{d} \bsy - \frac{1}{b^m} \sum_{n=0}^{b^m-1} F(\bsy_n)\right|
  &\,\le\, \left(\frac{2}{b^{m}-1} \sum_{\emptyset\ne\setu\subseteq{\{1:s\}}}
  \gamma_\setu^\lambda\, [\rho_{\alpha,b}(\lambda)]^{|\setu|}\right)^{1/\lambda}\,
  \|F\|_{\calW_s},
\end{align*}
for all $1/\alpha < \lambda \le 1$, where
\begin{equation}\label{eq:defrhoab}
  \rho_{\alpha,b}(\lambda) \,:=\,
  \left(C_{\alpha,b}\,b^{\alpha(\alpha-1)/2}\right)^\lambda
  \left(\left(1+\frac{b-1}{b^{\alpha\lambda}-b}\right)^\alpha-1\right)\;,
\end{equation}
with
\begin{align*} 
 &C_{\alpha,b}
 \,:=\,
 \max\left(\frac{2}{(2\sin\frac{\pi}{b})^{\alpha}},\max_{1\le z\le\alpha-1}\frac{1}{(2\sin\frac{\pi}{b})^z}\right)
 \nonumber\\
 &\qquad\qquad\qquad\times
 \left(1+\frac{1}{b}+\frac{1}{b(b+1)}\right)^{\alpha-2}
 \left(3 + \frac{2}{b} + \frac{2b+1}{b-1} \right)\;.
\end{align*}
If the weights $\bsgamma$ are SPOD weights, then the CBC algorithm has
cost $\calO(\alpha\,s\, N\log N + \alpha^2\,s^2 N)$ operations. If the
weights $\bsgamma$ are product weights, then the CBC algorithm has cost
$\calO(\alpha\,s\, N\log N)$ operations.
\end{theorem}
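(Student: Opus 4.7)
The plan is to mirror the structure of \cite[Theorems~3.5 and~3.9]{DKLNS13}: first establish the stated worst-case error bound over the non-Hilbert function space with norm $\|\cdot\|_{\calW_s}$, and then show that the bound can be attained (up to the stated constants) by a generating vector produced by a CBC search, with the claimed arithmetic cost.

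For the error bound, I would begin with a Walsh series expansion of the integrand $F$ over the base-$b$ Walsh system. For an interlaced polynomial lattice rule of order $\alpha$ with $N=b^m$ points, the integration error equals a sum of Walsh coefficients of $F$ indexed by the dual net, because constant modes integrate exactly and non-constant Walsh modes integrate to zero iff they lie outside the dual net. The first step is then to bound the Walsh coefficients in terms of the norm in \eqref{eq:defFabs}: using integration by parts coordinate-wise together with the smoothness exponent $\alpha$, one obtains componentwise decay of the Walsh coefficients, and packaging this into a H\"older inequality in $(q,\infty)$ pairs the coefficient-decay factors against the norm $\|F\|_{\calW_s}$, leaving a purely arithmetic ``worst-case error'' quantity $e^2_{\alpha,s,\bsgamma}(P)$ that depends only on the point set $P$, the weights $\bsgamma$, the order $\alpha$, and the base $b$.

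Next, I would bound this worst-case error by a sum over non-empty subsets $\emptyset\ne\setu\subseteq\{1{:}s\}$ of terms $\gamma_\setu^\lambda\,[\rho_{\alpha,b}(\lambda)]^{|\setu|}$. The factor $\rho_{\alpha,b}(\lambda)$ in \eqref{eq:defrhoab} arises from summing the order-$\alpha$ Walsh coefficient decay over the interlaced digits (which is where the combinatorial factor $\bigl(1+\frac{b-1}{b^{\alpha\lambda}-b}\bigr)^\alpha-1$ comes from, valid only when $1/\alpha<\lambda\le 1$ so that the geometric series converges), and the constant $C_{\alpha,b}$ absorbs the base-dependent Walsh kernel prefactors from \cite{D08,D09}. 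The Jensen inequality $(\sum a_k)^\lambda\le \sum a_k^\lambda$ for $0<\lambda\le 1$ converts the $\ell^1$-type sum over the dual net into the $\ell^\lambda$-type sum over subsets appearing in the stated bound; dividing by $b^m-1$ accounts for the cardinality of the non-trivial index set.

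For the CBC part I would argue inductively on the coordinate index $j=1,\ldots,s$. Assuming a generating vector has been chosen for the first $j-1$ coordinates so that the truncated worst-case error is bounded by the corresponding truncated sum, one shows by an averaging argument over the $j$-th component of the generating vector (a polynomial modulo an irreducible polynomial of degree $m$) that at least one choice yields the bound with $\{1{:}j\}$ in place of $\{1{:}s\}$; this is the standard CBC step. To obtain the cost bound, I would exploit the SPOD structure of the weights: when $\gamma_\setu$ has the stated SPOD form, the sum over $\setu$ appearing in the CBC search criterion can be organized as a recursion over the ``degree variable'' $|\bsnu_\setu|$, giving the $\calO(\alpha\,s\,N\log N+\alpha^2\,s^2 N)$ count (the $N\log N$ term from fast evaluation via FFT/Walsh-transform per coordinate, the $\alpha^2 s^2 N$ term from updating the SPOD recursion). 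For product weights, the recursion collapses, leaving only $\calO(\alpha\,s\,N\log N)$.

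The main obstacle, as in \cite{DKLNS13}, is pairing the non-Hilbert $(q,\infty)$-H\"older duality with the Walsh coefficient bounds for interlaced nets and verifying that the CBC averaging argument still works in this non-Hilbert setting, since the classical lattice CBC theory rests on reproducing-kernel identities that are unavailable here. One has to replace those identities by the direct Jensen/H\"older manipulation described above and check that the resulting criterion is still separable enough in the coordinates to admit an inductive CBC construction with a fast SPOD recursion; this is precisely where the restriction $\alpha>1$ and $1/\alpha<\lambda\le 1$ is used.
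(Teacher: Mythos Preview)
The paper does not give its own proof of this theorem: it is quoted verbatim from \cite[Theorems~3.5 and~3.9]{DKLNS13} and stated without argument, so there is no in-paper proof to compare against. Your sketch is a faithful outline of the argument in \cite{DKLNS13}: Walsh expansion of $F$, the error as a dual-net sum of Walsh coefficients, the decay bounds from \cite{D08,D09} paired with the $(q,\infty)$ H\"older structure of \eqref{eq:defFabs} to isolate $\|F\|_{\calW_s}$, Jensen's inequality with exponent $\lambda\in(1/\alpha,1]$ to obtain the $\rho_{\alpha,b}(\lambda)$ factor and the subset sum, and finally the CBC averaging/induction plus the SPOD/product-weight recursions for the stated operation counts.
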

\section{Error analysis}
\label{sec:MultAlg}
In this section, we analyse the error of the algorithm \eqref{eq:QL*}. 
For a geometric sequence
\[
  h_\ell = 2^{-\ell}\, h_0
  \qquad\mbox{for}\quad \ell = 1,2,\ldots
\]
of discretization parameters (such as, for example, the meshwidths of a
family of nested simplicial triangulations of the domain $D\subset
\mathbb{R}^d$), we assume given nested sequences
$\{\cX^{h_\ell}\}_{\ell\ge 0}\subset \cX$
and
$\{\cY^{h_\ell}\}_{\ell\ge 0}\subset \cY$
of subspaces of equal, increasing dimensions,
\[
  M_{0} < M_{1} < \cdots < M_{\ell}
:= {\rm dim}(\cX^{h_\ell}) =  {\rm dim}(\cY^{h_\ell}) \asymp 2^{d\ell}
\qquad \mbox{as}\quad \ell \to \infty
\;.
\]
This scaling of $M_{\ell}$ with respect to $\ell$ is typical for Galerkin
discretizations which are based on subspace sequences
obtained by (isotropic) mesh refinements in spatial dimension $d$.
We assume moreover
that the sequence $\{s_\ell\}_{\ell\ge 0}$ is nondecreasing,
\begin{equation} \label{eq:AssMonbeta}
  s_0\le s_1\le\cdots\le s_\ell \cdots.
\end{equation}
Since we are working with interlaced polynomial lattice rules, we assume
also that
\[
  N_\ell = b^{m_\ell}
  \qquad\mbox{for}\quad \ell = 0,1,2,\ldots.
\]

For the error analysis of algorithm $Q_*^L(G(u))$ defined in
\eqref{eq:QL*}, we rewrite using linearity of $I$, $G$ and of
$Q_{s_\ell,N_\ell}$
\begin{align} \label{eq:error-bound}
  &I(G(u)) - Q_*^L(G(u))
  \nonumber\\
  &\,=\, I(G(u-u^{h_L})) + I(G(u^{h_L}-u^{h_L}_{s_L}))
  +
  \sum_{\ell=0}^L (I - Q_{s_\ell,N_\ell}) (G(u_{s_\ell}^{h_\ell}-u_{s_{\ell-1}}^{h_{\ell-1}}))
  \;,
\end{align}
recalling that $u_{s_{-1}}^{h_{-1}} := 0$. For the first term in
\eqref{eq:error-bound} we estimate the integrand by the supremum over
$\bsy\in U$ and then apply \eqref{eq:Gconvest}. For the second term in
\eqref{eq:error-bound} we use \eqref{eq:Idimtrunc}. For each term in the
sum over $\ell$ in \eqref{eq:error-bound} we apply Theorem~\ref{thm:wce},
noting that here $I$ is effectively an $s_\ell$-dimensional integral since
the integrand depends only on the first $s_\ell$ variables. With
$\rho_{\alpha,b}(\lambda)$ as in \eqref{eq:defrhoab}, we then obtain the
bound
\begin{align} \label{eq:error-bound2}
  &|I(G(u)) - Q_*^L(G(u))|
  \nonumber\\
  &\,\leq \, C\,h_L^{\tau}\, \|f\|_{\cY'_t}\,\|G\|_{\cX'_{t'}}
  \,+\, C\, \|f\|_{\cY'}\,\|G\|_{\cX'}
  \bigg(\sum_{j\ge s_L+1}  \beta_{0,j} \bigg)^2
  \nonumber\\
  &\qquad + \sum_{\ell=0}^L
  \left(\frac{2}{N_\ell-1} \sum_{\emptyset\ne\setu\subseteq{\{1:s_\ell\}}}
  \gamma_\setu^\lambda\, [\rho_{\alpha,b}(\lambda)]^{|\setu|}\right)^{1/\lambda}\,
  \|G(u^{h_\ell}_{s_\ell} - u^{h_{\ell-1}}_{s_{\ell-1}})\|_{\calW_{s_\ell}}
  \;.
\end{align}

To estimate the final sum in the error estimate
\eqref{eq:error-bound2}, we bound for $\ell \ne 0$ the term
$\|G(u^{h_\ell}_{s_\ell} - u^{h_{\ell-1}}_{s_{\ell-1}})\|_{\calW_{s_\ell}}$.
The triangle inequality yields
\begin{equation} \label{eq:key}
  \|G(u^{h_\ell}_{s_\ell} - u^{h_{\ell-1}}_{s_{\ell-1}})\|_{\calW_{s_\ell}}
  \,\le\,
  \|G(u^{h_\ell}_{s_\ell} - u^{h_{\ell-1}}_{s_\ell})\|_{\calW_{s_\ell}}
  + \|G(u^{h_{\ell-1}}_{s_\ell} - u^{h_{\ell-1}}_{s_{\ell-1}})\|_{\calW_{s_\ell}}
\;,
\end{equation}
where the first term on the right-hand side of \eqref{eq:key} can again be
bounded by
\begin{align}\label{eq:key2}
  \|G(u^{h_\ell}_{s_\ell}-u^{h_{\ell-1}}_{s_\ell})\|_{\calW_{s_\ell}}
  \,\le\, \| G(u_{s_\ell} - u^{h_\ell}_{s_\ell})
    \|_{\calW_{s_\ell}} + \| G(u_{s_\ell} -
    u^{h_{\ell-1}}_{s_\ell}) \|_{\calW_{s_\ell}}
\;.
\end{align}
We estimate these terms in the next subsection.

\subsection{Two key theorems}
\label{sec:2KeyThms}


Theorems~\ref{thm:main1} and~\ref{thm:main2} below generalize
\cite[Theorems~7 and~8]{KSS13}.
In their proofs we use the following
lemma, which generalizes
\cite[Lemma~1]{KSS13}.

Let $\indx \,:=\, \{ \bsnu \in \bbN_0^\bbN \; : \; |\bsnu| < \infty \}$
denote the (countable) set of all ``finitely supported'' multi-indices
(i.e., sequences of non-negative integers for which only finitely many
entries are non-zero). For $\bsnu\in\indx$, let $\supp(\bsnu) := \{ j\in
\bbN: \nu_j \ne 0 \}$ denote the ``support'' of $\bsnu$. For
$\bsm,\bsnu\in\indx$, we write $\bsm\le\bsnu$ if $m_j\le \nu_j$ for all
$j$, we define $\sbinom{\bsnu}{\bsm}:= \prod_{j\ge 1}
\sbinom{\nu_j}{m_j}$, and we let $\bsnu-\bsm$ denote a multi-index with
the elements $\nu_j-m_j$.
We denote by $\bse_k$ the sequence whose
$k$th component is $1$ and all other components are $0$.
\begin{lemma} \label{lem:recur}
Given non-negative real numbers $(\Upsilon_j)_{j\in\bbN}$, let
$(\bbA_\bsnu)_{\bsnu\in\indx}$ and $(\bbB_\bsnu)_{\bsnu\in\indx}$ be
non-negative real numbers satisfying the inequality
\[
  \bbA_\bsnu
  \,\le\, \sum_{j\in\supp(\bsnu)} \nu_j\,\Upsilon_j\, \bbA_{\bsnu-\bse_j} + \bbB_\bsnu
  \quad\mbox{for any $\bsnu\in\indx$ \textnormal{(}including $\bsnu=\bszero$\textnormal{)}}.
\]
Then for any $\bsnu \in \indx$
\[
  \bbA_\bsnu
  \,\le\, \sum_{\bsm\le\bsnu} \sbinom{\bsnu}{\bsm}\, |\bsm|!\,
  \bsUpsilon^\bsm\,\bbB_{\bsnu-\bsm}\;,
  \quad\mbox{with}\quad \bsUpsilon^\bsm := \prod_{j\ge 1} \Upsilon_j^{m_j}\;.
\]
\end{lemma}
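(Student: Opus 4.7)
The plan is to prove this by induction on $|\bsnu|$. The base case $|\bsnu|=0$ is immediate: the recurrence at $\bsnu=\bszero$ has an empty sum over $j\in\supp(\bszero)$, so $\bbA_\bszero\le\bbB_\bszero$, which matches the claimed bound since the only $\bsm\le\bszero$ is $\bsm=\bszero$, contributing $\sbinom{\bszero}{\bszero}\,0!\,\bsUpsilon^\bszero\,\bbB_\bszero=\bbB_\bszero$.

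For the inductive step, I would fix $\bsnu\in\indx$ with $|\bsnu|\ge 1$, apply the recurrence to $\bbA_\bsnu$, and then invoke the inductive hypothesis on each $\bbA_{\bsnu-\bse_j}$ (noting $|\bsnu-\bse_j|<|\bsnu|$). This yields
\[
  \bbA_\bsnu \,\le\, \bbB_\bsnu + \sum_{j\in\supp(\bsnu)} \nu_j\,\Upsilon_j
  \sum_{\bsm'\le\bsnu-\bse_j} \sbinom{\bsnu-\bse_j}{\bsm'}\,|\bsm'|!\,
  \bsUpsilon^{\bsm'}\,\bbB_{\bsnu-\bse_j-\bsm'}.
\]
The key step is then a change of summation variable $\bsm:=\bsm'+\bse_j$, so that $\bsUpsilon^{\bsm'}\Upsilon_j=\bsUpsilon^\bsm$ and the inner sum ranges over $\bsm\le\bsnu$ with $m_j\ge 1$. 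After swapping the order of summation, the contribution at a given $\bsm\ne\bszero$ becomes
\[
  (|\bsm|-1)!\,\bsUpsilon^\bsm\,\bbB_{\bsnu-\bsm}
  \sum_{j:\, m_j\ge 1} \nu_j\,\sbinom{\bsnu-\bse_j}{\bsm-\bse_j}.
\]

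The main obstacle, and the combinatorial heart of the argument, is establishing the identity
\[
  \sum_{j:\, m_j\ge 1} \nu_j\,\sbinom{\bsnu-\bse_j}{\bsm-\bse_j} \,=\, |\bsm|\,\sbinom{\bsnu}{\bsm}.
\]
I would prove this by observing that $\sbinom{\bsnu-\bse_j}{\bsm-\bse_j}$ differs from $\sbinom{\bsnu}{\bsm}$ only in the $j$-th factor, where $\binom{\nu_j-1}{m_j-1}=(m_j/\nu_j)\binom{\nu_j}{m_j}$, so that $\nu_j\sbinom{\bsnu-\bse_j}{\bsm-\bse_j}=m_j\sbinom{\bsnu}{\bsm}$; summing over $j$ with $m_j\ge 1$ then produces the factor $|\bsm|=\sum_j m_j$.

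Substituting this identity back and using $(|\bsm|-1)!\,|\bsm|=|\bsm|!$, the contribution at each $\bsm\ne\bszero$ becomes $\sbinom{\bsnu}{\bsm}\,|\bsm|!\,\bsUpsilon^\bsm\,\bbB_{\bsnu-\bsm}$. Combining with the standalone $\bbB_\bsnu$ (which is the $\bsm=\bszero$ term of the claimed formula, with $0!\,\bsUpsilon^\bszero\sbinom{\bsnu}{\bszero}=1$) completes the induction and hence the proof.
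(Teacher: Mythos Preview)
Your proof is correct and follows essentially the same approach as the paper: induction on $|\bsnu|$, application of the recurrence together with the inductive hypothesis, the change of variable $\bsm=\bsm'+\bse_j$, swapping the order of summation, and the combinatorial identity $\nu_j\sbinom{\bsnu-\bse_j}{\bsm-\bse_j}=m_j\sbinom{\bsnu}{\bsm}$ summed over $j$ to produce the factor $|\bsm|$. The paper's presentation is virtually identical, differing only in notation.
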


\begin{proof}
We prove this result by induction. The case $\bsnu = \bszero$ holds
trivially. Suppose that the result holds for all $|\bsnu| < n$ with some
$n\ge 1$. Then for $|\bsnu| = n$, we can use the inequality and the
induction hypothesis to write
\begin{align*}
  \bbA_\bsnu
  &\,\le\, \sum_{j\in\supp(\bsnu)} \nu_j\,\Upsilon_j\,
  \sum_{\bsm\le\bsnu-\bse_j}  |\bsm|! \, \sbinom{\bsnu-\bse_j}{\bsm} \,   \bsUpsilon^\bsm \,\bbB_{\bsnu-\bse_j-\bsm} + \bbB_\bsnu\;.
\end{align*}
Substituting $\bsm' = \bsm + \bse_j$, we can write
\begin{align*}
  \bbA_\bsnu
  &\,\le\, \sum_{j\in\supp(\bsnu)} \nu_j\,
  \sum_{\satop{\bsm'\le\bsnu}{m'_j\ge 1}} (|\bsm'|-1)!\,
  \frac{\sbinom{\nu_j-1}{m'_j-1}}{\sbinom{\nu_j}{m_j'}} \sbinom{\bsnu}{\bsm'}\,
  \bsUpsilon^{\bsm'}\,  \bbB_{\bsnu-\bsm'} + \bbB_\bsnu \\
  &\,=\, \sum_{\bszero\ne\bsm'\le\bsnu} \sum_{\satop{j\in\supp(\bsnu)}{m'_j\ge 1}}
  m_j'\,(|\bsm'|-1)!\, \sbinom{\bsnu}{\bsm'}\, \bsUpsilon^{\bsm'}\,  \,\bbB_{\bsnu-\bsm'} + \bbB_\bsnu \\
  &\,=\, \sum_{\bszero\ne\bsm'\le\bsnu} |\bsm'|\,
  (|\bsm'|-1)!\, \sbinom{\bsnu}{\bsm'}\, \bsUpsilon^{\bsm'}\,  \,\bbB_{\bsnu-\bsm'} + \bbB_\bsnu\;,
\end{align*}
which equals the desired formula.
\end{proof}

\begin{theorem}\label{thm:main1}
Under Assumptions~\ref{ass:AssBj} and~\ref{ass:XtYt} and the
conditions of Theorem~\ref{thm:Galerkin}, 
there exists $C>0$ such that
for every $f\in \cY'_{t}$
for every $G\in \cX'_{t'}$ with $0\le t,t'\le \bar{t}$,
for every $s\in \N$, and for every $h>0$ that is
admissible in the Galerkin discretization \eqref{eq:parmOpEqh}, 
there holds
\begin{align*}
  &\|G(u_s - u^h_s)\|_{\calW_s}
\\
  &\,\le\, C\, h^{t+t'}\,\|f\|_{\cY'_t}\, \|G\|_{\cX'_{t'}}
  \sup_{\setu\subseteq\{1:s\}}
  \frac{1}{\gamma_\setu}
  \sum_{\bsnu_\setu \in \{1:\alpha\}^{|\setu|}}
  (|\bsnu_\setu|+3)!\,\prod_{j\in\setu} \left(2^{\delta(\nu_j,\alpha)}\beta_{t,t',j}^{\nu_j}\right)
  \;,
\end{align*}
where $\beta_{t,t',j} :=
\max(\beta_{t,j},\beta_{t',j},\|A_j\|_{\cL(\cX,\cY')}/\bar{\mu},
\|A_j^*\|_{\cL(\cY,\cX')}/\bar{\mu})$, and $\delta(\nu_j,\alpha)$ is $1$
if $\nu_j=\alpha$ and is $0$ otherwise.
\end{theorem}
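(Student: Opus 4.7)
I plan to reduce the claim to the uniform-in-$\bsy$ pointwise bound
\begin{equation*}
\sup_{\bsy\in U}\bigl|\partial^{\bsnu}_{\bsy} G(u_s(\bsy)-u^h_s(\bsy))\bigr|
\;\le\; C\,h^{t+t'}\,\|f\|_{\cY'_t}\,\|G\|_{\cX'_{t'}}\,(|\bsnu|+3)!\,\bsbeta_{t,t'}^{\bsnu}
\end{equation*}
for every $\bsnu\in\indx$ with $\nu_j\le\alpha$ and $\nu_j=0$ for $j>s$. Plugging this into \eqref{eq:defFabs} with $r=\infty$ and using the sequence embedding $\ell^q\hookrightarrow\ell^1$ to replace each inner $q$-norm by an $\ell^1$-sum, I re-index the outer sum over $(\setv,\bstau_{\setu\setminus\setv})$ by $\bsnu_\setu\in\{1:\alpha\}^{|\setu|}$. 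A given $\bsnu_\setu$ is produced by exactly $\prod_{j\in\setu}2^{\delta(\nu_j,\alpha)}$ such pairs---an index $j$ with $\nu_j=\alpha$ can be placed either into $\setv$ or into $\setu\setminus\setv$ with $\tau_j=\alpha$---which accounts for the prefactor $2^{\delta(\nu_j,\alpha)}$ in the theorem.

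For the pointwise bound I use Aubin--Nitsche-type duality. Let $w_s(\bsy)\in\bcY$ solve the adjoint problem $\fa_s(\bsy;v,w_s)=G(v)$ for all $v\in\bcX$, and let $w^h_s\in\bcY^h$ be its Petrov--Galerkin approximation; Galerkin orthogonality yields $G(u_s-u^h_s)=\fa_s(\bsy;u_s-u^h_s,w_s-w^h_s)$. Since $\fa_s(\bsy;\cdot,\cdot)=\fa_0(\cdot,\cdot)+\sum_{j=1}^s y_j\fa_j(\cdot,\cdot)$ is \emph{affine} in $\bsy$, the Leibniz rule expresses $\partial^\bsnu_\bsy$ of the right-hand side as a sum of only two types of terms: $\fa_s(\bsy;T_\bsl,S_\bsm)$ with $\bsl+\bsm=\bsnu$, weighted by $\binom{\bsnu}{\bsl}$, and $\fa_j(T_\bsl,S_\bsm)$ with $\bsl+\bsm=\bsnu-\bse_j$, weighted by $\nu_j\binom{\bsnu-\bse_j}{\bsl}$, where $T_\bsl:=\partial^\bsl(u_s-u^h_s)$ and $S_\bsm:=\partial^\bsm(w_s-w^h_s)$. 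Continuity of $\fa_s$ and $\fa_j$ bounds each contribution by $\|T_\bsl\|_\bcX\,\|S_\bsm\|_\bcY$ multiplied by an operator norm of $A_0$ or $A_j$, each majorised by $\bar\mu\,\beta_{t,t',j}$ after normalisation.

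The technical crux is the recursive bound $\|T_\bsl\|_\bcX\le Ch^t(|\bsl|+1)!\,\bsbeta_{t,t'}^{\bsl}\,\|f\|_{\cY'_t}$ and its adjoint analogue. Differentiating the continuous and discrete equations $\bsl$ times and subtracting produces the modified Galerkin orthogonality $\fa_s(\bsy;T_\bsl,v^h)=-\sum_j l_j\fa_j(T_{\bsl-\bse_j},v^h)$ for every $v^h\in\bcY^h$. Decomposing $T_\bsl$ via the Galerkin projection $\Pi^h\partial^\bsl u_s\in\bcX^h$, bounding the best-approximation part through Theorem~\ref{thm:Galerkin} by $Ch^t\|\partial^\bsl u_s\|_{\cX_t}$, and controlling the remainder using the uniform discrete inf-sup constant $\bar\mu$ yields
\begin{equation*}
\|T_\bsl\|_\bcX \;\le\; Ch^t\|\partial^\bsl u_s\|_{\cX_t} + \sum_{j} l_j\,\Upsilon_j\,\|T_{\bsl-\bse_j}\|_\bcX,\qquad \Upsilon_j:=\frac{\|A_j\|_{\cL(\bcX,\bcY')}}{\bar\mu}\le\beta_{t,t',j}.
\end{equation*}
Inserting \eqref{ass:diffy} and invoking Lemma~\ref{lem:recur}, the resulting double sum collapses through two identities: the componentwise majorisation $\bsUpsilon^{\bsk}\bsbeta_t^{\bsl-\bsk}\le\bsbeta_{t,t'}^{\bsl}$, and the Vandermonde-type equality $\sum_{\bsk\le\bsl}\binom{\bsl}{\bsk}|\bsk|!\,|\bsl-\bsk|!=(|\bsl|+1)!$ (which follows from $\sum_{|\bsk|=r,\bsk\le\bsl}\binom{\bsl}{\bsk}=\binom{|\bsl|}{r}$). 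Multiplying by the adjoint bound for $S_\bsm$ and collapsing the outer Leibniz expansion via the further identity $\sum_{\bsl+\bsm=\bsnu}\binom{\bsnu}{\bsl}(|\bsl|+1)!(|\bsm|+1)!=(|\bsnu|+3)!/6$ produces the claimed $(|\bsnu|+3)!$ factor. The main obstacle is precisely orchestrating these three Vandermonde-type collapses---one for $T_\bsl$, one for $S_\bsm$, and one for the outer sum---so that the factorials and $\bsbeta_{t,t'}$-powers consolidate into a single clean bound rather than accumulating an exponential-in-$|\bsnu|$ prefactor.
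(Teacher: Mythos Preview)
Your proposal is correct and follows essentially the same route as the paper's proof: Aubin--Nitsche duality to write $G(u_s-u^h_s)=\fa(\bsy;e^h,e^h_g)$, Leibniz expansion of the affine-in-$\bsy$ bilinear form, the Galerkin-projection splitting $\partial^{\bsl}e^h=\calP^h\partial^{\bsl}e^h+(\calI-\calP^h)\partial^{\bsl}u_s$ together with the discrete inf-sup condition to obtain the recursion, Lemma~\ref{lem:recur} to unwind it, and the three Vandermonde collapses culminating in $(|\bsnu|+3)!/6$. One trivial slip: the embedding you invoke should read $\ell^1\hookrightarrow\ell^q$ (i.e., $\|\cdot\|_q\le\|\cdot\|_1$ for $q\ge1$), not the reverse.
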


\begin{proof}
Let $g \in \cX'_{t'}$ denote the representer of the functional $G\in
\cX'$. For arbitrary $\bsy \in U$, define $v_g(\bsy) \in \cY$ and
$v_g^h(\bsy) \in \cY_h$ by
\begin{align*}
 \fa(\bsy;w,v_g(\bsy)) &\,=\, G(w) =
 {_{\cX'}}\langle g , w \rangle_{\cX} && \forall w \in \cX,
 \\
 \fa(\bsy;w^h,v_g^h(\bsy)) &\,=\,
 {_{\cX'}}\langle g, w_h\rangle_{\cX} && \forall w^h \in \cX^h
\;.
\end{align*}
Taking $w=u_s(\bsy)-u_s^h(\bsy)$, we have
\begin{align*}
G(u_s(\bsy)-u_s^h(\bsy))
&=\fa(\bsy;u_s(\bsy) - u_s^h(\bsy),v_g(\bsy))
\\
&=\fa(\bsy;u_s(\bsy) - u_s^h(\bsy),
v_g(\bsy)-v_g^h(\bsy))\;,
\end{align*}
where we used Galerkin orthogonality
$\fa(\bsy;u_s(\bsy) - u_s^h(\bsy) , v_g^h ) = 0$.
Using the
definitions of the bilinear form and the norm, we have
\begin{align*}
  &\|G(u_s - u^h_s)\|_{\calW_s} \,=\,\sup_{\setu\subseteq\{1:s\}}
     \Bigg[
       \frac{1}{\gamma_\setu} \Bigg(
       \sum_{\setv\subseteq\setu} \sum_{\bstau_{\setu\setminus\setv} \in \{1:\alpha\}^{|\setu\setminus\setv|}} \\
  &\qquad\qquad\qquad\qquad
    \int_{[-\frac{1}{2},\frac{1}{2}]^{|\setv|}}
       \bigg|\int_{[-\frac{1}{2},\frac{1}{2}]^{s-|\setv|}}
          r_{{(\bsalpha_\setv,\bstau_{\setu\setminus\setv},\bszero)}}({\bsy_{\{1:s\}};\bszero}) \,\rd \bsy_{\{1:s\} \setminus\setv}
       \bigg|^q
       \rd\bsy_\setv
     \Bigg)^{1/q} \Bigg],
\end{align*}%
where we define, for any multi-index $\bsnu\in\indx$ and any $\bsy\in U$,
\begin{align} \label{ruv_1}
 r_\bsnu (\bsy)
 \,:=&\,
 \partial^{\bsnu}_\bsy
 {_{\cY'}}\langle A(\bsy)\,e^h(\bsy),
 e^h_g(\bsy)\rangle_{\cY} \nonumber\\
 \,=&\,
 \partial^{\bsnu}_{\bsy}
 \Ylangle A_0e^h(\bsy), e^h_g(\bsy) \Yrangle
 + \sum_{j\ge 1} \partial^{\bsnu}_{\bsy}
 \left(y_j \; \Ylangle A_j e^h(\bsy), e^h_g(\bsy) \Yrangle
 \right)
\;,
\end{align}
with the abbreviated notation $e^h(\bsy) \,:=\, (u-u^h)(\bsy)$ and
$e^h_g(\bsy) \,:=\, (v_g - v_g^h)(\bsy)$.
Applying the Leibniz product rule $\partial^\bsnu (PQ) = \sum_{\bsm
\le\bsnu} \sbinom{\bsnu}{\bsm} (\partial^{\bsnu - \bsm} P)
(\partial^{\bsm} Q)$, we obtain
\begin{align}
  &\mbox{Second term on the RHS of \eqref{ruv_1}}
  \,=\,
  \sum_{j\ge 1} \sum_{\bsm \le \bsnu} \sbinom{\bsnu}{\bsm}
  \left(\partial_{\bsy}^{\bsm} y_j\right)\,
  \partial_{\bsy}^{\bsnu-\bsm}
  \Ylangle A_j\,e^h(\bsy) , e^h_g(\bsy) \Yrangle
  \nonumber\\
  &\,=\,
  \sum_{j\ge 1}
  y_j\,
  \partial_{\bsy}^{\bsnu}
  \Ylangle A_j\,e^h(\bsy) , e^h_g(\bsy) \Yrangle
  +
  \sum_{j\in\supp(\bsnu)} \nu_j\,
  \partial_{\bsy}^{\bsnu-\bse_j}
  \Ylangle A_j\,e^h(\bsy) , e^h_g(\bsy) \Yrangle
  \;, \label{ruv_2}
\end{align}
where we noted that $\partial^{\bsm}_\bsy y_j$ is $y_j$ if
$\bsm = \bszero$, is $1$ if $\bsm=\bse_j$ and $\nu_j\ge 1$,
and equals $0$ otherwise.
Substituting \eqref{ruv_2} into \eqref{ruv_1} and applying again the
product rule gives
\begin{align*}
  r_{\bsnu}(\bsy)
  &\,=\,
  \sum_{\bsm \le\bsnu} \sbinom{\bsnu}{\bsm}\,
  \Ylangle A_0 \partial^{\bsm}_{\bsy} e^h(\bsy),
  \partial_{\bsy}^{\bsnu - \bsm} e^h_g(\bsy) \Yrangle \\
  &\qquad +
  \sum_{j\ge 1} y_j
  \sum_{\bsm \le\bsnu} \sbinom{\bsnu}{\bsm}\,
  \Ylangle A_j \partial_{\bsy}^{\bsm} e^h(\bsy), \partial_{\bsy}^{\bsnu-\bsm}e^h_g(\bsy) \Yrangle
  \\
 &\qquad +
  \sum_{j\in\supp(\bsnu)} \nu_j\,
  \sum_{\bsm \le\bsnu-\bse_j} \sbinom{\bsnu { - \bse_j } }{\bsm}\,
  \Ylangle A_j \partial_{\bsy}^{\bsm} e^h(\bsy), \partial_{\bsy}^{\bsnu {- \bse_j} -\bsm}e^h_g(\bsy) \Yrangle
\;.
\end{align*}
Combining the first two terms and then using the continuity of the
operators $\{A_j\}_{j\ge 0}$, we conclude that
\begin{align} \label{ruv_3}
  |r_{\bsnu}(\bsy)|
  &\,\le\, \|A(\bsy)\|_{\cL(\cX,\cY')}
  \sum_{\bsm \le\bsnu} \sbinom{\bsnu}{\bsm}\,
  \|\partial^{\bsm}_{\bsy}e^h(\bsy)\|_{\cX}\,
  \|\partial_{\bsy}^{\bsnu - \bsm}e^h_g(\bsy)\|_{\cY}
  \nonumber
  \\
  &\quad +
  \sum_{j\in\supp(\bsnu)} \nu_j\,\|A_j\|_{\cL(\cX,\cY')}
  \sum_{\bsm \le\bsnu-\bse_j} \sbinom{\bsnu { - \bse_j} }{\bsm}\,
  \|\partial_{\bsy}^{\bsm} e^h(\bsy)\|_{\cX}\,
  \|\partial_{\bsy}^{\bsnu {- \bse_j} -\bsm}e^h_g(\bsy)\|_{\cY}
 \;.
\end{align}

To continue, we bound
$\|\partial_{\bsy}^{\bsm} e^h(\bsy)\|_{\cX}
 = \|\partial_{\bsy}^{\bsm} (u-u^h)(\bsy)\|_{\cX}$.
Let $\calI:\cX\to\cX$ denote the identity operator, and
let $\calP^h = \calP^h(\bsy) :\cX\to\cX^h$ denote the parametric Galerkin
projection defined, for any $w\in \cX$ and for every $\bsy\in U$
by\footnote{Note carefully that the projection $\calP^h$ depends on $\bsy$;
in order to not overburden the notation, we shall not indicate this
dependence explicitly.}
\begin{equation} \label{eq:Ph}
\calP^h w \in \cX^h: \quad
\fa(\bsy; \calP^h w, z^h)
=
\fa(\bsy;w,z^h) \quad\forall\,z^h\in \cY^h
\;.
\end{equation}
Then we arrive at $u^h(\bsy) = \calP^h u(\bsy)\in \cX^h$ and
$\partial_{\bsy}^{\bsm} u^h\in \cX^h$, giving
$(\calI - \calP^h)\partial_{\bsy}^{\bsm} u^h = 0$.
Thus
\begin{align} \label{eq:new_1}
  \|\partial_{\bsy}^{\bsm} e^h(\bsy)\|_{\cX}
  & \,=\, \| \calP^h \partial_{\bsy}^{\bsm} e^h(\bsy)
  + (\calI - \calP^h) \partial_{\bsy}^{\bsm} u(\bsy)\|_{\cX} \nonumber
\\
  & \,\le\, \| \calP^h \partial_{\bsy}^{\bsm} e^h(\bsy)\|_{\cX}
  \,+\, \|(\calI - \calP^h) \partial_{\bsy}^{\bsm} u(\bsy)\|_{\cX}
\;.
\end{align}
Recall that Galerkin orthogonality gives
{$\Ylangle A(\bsy)\, e^h(\bsy), z^h\Yrangle = 0$
for all $z^h\in \cY^h$ and for all $\bsy\in U$}.
Taking the derivative $\partial_{\bsy}^{\bsm}$ and
following similar steps to \eqref{ruv_1}
and \eqref{ruv_2},
we obtain for all $z^h\in \cY^h$
and for all $\bsy\in U$ that
\begin{align} \label{eq:new_2}
  \Ylangle A(\bsy)\,\partial_{\bsy}^{\bsm}e^h(\bsy) , z^h\Yrangle
  &\,=\, - \sum_{j\in\supp(\bsm)} m_j\,
  \Ylangle A_j\,\partial_{\bsy}^{\bsm-\bse_j}e^h(\bsy) , z^h \Yrangle\;.
\end{align}
Using again the definition \eqref{eq:Ph} of $\calP^h$,
we may replace
$\partial_{\bsy}^{\bsm}e^h(\bsy)$ on the left-hand side of
\eqref{eq:new_2} by $\calP^h\partial_{\bsy}^{\bsm}e^h(\bsy)$.
From the discrete inf-sup condition in Theorem~\ref{thm:Galerkin} (which
holds uniformly with respect to $\bsy\in U$) with constant $\bar{\mu}>0$,
it follows that there are constants $c_1, c_2>0$, independent of $h$ and
$\bsy$ and satisfying $\bar{\mu} = c_2/c_1$, such that for every $\bsy\in
U$ and $h>0$ and given $\calP^h\partial_{\bsy}^{\bsm}e^h(\bsy)\in \cX^h$
there exists $z^h = \zeta^h(\bsy) \in \cY^h$ for which
$\|\zeta^h(\bsy)\|_\cY \leq c_1 \|
\calP^h\partial_{\bsy}^{\bsm}e^h(\bsy)\|_\cX$ and $\Ylangle
A(\bsy)\,\partial_{\bsy}^{\bsm}e^h(\bsy), \zeta^h(\bsy)\Yrangle \ge c_2\,
\|\calP^h\partial_{\bsy}^{\bsm}e^h(\bsy)\|_{\cX}^2$. These together with
\eqref{eq:new_2} give
\begin{align*}
  c_2\, \|\calP^h\partial_{\bsy}^{\bsm}e^h(\bsy)\|_{\cX}^2
  &\,\le\, c_1 \sum_{j\in\supp(\bsm)} m_j \,\|A_j\|_{\cL(\cX,\cY')}
  \|\partial_{\bsy}^{\bsm-\bse_j}e^h(\bsy)\|_{\cX}\,\,
  \|\calP^h\partial_{\bsy}^{\bsm}e^h(\bsy)\|_{\cX}\;,
\end{align*}
which in turn yields for every $\bsy\in U$ the bound
\begin{align} \label{eq:new_3}
  \|\calP^h\partial_{\bsy}^{\bsm}e^h(\bsy)\|_{\cX}
  &\,\le\, \sum_{j\in\supp(\bsm)} m_j\,\frac{\|A_j\|_{\cL(\cX,\cY')}}{{\bar{\mu}}}
  \|\partial_{\bsy}^{\bsm-\bse_j}e^h(\bsy)\|_{\cX}\;.
\end{align}
Substituting \eqref{eq:new_3} into \eqref{eq:new_1} and then applying
Lemma~\ref{lem:recur}, we obtain
\begin{align*}
  \|\partial_{\bsy}^{\bsm}e^h(\bsy)\|_{\cX}
  & \,\le\, \sum_{\bsm'\le\bsm}\, \sbinom{\bsm}{\bsm'}\, |\bsm'|!\,
  \prod_{j\ge 1} \bigg(\frac{\|A_j\|_{\cL(\cX,\cY')}}{\bar{\mu}}\bigg)^{m'_j}
  \|(\calI - \calP^h) \partial_{\bsy}^{\bsm-\bsm'} u(\bsy)\|_{\cX}\;.
\end{align*}
Now from \eqref{eq:reg1} and \eqref{ass:diffy} we have
\begin{align*}
  \|(\calI - \calP^h) \partial_{\bsy}^{\bsm-\bsm'} u(\bsy)\|_{\cX}
 \le \bar{C}_t\, h^t \|\partial_{\bsy}^{\bsm-\bsm'}u(\bsy)\|_{\cX_t}
 \le C_t\, h^t\, \|f\|_{\cY'_t}\, |\bsm-\bsm'|!\,\bsbeta_t^{\bsm-\bsm'}\!\!.
\end{align*}
Moreover, we have
\begin{align}\label{eq:combin}
  \sum_{\bsm'\le\bsm} \sbinom{\bsm}{\bsm'}\, |\bsm'|!\,|\bsm-\bsm'|!
  &\,=\, \sum_{i=0}^{|\bsm|} \sum_{\satop{\bsm'\le\bsm}{|\bsm'|=i}} \sbinom{\bsm}{\bsm'}\, i!\,(|\bsm|-i)! \nonumber\\
  &\,=\, \sum_{i=0}^{|\bsm|} \sbinom{|\bsm|}{i}\, i!\,(|\bsm|-i)!
  \,=\, (|\bsm| +1)!\;,
\end{align}
where the second equality above follows from the identity
$\sum_{\bsm'\le\bsm, |\bsm'|=i} \sbinom{\bsm}{\bsm'} =
\sbinom{|\bsm|}{i}$.
%
Defining $_{1}\beta_{t,j}:=\max(\beta_{t,j},
\|A_j\|_{\cL(\cX,\cY')}/\bar{\mu})$,
we conclude that
\begin{align} \label{eq:nice_1}
  \|\partial_{\bsy}^{\bsm} e^h(\bsy)\|_{\cX}
  \,\le\, C_t\, h^t\, \|f\|_{\cY'_t}\, (|\bsm|+1)!\,_{1}\bsbeta_t^{\bsm}
\;.
\end{align}

Similarly, with $f$ replaced by $g$, $u$ replaced by $v_g$, $u^h$ replaced
by $v_g^h$, $\cX$ replaced by $\cY$, $\cX^h$ replaced by $\cY^h$, and
$\bsm$ replaced by $\bsnu-\bsm$, as well as \eqref{eq:reg1} and
\eqref{ass:diffy} replaced by \eqref{eq:reg2} and \eqref{eq:refAd}, we
obtain, after introducing the sequence
$_{2}\beta_{t',j}:=\max(\beta_{t',j},
\|A_j^*\|_{\cL(\cY,\cX')}/\bar{\mu})$,
\begin{align} \label{eq:nice_2}
  \|\partial_{\bsy}^{\bsnu-\bsm} (v_g-v_g^h)(\bsy)\|_{\cY}
  \,\le\, C_{t'}\, h^{t'}\, \|g\|_{\cX'_{t'}}\, (|\bsnu-\bsm|+1)!\,_{2}\bsbeta_{t'}^{\bsnu-\bsm}\;.
\end{align}

Using \eqref{eq:nice_1} and \eqref{eq:nice_2} and the identity
$\sum_{\bsm\le\bsnu} \sbinom{\bsnu}{\bsm}\, (|\bsm|+1)!\,(|\bsnu-\bsm|+1)!
= (|\bsnu| +3)!/6$, which can be obtained in the same way as
\eqref{eq:combin}, we conclude from \eqref{ruv_3}
\begin{align*}
  |r_\bsnu(\bsy)|
  &\,\le\, C_{t,t'}\, h^{t+t'}\,\|f\|_{\cY'_t}\, \|g\|_{\cX'_{t'}}
  \bigg(
  \|A(\bsy)\|_{\cL(\cX,\cY')}
  \frac{(|\bsnu|+3)!}{6}\, \bsbeta_{t,t'}^{\bsnu} \nonumber\\
  &\qquad\qquad\qquad\qquad\qquad\qquad
   + \sum_{j\in\supp(\bsnu)} \nu_j\,\|A_j\|_{\cL(\cX,\cY')}
  \frac{(|\bsnu-\bse_j|+3)!}{6}\, \bsbeta_{t,t'}^{\bsnu-\bse_j}
  \bigg) \nonumber\\
  &\,\le\,
   \max\Big(\sup_{\bsz\in U} \| A(\bsz) \|_{\cL(\cX,\cY')},\bar\mu\Big)
   \, C_{t,t'}\, h^{t+t'}\,\|f\|_{\cY'_t}\, \|g\|_{\cX'_{t'}}
  (|\bsnu|+3)!\,\bsbeta_{t,t'}^{\bsnu}\;,
\end{align*}
where 
$\beta_{t,t',j} := \max(_{1}\beta_{t',j},\; _{2}\beta_{t',j}) =
\max(\beta_{t,j},\beta_{t',j},\|A_j\|_{\cL(\cX,\cY')}/\bar{\mu},
\|A_j^*\|_{\cL(\cY,\cX')}/\bar{\mu})$.
Since $A(\bsy)\in \cL(\cX,\cY')$ is uniformly bounded with respect to $\bsy\in U$, we
conclude that there exists a constant $C>0$ which is independent of $s$
and of $h$, such that
\begin{align*}
  \|G(u_s - u^h_s)\|_{\calW_s}
  &\,\le\, C\,h^{t+t'}\,\|f\|_{\cY'_t}\, \|g\|_{{\cX'_{t'}} }
  \\
  &\quad\times
  \sup_{\setu\subseteq\{1:s\}}
  \frac{1}{\gamma_\setu}
  \sum_{\setv\subseteq\setu} \sum_{\bstau_{\setu\setminus\setv} \in \{1:\alpha\}^{|\setu\setminus\setv|}}
  (|(\bsalpha_\setv,\bstau_{\setu\setminus\setv},\bszero)|+3)!\,
  \bsbeta_{t,t'}^{(\bsalpha_\setv,\bstau_{\setu\setminus\setv},\bszero)}\;,
\end{align*}
where the last double sum can be rewritten as
\[
 \sum_{\bsnu_\setu \in \{1:\alpha\}^{|\setu|}} 2^{|\{j\in\setu\,:\,\nu_j=\alpha\}|}
 (|\bsnu_\setu|+3)!\,\bsbeta_{t,t'}^{\bsnu_\setu}
 \,=\,
 \sum_{\bsnu_\setu \in \{1:\alpha\}^{|\setu|}}
 (|\bsnu_\setu|+3)!\,\prod_{j\in\setu} \left(2^{\delta(\nu_j,\alpha)}\beta_j^{\nu_j}\right)\;,
\]
where $\delta(\nu_j,\alpha)$ is $1$ if $\nu_j=\alpha$ and is $0$
otherwise. This completes the proof.
\end{proof}
\begin{theorem}\label{thm:main2}
Under Assumptions~\ref{ass:AssBj} and \ref{ass:XtYt} and the
conditions of Theorem~\ref{thm:Galerkin}, 
there exists a constant $C>0$ such that
for every $f\in \cY'$, every $G \in \cX'$, every $h>0$, and for every $\ell \ge 1$, 
\begin{align*}
 &\|G(u^h_{\sell} - u^h_{\msell})\|_{\calW_{s_\ell}}
 \,\le\, C\ \|f\|_{\cY'}\, \|G\|_{\cX'} \\
  &\qquad \times
  \max\Bigg(\bigg(\sum_{j=s_{\ell-1}+1}^{s_\ell} \beta_{0,j}\bigg)
  \sup_{\setu\subseteq\{1:s_{\ell-1}\}}
     \frac{1}{\gamma_\setu}
     \sum_{\bsnu_\setu \in \{1:\alpha\}^{|\setu|}}
     (|\bsnu_\setu|+1)!\,\prod_{j\in\setu} \big( 2^{\delta(\nu_j,\alpha)}\,\bar\beta_{0,j}^{\nu_j} \big),
  \\
  &\qquad\qquad\qquad\qquad\qquad\qquad\quad
  \sup_{\satop{\setu\subseteq\{1:s_{\ell}\}}{\setu\cap\{s_{\ell-1}+1:s_\ell\}\ne\emptyset}}
     \frac{1}{\gamma_\setu}
     \sum_{\bsnu_\setu \in \{1:\alpha\}^{|\setu|}}
     |\bsnu_\setu|!\,\prod_{j\in\setu} \big( 2^{\delta(\nu_j,\alpha)}\,\beta_{0,j}^{\nu_j} \big)
  \Bigg)
   \;,
\end{align*}
where
$\bar\beta_{0,j} := \max(\beta_{0,j},\|A_j\|_{\cL(\cX,\cY')}/\bar\mu)$,
and
$\delta(\nu_j,\alpha)$ equals $1$ if $\nu_j=\alpha$ and equals $0$ otherwise.
\end{theorem}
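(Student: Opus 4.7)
The plan is to bound $\|G(u^h_{\sell} - u^h_{\msell})\|_{\calW_{s_\ell}}$ by splitting the outer supremum over $\setu \subseteq \{1{:}s_\ell\}$ in the definition of the norm into two exhaustive cases according to whether $\setu \cap \{s_{\ell-1}+1{:}s_\ell\}$ is empty or not. Since $\|\cdot\|_{\calW_{s_\ell}}$ is a supremum over $\setu$, the overall bound equals the larger of the two case bounds, which produces the outer $\max$ appearing in the statement.

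\textbf{Case 2} ($\setu \cap \{s_{\ell-1}+1{:}s_\ell\} \ne \emptyset$). Any admissible multi-index $\bsnu = (\bsalpha_\setv, \bstau_{\setu\setminus\setv}, \bszero)$ in the norm then has a nonzero entry indexed in $\{s_{\ell-1}+1{:}s_\ell\}$, and since $u^h_{\msell}(\bsy)$ is independent of those coordinates we get $\partial_\bsy^\bsnu(u^h_{\sell}-u^h_{\msell}) = \partial_\bsy^\bsnu u^h_{\sell}$. Combining $|G(w)|\le \|G\|_{\cX'}\|w\|_\cX$ with the Galerkin counterpart of Theorem~\ref{thm:Dsibound} (obtained by applying the recursion \eqref{eq:new_3} and Lemma~\ref{lem:recur} with base case $\|u^h_{\sell}\|_\cX \le \|f\|_{\cY'}/\bar\mu$) gives $\|\partial_\bsy^\bsnu u^h_{\sell}(\bsy)\|_\cX \le C\,|\bsnu|!\,\bsbeta_0^\bsnu\,\|f\|_{\cY'}$; integrating and regrouping the double sum over $\setv\subseteq\setu$ and $\bstau_{\setu\setminus\setv}\in\{1{:}\alpha\}^{|\setu\setminus\setv|}$ into a single sum over $\bsnu_\setu \in \{1{:}\alpha\}^{|\setu|}$, with a factor $2^{\delta(\nu_j,\alpha)}$ per $j\in\setu$ exactly as in the closing display of the proof of Theorem~\ref{thm:main1}, delivers the second argument of the $\max$.

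\textbf{Case 1} ($\setu \subseteq \{1{:}s_{\ell-1}\}$). Here the difference $u^h_{\sell}-u^h_{\msell}$ is driven by the extra operators $A_{s_{\ell-1}+1},\ldots,A_{s_\ell}$, each carrying an explicit $y_j$ factor, and I exploit this via duality. Introduce the dimension-truncated Galerkin adjoint $v_g^{h,\msell}(\bsy)\in\cY^h$ defined by $\fa_{\msell}(\bsy; w^h, v_g^{h,\msell}(\bsy)) = {_{\cX'}}\langle g, w^h\rangle_{\cX}$ for all $w^h\in\cX^h$, where $g\in\cX'$ represents $G$. Choosing $w^h = u^h_{\sell}-u^h_{\msell} \in \cX^h$, adding and subtracting $\fa_{\msell}(\bsy; u^h_{\sell}, v_g^{h,\msell})$, and invoking the defining equations for $u^h_{\sell}$ (via $\fa_{\sell}$) and $u^h_{\msell}$ (via $\fa_{\msell}$) against the common test $v_g^{h,\msell}$ yields the duality identity
\[
G\bigl(u^h_{\sell}(\bsy) - u^h_{\msell}(\bsy)\bigr) \;=\; -\sum_{j=s_{\ell-1}+1}^{s_\ell} y_j\, \Ylangle A_j u^h_{\sell}(\bsy),\; v_g^{h,\msell}(\bsy)\Yrangle.
\]
The crucial point is that $v_g^{h,\msell}$ depends only on $y_1,\ldots,y_{\msell}$. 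For multi-indices $\bsm$ supported in $\{1{:}\msell\}$, $\partial_\bsy^\bsm$ passes through the $y_j$ factor, and Leibniz produces a sum of $\Ylangle A_j\partial_\bsy^{\bsm'}u^h_{\sell}, \partial_\bsy^{\bsm-\bsm'}v_g^{h,\msell}\Yrangle$. Bounding $\|A_j\|_{\cL(\cX,\cY')} \le \|A_0\|_{\cL(\cX,\cY')}\,\beta_{0,j}$, applying the Galerkin parametric regularity to both $u^h_{\sell}$ and $v_g^{h,\msell}$ (each supplying $\bar\bsbeta_0$-factors), invoking the combinatorial identity \eqref{eq:combin}, and using $|y_j|\le 1/2$ produces an outer factor $\sum_{j=s_{\ell-1}+1}^{s_\ell}\beta_{0,j}$ and an inner factor $(|\bsm|+1)!\,\bar\bsbeta_0^\bsm$. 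The same $\setv$/$\bstau$ regrouping as in Case~2 then furnishes the first argument of the $\max$.

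The principal obstacle is the derivation and correct deployment of the duality identity displayed above: one must pick the adjoint at the \emph{smaller} truncation level $\msell$ so the test function is free of the new coordinates, while still probing the defining equation of $u^h_{\sell}$ at the larger level $\sell$. Compatibility hinges on $v_g^{h,\msell} \in \cY^h$ being a legitimate test function in both Petrov--Galerkin systems. A secondary bookkeeping task is to verify that the extra $(|\bsm|+1)!$ combinatorial factor appearing in Case~1 (absent in Case~2) arises from the Leibniz expansion of the \emph{product} $\Ylangle A_j u^h_{\sell}, v_g^{h,\msell}\Yrangle$ of two $\bsy$-dependent quantities, whereas Case~2 differentiates a single quantity and thus exhibits only $|\bsnu|!$.
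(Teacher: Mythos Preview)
Your decomposition into two cases according to whether $\setu$ meets $\{s_{\ell-1}{+}1{:}s_\ell\}$, and your treatment of Case~2, coincide with the paper's proof. Case~1, however, is handled by a genuinely different mechanism. The paper does \emph{not} introduce the discrete adjoint; instead it subtracts the two Petrov--Galerkin systems to obtain
\[
\Ylangle A^{(\sell)}(\bsy)\,(u^h_{\sell}-u^h_{\msell})(\bsy),\,v^h\Yrangle
\,=\,-\,\Ylangle (A^{(\sell)}-A^{(\msell)})(\bsy)\,u^h_{\msell}(\bsy),\,v^h\Yrangle
\quad\forall\,v^h\in\cY^h,
\]
differentiates this identity with respect to $\bsy$ for $\bsnu$ supported in $\{1{:}\msell\}$, and uses the discrete inf-sup condition to extract a recursion for $\bbA_\bsnu:=\|\partial^\bsnu_\bsy(u^h_{\sell}-u^h_{\msell})(\bsy)\|_\cX$ with forcing $\bbB_\bsnu$ proportional to $\big(\sum_{j=\msell+1}^{\sell}\beta_{0,j}\big)\,\|\partial^\bsnu_\bsy u^h_{\msell}(\bsy)\|_\cX$. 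Lemma~\ref{lem:recur} together with \eqref{eq:combin} then yields the $(|\bsnu|+1)!\,\bar\bsbeta_0^\bsnu$ factor, and the functional is bounded only at the very end via $|G(w)|\le\|G\|_{\cX'}\|w\|_\cX$.

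Your duality route is sound: the identity you display is correct (insert $\pm\,\fa_{\msell}(\bsy;u^h_{\sell},v_g^{h,\msell})$ and use both defining equations with the common test $v_g^{h,\msell}\in\cY^h$), and Leibniz on the product combined with \eqref{eq:combin} recovers the same $(|\bsnu|+1)!\,\bar\bsbeta_0^\bsnu$ factor. The trade-off is that your argument needs parametric regularity for the \emph{discrete adjoint} $v_g^{h,\msell}$, hence relies on the adjoint discrete inf-sup hypothesis in Theorem~\ref{thm:Galerkin} (harmless here, and $\|A_j^*\|_{\cL(\cY,\cX')}=\|A_j\|_{\cL(\cX,\cY')}$ keeps the sequence $\bar\bsbeta_0$ unchanged); the paper's primal recursion avoids the adjoint altogether. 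In return, your argument is conceptually closer to the Aubin--Nitsche machinery of Theorem~\ref{thm:main1} and makes transparent why Case~1 gains exactly one factorial order over Case~2: the Leibniz expansion of a product of two $\bsy$-regular quantities produces the extra convolution summed by \eqref{eq:combin}.
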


\begin{proof}
Recalling the definition of the truncated bilinear form \eqref{equ:defAs},
for any $\bsy\in U$, $u^h_{\sell}(\bsy)$ and
$u^h_{\msell}(\bsy)$ are the solutions of the variational problems:
\begin{align}
\fa_{\sell}(\bsy; u^h_{\sell}(\bsy), v^h ) &= \Ylangle f, v^h \Yrangle
\qquad \forall v^h \in \cY^h\;, \label{eq:one}
\\
\fa_{\msell}(\bsy; u^h_{\msell}(\bsy), v^h ) &= \Ylangle f, v^h \Yrangle
\qquad \forall v^h \in \cY^h\;. \label{eq:two}
\end{align}
To estimate $\|G(u^h_{\sell} - u^h_{\msell})\|_{\calW_{s_\ell}}$, we make
use of the inequality
\[
  |\partial^\bsnu_\bsy
  (G(u^h_{\sell} - u^h_{\msell})(\bsy))|
  \,\le\, \|G\|_{\cX'}\, \|\partial^\bsnu_\bsy
  (u^h_{\sell} - u^h_{\msell})(\bsy)\|_{\cX}
\;.
\]

If $\supp(\bsnu)\cap\{s_{\ell-1}+1:s_\ell\}\ne\emptyset$, then it follows
from an adaption of \eqref{eq:Dsibound} for the Petrov-Galerkin
discretization that
\begin{equation} \label{eq:piece1}
  \|\partial^\bsnu_\bsy (u^h_{\sell} - u^h_{\msell})(\bsy)\|_{\cX}
  \,=\, \|\partial^\bsnu_\bsy u^h_{\sell}(\bsy)\|_{\cX}
  \,\le\, C_0\,|\bsnu|!\,\bsbeta_0^{\bsnu}\,\|f\|_{\cY'}\;.
\end{equation}
On the other hand, if $\supp(\bsnu)\subseteq\{1:s_{\ell-1}\}$, then we
subtract \eqref{eq:two} from \eqref{eq:one}
to obtain for every $\bsy \in U$ the equation
$\Ylangle A^{(\sell)}(\bsy) u^h_{\sell}(\bsy)-
   A^{(\msell)}(\bsy) u^h_{\msell}(\bsy), v^h \Yrangle =0 $
for all $v^h \in \cY^h$, or equivalently,
\[
\Ylangle A^{(\sell)}(\bsy)((u^h_{\sell}- u^{h}_{\msell})(\bsy)), v^h \Yrangle
=
-\Ylangle (A^{(\sell)}(\bsy) - A^{(\msell)}(\bsy)) u^{h}_{\msell}(\bsy), v^h \Yrangle
\;.
\]
Upon differentiating with respect to $\partial^\bsnu_\bsy$ for $\bsnu$
with $\supp(\bsnu)\subseteq\{1:s_{\ell-1}\}$, we obtain
\begin{align*}
 &\Ylangle A^{(\sell)}(\bsy)(\partial^\bsnu_\bsy(u^h_{\sell}- u^{h}_{\msell})(\bsy)), v^h \Yrangle
\\
 &\,=\, - \sum_{j\in\supp(\bsnu)} \nu_j\,
       \Ylangle A_j(\partial^{\bsnu-\bse_j}_\bsy(u^h_{\sell}- u^{h}_{\msell})(\bsy)), v^h \Yrangle
\\
 &\quad\;\, -\Ylangle (A^{(\sell)}(\bsy) - A^{(\msell)}(\bsy))\partial^\bsnu_\bsy u^{h}_{\msell}(\bsy), v^h \Yrangle
\;.
\end{align*}
Using the discrete inf-sup condition with parameter $\bar\mu>0$ as
in the proof of Theorem~\ref{thm:main1}, we choose $v^h$ to yield
\begin{align*}
 &\bar\mu\, \| \partial^\bsnu_\bsy(u^h_{\sell}- u^{h}_{\msell})(\bsy)\|_{\cX}^2
\\
 &\,\le\, \sum_{j\in\supp(\bsnu)} \nu_j\,
       \| A_j(\partial^{\bsnu-\bse_j}_\bsy(u^h_{\sell}- u^{h}_{\msell})(\bsy))\|_{\cY'}\,
       \| \partial^\bsnu_\bsy(u^h_{\sell}- u^{h}_{\msell})(\bsy)\|_{\cX}
\\
 &\quad\quad +\| (A^{(\sell)}(\bsy) - A^{(\msell)}(\bsy))\partial^\bsnu_\bsy u^{h}_{\msell}(\bsy)\|_{\cY'}\,
       \| \partial^\bsnu_\bsy(u^h_{\sell}- u^{h}_{\msell})(\bsy)\|_{\cX}
\;.
\end{align*}
Cancelling one common factor and applying further estimations, we obtain
\begin{align*}
 \|\partial^\bsnu_\bsy(u^h_{\sell}- u^{h}_{\msell})(\bsy)\|_{\cX}
 &\,\le\, \sum_{j\in\supp(\bsnu)} \nu_j\,
       \frac{\| A_j\|_{\cL(\cX,\cY')}}{\bar\mu}\,
       \|\partial^{\bsnu-\bse_k}_\bsy(u^h_{\sell}- u^{h}_{\msell})(\bsy)\|_{\cX}
\\
 &\quad\quad + \frac{1}{2} \sum_{j=s_{\ell-1}+1}^{s_\ell} \frac{\| A_j\|_{\cL(\cX,\cY')}}{\bar\mu}\,
 \|\partial^\bsnu_\bsy u^{h}_{\msell}(\bsy)\|_{\cX}
\;.
\end{align*}
Defining $\bar\beta_{0,j} :=
\max(\beta_{0,j},\|A_j\|_{\cL(\cX,\cY')}/\bar\mu)$, applying
Lemma~\ref{lem:recur}, and using again an adaption of
\eqref{eq:Dsibound} and the identity \eqref{eq:combin},
we obtain
\begin{align} \label{eq:piece2}
 &\|\partial^\bsnu_\bsy(u^h_{\sell}- u^{h}_{\msell})(\bsy)\|_{\cX} \nonumber
\\
 &\,\le\, \sum_{\bsm\le\bsnu} \sbinom{\bsnu}{\bsm}\, |\bsm|!\,\bar\bsbeta_0^\bsm
 \bigg(\frac{\|A_0\|_{\cL(\cX,\cY')}}{2\,\bar\mu} \sum_{j=s_{\ell-1}+1}^{s_\ell} \beta_{0,j}\,
 C_0\,|\bsnu-\bsm|!\,\bsbeta_0^{\bsnu-\bsm}\,\|f\|_{\cY'}\bigg) \nonumber\\
 &\,\le\, \frac{\|A_0\|_{\cL(\cX,\cY')}\,C_0}{2\,\bar\mu}\,\|f\|_{\cY'} \,(|\bsnu|+1)!\,\bar\bsbeta_0^\bsnu
 \sum_{j=s_{\ell-1}+1}^{s_\ell} \beta_{0,j}\;.
\end{align}

Combining \eqref{eq:piece1} and \eqref{eq:piece2}, 
we conclude that
\[
  \|G(u^h_{\sell} - u^h_{\msell})\|_{\calW_{s_\ell}} \,\le\, C\,\|f\|_{\cY'}\,\|G\|_{\cX'} \max(S_1,S_2)\;,
\]
with
\begin{align*}
  S_1 &:= \sum_{j=s_{\ell-1}+1}^{s_\ell} \beta_{0,j}
  \sup_{\setu\subseteq\{1:s_{\ell-1}\}}
     \frac{1}{\gamma_\setu}
     \sum_{\setv\subseteq\setu} \sum_{\bstau_{\setu\setminus\setv} \in \{1:\alpha\}^{|\setu\setminus\setv|}}
     (|(\bsalpha_\setv,\bstau_{\setu\setminus\setv},\bszero)|+1)!\,\bar\bsbeta_0^{(\bsalpha_\setv,\bstau_{\setu\setminus\setv},\bszero)}, \\
  S_2 &:= \sup_{\satop{\setu\subseteq\{1:s_{\ell}\}}{\setu\cap\{s_{\ell-1}+1:s_\ell\}\ne\emptyset}}
     \frac{1}{\gamma_\setu}
     \sum_{\setv\subseteq\setu} \sum_{\bstau_{\setu\setminus\setv} \in \{1:\alpha\}^{|\setu\setminus\setv|}}
     |(\bsalpha_\setv,\bstau_{\setu\setminus\setv},\bszero)|!\,\bsbeta_0^{(\bsalpha_\setv,\bstau_{\setu\setminus\setv},\bszero)}\;,
\end{align*}
which can be simplified to yield the desired result.
\end{proof}

\subsection{Error analysis of multi-level algorithm $Q^L_*$}
\label{seC:ErrMLAlgQL*}
In this section, we continue the error analysis of algorithm $Q^L_*$
defined in \eqref{eq:QL*} from the error bounds
\eqref{eq:error-bound2}--\eqref{eq:key2}. For the $\ell\ge 1$ terms we
apply Theorems~\ref{thm:main1} and~\ref{thm:main2}.
For the $\ell = 0$ term in \eqref{eq:error-bound2},
we use
$$
|\partial^\bsnu_\bsy G(u^{h_0}_{s_0}(\bsy))|
  \le \|G\|_{\cX'}\, \|\partial^\bsnu_\bsy u^{h_0}_{s_0}(\bsy)\|_{\cX}\,
  \le
  C_0\,|\bsnu|!\,\bsbeta_0^\bsnu\,\|f\|_{\cY'}\,\|G\|_{\cX'}
$$
to obtain
\begin{align*}
  \|G(u^{h_0}_{s_0})\|_{\calW_{s_0}} 
  &\,\le\,
  C_0\,\|f\|_{\cY'}\,\|G\|_{\cX'}
  \sup_{\setu\subseteq\{1:s_0\}}\frac{1}{\gamma_\setu}
  \sum_{\bsnu_\setu\in\{1:\alpha\}^{|\setu|}}
  |\bsnu_\setu|!\,\prod_{j\in\setu} \big(2^{\delta(\nu_j,\alpha)}\beta_{0,j}^{\nu_j}\big)
\;.
\end{align*}
Combining all these estimates, together with $\|f\|_{\cY'}\lesssim
\|f\|_{\cY'_t}$ and $\|G\|_{\cX'}\lesssim \|G\|_{\cX'_{t'}}$, with the
constants implied in $\lesssim$ depending on $t$ and $t'$ but independent
of $f$ and of $G$, we obtain for all $\lambda\in (1/\alpha,1]$,
with $\rho_{\alpha,b}$ as in \eqref{eq:defrhoab} the error bound
\begin{align} \label{eq:error-combined}
  &|I(G(u)) - Q_*^L(G(u))| \\
  &\le C\,\|f\|_{\cY'_t}\,\|G\|_{\cX'_{t'}} \Biggr[
  h_L^{\tau} + \bigg(\sum_{j\ge s_L+1}  \beta_{0,j}\bigg)^2
  \nonumber\\
  & + \Bigg(\frac{1}{N_0}\sum_{\emptyset\ne\setu\subseteq\{1:s_0\}}
  \gamma_\setu^\lambda\, [\rho_{\alpha,b}(\lambda)]^{|\setu|}\Bigg)^{1/\lambda}\!\!
  \Bigg(\sup_{\setu\subseteq\{1:s_0\}}\frac{1}{\gamma_\setu}
  \sum_{\bsnu_\setu\in\{1:\alpha\}^{|\setu|}}\!\!
  |\bsnu_\setu|!\,\prod_{j\in\setu} \big(2^{\delta(\nu_j,\alpha)}\beta_{0,j}^{\nu_j}\big)\Bigg)
 \nonumber\\
  &+ \sum_{\ell=1}^L
  \Bigg(\frac{1}{N_\ell}\sum_{\emptyset\ne\setu\subseteq\{1:s_\ell\}}
  \gamma_\setu^\lambda\, [\rho_{\alpha,b}(\lambda)]^{|\setu|}\Bigg)^{1/\lambda}\,\nonumber\\
  &\quad \cdot \Biggr[
  h_{\ell-1}^{\tau}\,
  \Bigg(\sup_{\setu\subseteq\{1:s_\ell\}}\frac{1}{\gamma_\setu}
  \sum_{\bsnu_\setu\in\{1:\alpha\}^{|\setu|}}
  (|\bsnu_\setu|+3)!\,\prod_{j\in\setu} \big(2^{\delta(\nu_j,\alpha)}\beta_{t,t',j}^{\nu_j}\big)\Bigg)
  \nonumber\\
  &\quad\, + \max \Bigg(
  \Bigg(\sum_{j=s_{\ell-1}+1}^{s_\ell} \beta_{0,j}\Bigg)
  \sup_{\setu\subseteq\{1:s_\ell\}}\frac{1}{\gamma_\setu}
  \sum_{\bsnu_\setu\in\{1:\alpha\}^{|\setu|}}
  (|\bsnu_\setu|+1)!\,\prod_{j\in\setu}
  \big(2^{\delta(\nu_j,\alpha)}\bar\beta_{0,j}^{\nu_j}\big), \nonumber
  \\
  &\qquad\qquad\qquad\qquad\qquad\qquad
  \sup_{\satop{\setu\subseteq\{1:s_{\ell}\}}{\setu\cap\{s_{\ell-1}+1:s_\ell\}\ne\emptyset}}
    \frac{1}{\gamma_\setu}
    \sum_{\bsnu_\setu \in \{1:\alpha\}^{|\setu|}}
    |\bsnu_\setu|!\,\prod_{j\in\setu} \big( 2^{\delta(\nu_j,\alpha)}\,\beta_{0,j}^{\nu_j} \big)
  \Bigg) \Biggr] \Biggr]\,, \nonumber
\end{align}
where $\sum_{j=s_{\ell-1}+1}^{s_\ell} \beta_{0,j} := 0$ 
if $s_\ell = s_{\ell-1}$, and where
we adopt the convention that a supremum over the empty set equals $0$.

\begin{theorem}
Under Assumptions~\ref{ass:AssBj} and \ref{ass:XtYt} and the
conditions of Theorem~\ref{thm:Galerkin},
for $f\in \cY'_t$ and $G\in \cX'_{t'}$ with $0 \le t,t'\le \bar{t}$
and $\tau:=t+t'>0$, consider the
multi-level QMC Petrov-Galerkin algorithm defined by \eqref{eq:QL*},
with interlaced polynomial lattice rules as in Theorem~\ref{thm:wce} with
SPOD weights 
\begin{equation}\label{eq:choiceweight-1}
\gamma_{\setu} \,:=\,
\sum_{\bsnu_\setu\in\{1:\alpha\}^{|\setu|}} (|\bsnu_\setu|+3)!\,
\prod_{j\in\setu} \big(2^{\delta(\nu_j,\alpha)}\beta_j^{\nu_j}\big)
\;,
\end{equation}
where, for $j\geq 1$, the SPOD weight sequence $\bsbeta$ is given by
\begin{align} \label{eq:beta}
  \beta_j
  := \max\bigg(\beta_{0,j}^{p_0/q},\beta_{t,j},\beta_{t',j},\beta_{0,j},
  \frac{\|A_j\|_{\cL(\cX,\cY')}}{\bar\mu},
  \frac{\|A_j^*\|_{\cL(\cY,\cX')}}{\bar\mu}
  \bigg)\;,
\end{align}
for some parameter $q$ satisfying $p_t\le q\le 1$.
Then for all $\lambda$ satisfying $\lambda\ge q$ and $1/\alpha < \lambda\le 1$ we have
\begin{align} \label{eq:error-simp}
  &|I(G(u)) - Q_*^L(G(u))|
  \,\le\, C\,D_\bsgamma(\lambda)\,\|f\|_{\cY'_t}\,\|G\|_{\cX'_{t'}}\,  \nonumber
  \\
  &\qquad\qquad\cdot
  \left[
  \left(h_L^{\tau} + s_L^{-2(1/{p_0}-1)} \right)
  + \sum_{\ell=0}^L N_\ell^{-1/\lambda}
    \left(h_{\ell-1}^{\tau} + \theta_{\ell-1}\,s_{\ell-1}^{-(1/{p_0}-1/q)}\right)
    \right]\;,
\end{align}
where
\[ 
  D_\bsgamma(\lambda)
  \,:=\,
  \Bigg(\sum_{|\setu|<\infty}
  \gamma_\setu^\lambda\, [\rho_{\alpha,b}(\lambda)]^{|\setu|}\Bigg)^{1/\lambda}
  \,<\,\infty\;.
\] 
In general we have $\theta_\ell = 1$ for all $\ell=0,\ldots,L$, but if
$s_\ell = s_{\ell-1}$ for some $\ell\ge 1$ then $\theta_{\ell-1} = 0$.
Maximal convergence rates from these bounds can be obtained with the
choices
\begin{equation}\label{eq:lamalpha}
  q \,:=\, p_t\;, \quad
  \lambda \,:=\, p_t \quad\mbox{and}\quad \alpha \,:=\, \lfloor 1/p_t\rfloor + 1.
\end{equation}
\end{theorem}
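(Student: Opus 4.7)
The plan is to substitute the SPOD weights \eqref{eq:choiceweight-1}--\eqref{eq:beta} into the combined error estimate \eqref{eq:error-combined} and simplify the four supremum factors appearing there. The key design principle behind the definition of $\beta_j$ in \eqref{eq:beta} is that $\beta_j$ dominates every sequence featuring in those numerators ($\beta_{t,t',j}$, $\bar\beta_{0,j}$, $\beta_{0,j}$), while the factorial $(|\bsnu_\setu|+3)!$ in $\gamma_\setu$ dominates $(|\bsnu_\setu|+1)!$ and $|\bsnu_\setu|!$ in the numerators. A term-by-term comparison inside each weight sum therefore shows that the smoothness-type supremum from Theorem~\ref{thm:main1}, the $\ell=0$ supremum, and the first alternative in the $\max(\cdot,\cdot)$ from Theorem~\ref{thm:main2} are all uniformly bounded by an absolute constant. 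The residual prefactor $\sum_{j=s_{\ell-1}+1}^{s_\ell}\beta_{0,j}$ in the first alternative is controlled by the tail bound in Theorem~\ref{thm:trunc}, giving $s_{\ell-1}^{-(1/p_0-1)}\le s_{\ell-1}^{-(1/p_0-1/q)}$ since $q\le 1$.

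The principal technical obstacle is the second alternative in the $\max$ from Theorem~\ref{thm:main2}: the supremum over $\setu\subseteq\{1:s_\ell\}$ intersecting the newly added coordinates $\{s_{\ell-1}+1:s_\ell\}$. For any admissible $\setu$ pick an index $j_0\in\setu\cap\{s_{\ell-1}+1:s_\ell\}$; since $\beta_j\ge\beta_{0,j}$ and every $\nu_j\ge 1$, the termwise bound
\[
 |\bsnu_\setu|!\prod_{j\in\setu}\beta_{0,j}^{\nu_j}
 \,\le\,(|\bsnu_\setu|+3)!\,\prod_{j\in\setu}\beta_j^{\nu_j}\cdot\frac{\beta_{0,j_0}}{\beta_{j_0}}
\]
holds for every $\bsnu_\setu$, so summing and dividing by $\gamma_\setu$ collapses the supremum to $\beta_{0,j_0}/\beta_{j_0}$. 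The crucial component $\beta_{0,j}^{p_0/q}$ in \eqref{eq:beta} then yields $\beta_{0,j_0}/\beta_{j_0}\le\beta_{0,j_0}^{1-p_0/q}$, and combining this with the ordering \eqref{eq:ordered} and the standard Stechkin bound $\beta_{0,s+1}\lesssim s^{-1/p_0}$ (itself a direct consequence of $\sum_j\beta_{0,j}^{p_0}<\infty$) delivers the target decay $s_{\ell-1}^{-(1/p_0-1/q)}$, uniformly in $\setu$. The dichotomy $\theta_{\ell-1}\in\{0,1\}$ is explained by the convention that the supremum over an empty index set is zero, which happens exactly when $s_\ell=s_{\ell-1}$.

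Finiteness of the normalizing constant $D_\bsgamma(\lambda)$ requires $\sum_j\beta_j^\lambda<\infty$ together with the SPOD product expansion from \cite[Section~3]{DKLNS13}. For $\lambda\ge q\ge p_t$, each constituent in \eqref{eq:beta} is $\ell^\lambda$-summable: the entry $\beta_{0,j}^{p_0/q}$ needs $\sum_j\beta_{0,j}^{\lambda p_0/q}<\infty$, which holds because $\lambda p_0/q\ge p_0$ and $\sum_j\beta_{0,j}^{p_0}<\infty$; the entries $\beta_{t,j}$ and $\beta_{t',j}$ are handled using $\lambda\ge p_t$ and Assumption~\ref{ass:XtYt}; the operator-norm entries are controlled by \eqref{eq:psumpsi0} via \eqref{eq:Aknorm}. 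Together with the dimension truncation contribution $s_L^{-2(1/p_0-1)}$ from Theorem~\ref{thm:trunc}, this establishes \eqref{eq:error-simp}.

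For the optimal parameter choice \eqref{eq:lamalpha}, three competing constraints must be balanced: the QMC factor $N_\ell^{-1/\lambda}$ is maximized by the smallest admissible $\lambda$; the dimension decay $s_{\ell-1}^{-(1/p_0-1/q)}$ is maximized by the smallest admissible $q$; and Theorem~\ref{thm:wce} requires $1/\alpha<\lambda$. The smallest permitted values are $q=p_t$ (needed for the $\ell^\lambda$-summability above) and $\lambda=q=p_t$; the smallest integer $\alpha$ satisfying $1/\alpha<p_t$ is $\lfloor 1/p_t\rfloor+1\ge 2$, completing the proof.
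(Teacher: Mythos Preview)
Your argument is correct and follows essentially the same route as the paper: bound the first three suprema in \eqref{eq:error-combined} trivially by~$1$ using $\beta_j\ge\max(\beta_{0,j},\bar\beta_{0,j},\beta_{t,t',j})$ and $(|\bsnu_\setu|+3)!\ge(|\bsnu_\setu|+1)!\ge|\bsnu_\setu|!$; extract the decay $s_{\ell-1}^{-(1/p_0-1/q)}$ from the last supremum via the component $\beta_{0,j}^{p_0/q}$ in \eqref{eq:beta}; and verify $D_\bsgamma(\lambda)<\infty$ by checking $\bsbeta\in\ell^\lambda$.

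Two minor points of comparison.  For the critical supremum $S_2$, the paper separates one coordinate $k\in\setu\cap\{s_{\ell-1}+1:s_\ell\}$, factors the numerator and denominator, and then drops the $\nu_k'\ne\nu_k$ denominator terms, obtaining $S_2\le\alpha\,\beta_{0,s_{\ell-1}+1}^{1-p_0/q}$.  Your termwise inequality $(\beta_{0,j_0}/\beta_{j_0})^{\nu_{j_0}}\le\beta_{0,j_0}/\beta_{j_0}$ for $\nu_{j_0}\ge 1$ is a slightly cleaner way to the same endpoint and avoids the extra factor~$\alpha$.  For $D_\bsgamma(\lambda)<\infty$, you cite the SPOD expansion from \cite{DKLNS13}; the paper instead reproduces the argument in full via Jensen's inequality and the relabelling $d_j:=\tilde\beta_{\lceil j/\alpha\rceil}$, followed by the ratio test on $\sum_\ell[(\ell+3)!]^\lambda(\sum_j d_j^\lambda)^\ell/\ell!$.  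Both amount to the same thing.
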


\begin{proof}
First we observe that $\beta_j$ defined in \eqref{eq:beta} is greater
than or equal to $\beta_{0,j}$, $\beta_{t,t',j}$ of
Theorem~\ref{thm:main1}, and $\bar\beta_{0,j}$ of Theorem~\ref{thm:main2}.
Thus, with weights given by \eqref{eq:choiceweight-1}, all suprema in the
error bound \eqref{eq:error-combined} are bounded by $1$. The motivation
for introducing $\beta_{0,j}^{p_0/q}$ in \eqref{eq:beta} is to improve the
bound on the last supremum in \eqref{eq:error-combined}, noting that when
$q=p_t$, $\beta_{0,j}^{p_0/p_t}$ has the same decay property as
$\beta_{t,j}$.
We bound $S_2$ in the proof of Theorem~\ref{thm:main2} as follows:
\begin{align*}
  &S_2 \,=\, \sup_{\satop{\setu\subseteq\{1:s_{\ell}\}}{\setu\cap\{s_{\ell-1}+1:s_\ell\}\ne\emptyset}}
     \frac{1}{\gamma_\setu}
     \sum_{\bsnu_\setu \in \{1:\alpha\}^{|\setu|}}
     |\bsnu_\setu|!\,\prod_{j\in\setu} \big( 2^{\delta(\nu_j,\alpha)}\,\beta_{0,j}^{\nu_j} \big) \\
 &\,=\, \sup_{k\in \{s_{\ell-1}+1 : s_\ell\}}
     \sup_{k\in\setu\subseteq\{1:s_{\ell}\}}
     \frac{\sum_{\bsnu_\setu \in \{1:\alpha\}^{|\setu|}}
     |\bsnu_\setu|!\,\prod_{j\in\setu} \big( 2^{\delta(\nu_j,\alpha)}\,\beta_{0,j}^{\nu_j} \big)}
     {\sum_{\bsnu_\setu' \in \{1:\alpha\}^{|\setu|}}
     (|\bsnu_\setu'|+3)!\,\prod_{j\in\setu} \big(2^{\delta(\nu_j',\alpha)}\,\beta_{j}^{\nu_j'} \big)} \\
  &\,=\, \sup_{k\in \{s_{\ell-1}+1 : s_\ell\}} \\
  &\qquad
     \sup_{\setv\subseteq\{1:s_{\ell}\}\setminus\{k\}}
     \frac{
     \sum_{\nu_k=1}^\alpha 2^{\delta(\nu_k,\alpha)}\,\beta_{0,k}^{\nu_k}\,
     \sum_{\bsnu_\setv \in \{1:\alpha\}^{|\setv|}}
     (|\bsnu_\setv|+1)!\,\prod_{j\in\setv} \big( 2^{\delta(\nu_j,\alpha)}\,\beta_{0,j}^{\nu_j} \big)}
     {
     \sum_{\nu_k'=1}^\alpha 2^{\delta(\nu_k',\alpha)}\,\beta_k^{\nu_k'}\,
     \sum_{\bsnu_\setv' \in \{1:\alpha\}^{|\setv|}}
     (|\bsnu_\setv'|+4)!\,\prod_{j\in\setv} \big(2^{\delta(\nu_j',\alpha)}\,\beta_j^{\nu_j'} \big)} \\
  &\,\le\, \sup_{k\in \{s_{\ell-1}+1 : s_\ell\}}
     \frac{\sum_{\nu_k=1}^\alpha 2^{\delta(\nu_k,\alpha)}\,\beta_{0,k}^{\nu_k}}
          {\sum_{\nu_k'=1}^\alpha 2^{\delta(\nu_k',\alpha)}\,\beta_{k}^{\nu_k'}}
  \,\le\, \sup_{k\in \{s_{\ell-1}+1 : s_\ell\}} \sum_{\nu_k=1}^\alpha  \beta_{0,k}^{(1-p_0/q)\nu_k}\;,
\end{align*}
where we dropped the $\nu_k'\ne \nu_k$ terms in the denominator and used
$\beta_k\ge\beta_{0,k}^{p_0/q}$.
Using \eqref{eq:ordered} and assuming that $s_{\ell-1}$ is
sufficiently large so that $\beta_{0,s_{\ell-1}+1}<1$,
we obtain
\[
  S_2
  \,\le\, \alpha\, \beta_{0,s_{\ell-1}+1}^{1-p_0/q}
  \,=\, \alpha\, \beta_{0,s_{\ell-1}+1}^{p_0(1/p_0-1/q)}
  \,\le\, \alpha\, s_{\ell-1}^{-(1/p_0-1/q)} \bigg(\sum_{j\ge 1}\beta_{0,j}^{p_0}\bigg)^{1/p_0-1/q}\;.
\]
In comparison, the tail sum $\sum_{j=s_{\ell-1}+1}^{s_\ell} \beta_{0,j} =
\calO(s_{\ell-1}^{-(1/p_0-1)})$ has a better exponent, and therefore is
dominated by $S_2$. 
This yields the simplified error bound \eqref{eq:error-simp}.

We now show that $D_\bsgamma(\lambda) < \infty$ for $\lambda\ge p_t$ and
$1/\alpha < \lambda\le 1$.
Using Jensen's inequality we have
\begin{align*}
  [ D_\bsgamma(\lambda) ]^\lambda
  &\,=\, \sum_{|\setu|<\infty}  [\rho_{\alpha,b}(\lambda)]^{|\setu|}
  \bigg( \sum_{\bsnu_\setu \in \{1:\alpha\}^{|\setu|}}  {(|\bsnu_\setu|+3)!}
  \,\prod_{j\in\setu}  \big( 2^{\delta(\nu_j,\alpha)} {\beta_j^{\nu_j}}\big) \bigg)^\lambda \\
 &\,\le\, \sum_{|\setu|<\infty}   \sum_{\bsnu_\setu \in \{1:\alpha\}^{|\setu|}}
  [{(|\bsnu_\setu|+3)!}]^\lambda \,\prod_{j\in\setu} {\widetilde\beta_j^{\lambda\nu_j}}\;.
\end{align*}
where we introduced
$\widetilde\beta_j :=
\rho^{1/\lambda}_{\alpha,b}(\lambda)  2^{ \delta(\nu_j,\alpha)} \beta_j$
to simplify the notation.
We now define a sequence $d_j :=
\widetilde\beta_{\lceil j/\alpha\rceil}$
so that $d_1 = \cdots = d_\alpha = \widetilde\beta_1$
and
$d_{\alpha+1} = \cdots = d_{2\alpha} =
\widetilde\beta_2$, and so on.
Then any term of the form
$[(|\bsnu_\setu|+3)!]^\lambda \,\prod_{j\in\setu}
\widetilde{\beta}_j^{\lambda \nu_j}$ can be written as
$
  [(|\setv|+3)!]^\lambda\, \prod_{j\in\setv} d_j^\lambda
$ 
for some finite subset of indices $\setv\subset\bbN$.
Thus we conclude that
\begin{align} \label{eq:last}
  [D_\bsgamma(\lambda)]^\lambda
  &\,<\, \sum_{\satop{\setv\subset\bbN}{|\setv|<\infty}}
  \bigg((|\setv|+3)! \prod_{j\in\setv} d_j\bigg)^\lambda \nonumber
 \\
  &\,=\, \sum_{\ell=0}^\infty [(\ell+3)!]^\lambda
  \sum_{\satop{\setv\subset\bbN}{|\setv|=\ell}} \prod_{j\in\setv} d_j^\lambda
  \,\le\, \sum_{\ell=0}^\infty \frac{[(\ell+3)!]^\lambda}{\ell!}
  \bigg(\sum_{j=1}^\infty d_j^\lambda\bigg)^\ell\;.
\end{align}
Note that $\sum_{j=1}^\infty d_j^{\lambda} < \infty$ holds if and only
if $\sum_{j=1}^\infty \beta_j^{\lambda} < \infty$.
By the ratio test, the
last expression in \eqref{eq:last} is finite if $p_t \le q \le \lambda < 1$.
Alternatively, using the geometric series formula, the last
expression in \eqref{eq:last} is finite if $\lambda=1$ and
$\sum_{j=1}^\infty d_j < 1$. Recall that $\lambda$ also needs to satisfy
$1/\alpha < \lambda\le 1$.
This leads to the choice \eqref{eq:lamalpha}.
\end{proof}

\subsection{Optimizing the cost versus error bound}

Recall that
\begin{equation} \label{eq:def_h}
  h_\ell \,\asymp\, 2^{-\ell}
  \quad\mbox{and}\quad
  M_{h_\ell} \,\asymp\, h_\ell^{-d} \,\asymp\, 2^{\ell d}
  \quad\mbox{for}\quad \ell=0,\ldots,L\;.
\end{equation}
Based on the error bound \eqref{eq:error-simp}
with \eqref{eq:lamalpha}, we now specify
$s_\ell$ and $N_\ell$ for each level.

To balance the error contribution within the highest discretization level,
we impose the condition $s_L^{-2(1/p_0-1)} = \calO( h_L^{\tau} )$, which
is equivalent to $s_L = \Omega( 2^{L\tau p_0/(2-2p_0)})$. Then, to
minimize the error within each level, one choice for $s_\ell$ is to set
$s_\ell = s_L$ for all $\ell<L$, leading to $\theta_{\ell-1} = 0$ for all
$\ell=1,\ldots,L$ in \eqref{eq:error-simp}.

Alternatively, since $s_\ell$ should be as small as possible from the
point of view of reducing the cost at each level, we may
impose the condition
${s_{\ell-1}^{-(1/p_0-1/p_t)} = s_{\ell-1}^{-t / d }}
=
\calO(h_{\ell-1}^{\tau} )$ for $\ell=1,\ldots,L$,
which is equivalent to
$s_\ell = \Omega( 2^{\ell\tau d /t})$ for $\ell=0,\ldots,L-1$,
where we substituted $p_t = p_0 /(1-t p_0 / d)$, see \eqref{eq:pt}.

Combining both approaches, while taking into account the monotonicity
condition \eqref{eq:AssMonbeta}, we choose
\begin{align} \label{eq:def_s}
  s_\ell \,:=\,
  \min \Big( \big\lceil 2^{\ell\tau d /t}\big\rceil ,
  \big\lceil 2^{L\tau p_0/(2-2p_0)}\big\rceil  \Big)
  \quad\mbox{for}\quad \ell=0,\ldots,L\;.
\end{align}
Thus we have $s_\ell$ strictly increasing for
$\ell=0,\ldots,\min(\lfloor Ltp_0/( d (2-2p_0) ) \rfloor,L)$,
and the remaining $s_\ell$ (if any) are all identical.
Our choice of $s_\ell$ leads to the error bound
\[ 
  {\rm error} \,=\,
  \calO \left(
  h_L^{\tau}
  + \sum_{\ell=0}^L N_\ell^{-1/p_t}\, h_{\ell}^{\tau} \right)\;,
\] 
where we used $h_{\ell-1} \asymp h_{\ell}$.
For our cost model we assume the availability of a linear complexity
Petrov-Galerkin solver so that
\[
  {\rm cost} \,=\, \calO\left(\sum_{\ell=0}^L N_\ell\, h_\ell^{-d}\,s_\ell\right)\;.
\]

To \emph{minimize the error bound for a fixed cost}, we treat the
cost constraint by a Lagrange multiplier ${\theta}$ and consider the
function
\[
  g({\theta}) \,:=\, \underbrace{h_L^{\tau}
  + \sum_{\ell=0}^L N_\ell^{-1/p_t} h_{\ell}^{\tau}}_{\mbox{\footnotesize{error bound}}}
  \;+\; {\theta}\;
  \underbrace{\sum_{\ell=0}^L N_\ell\, h_\ell^{-d}\,s_\ell}_{\mbox{\footnotesize{cost}}}
  \;.
\]
We look for the stationary point of $g(\theta)$ with respect to $N_\ell$,
thus demanding that
\[
  \frac{\partial g(\theta)}{\partial N_\ell}
  \,=\,
  -\frac{1}{p_t} N_\ell^{-1/p_t-1} h_{\ell}^{\tau} +
  \theta\,h_\ell^{-d}\,s_\ell \,=\, 0
  \qquad\mbox{for}\quad \ell=0,\ldots,L\;.
 \]
This prompts us to define
\begin{equation} \label{eq:def_N}
  N_\ell
  \,:=\,
  \Big\lceil N_0
  \left(h_0^{-\tau-d}\,s_0\, h_\ell^{\tau+d}\,s_\ell^{-1} \right)^{p_t/(p_t+1)}
  \Big\rceil
  \qquad\mbox{for}\quad \ell=1,\ldots,L\;.
\end{equation}
Leaving $N_0$ to be specified later and treating $h_0$ and $s_0$ as
constants, we conclude that
\[ 
  {\rm error} \,=\, \calO \left(
  h_L^{\tau}
  \;+\; N_0^{-1/p_t}
  \sum_{\ell=0}^L E_\ell \right)
  \quad\mbox{and}\quad
  {\rm cost} \,=\, \calO \left( N_0\, \sum_{\ell=0}^L E_\ell \right)\;,
\]
where $ E_\ell := (h_\ell^{p_t \tau-d}\,s_\ell)^{1/(p_t+1)}$. The error is
\emph{not} necessarily minimized by balancing the error terms between the
levels.

We consider separately the two alternative choices in \eqref{eq:def_s}:
choice ${{\mathcal A}}$ takes $s_\ell = \lceil 2^{\ell\tau {d} /t} \rceil$ for
all $\ell$, while choice ${{\mathcal B}}$ takes $s_\ell = \lceil 2^{L\tau
\kappa} \rceil$ for all $\ell$, where
\[
  \kappa \,:=\, p_0/(2-2p_0)\;.
\]
Since $E_\ell$ increases with increasing $s_\ell$, we have
\begin{align*}
  \sum_{\ell=0}^L E_\ell &\,\le\, \min \left(\sum_{\ell=0}^L
  E_\ell^{({\mathcal A})},
  \sum_{\ell=0}^L E_\ell^{({\mathcal B})}\right)\;,
\end{align*}
where
\begin{align}
  \sum_{\ell=0}^L E_\ell^{({\mathcal A})}
  &\,=\, \calO \Bigg( \sum_{\ell=0}^L 2^{\ell\tau (d/\tau - p_t + d /t)/(p_t+1)} \Bigg) 
\nonumber\\
  &\,=\,
  \begin{cases}
  \calO \big(1 \big)
    & \mbox{if } d/\tau < p_t - d /t\;,  
\\
  \calO \big(L\big)
    & \mbox{if } d/\tau = p_t - d /t\;, 
\\
  \calO \big(2^{L\tau (d/\tau - p_t + d /t)/(p_t+1)} \big)
    & \mbox{if } d/\tau > p_t - d /t\;,
  \end{cases}
  \label{eq:EA}
  \\
  \sum_{\ell=0}^L E_\ell^{({\mathcal B})}
  &\,=\, \calO
  \Bigg( 2^{L\tau\kappa/(p_t+1)} \sum_{\ell=0}^L 2^{\ell\tau (d/\tau-p_t)/(p_t+1)} \Bigg)
  \nonumber\\
  &\,=\,
  \begin{cases}
  \calO \big(2^{L\tau\kappa/(p_t+1)}\big)
    & \mbox{if } d/\tau < p_t \;,  \\
  \calO \big(2^{L\tau\kappa/(p_t+1)} L \big)
    & \mbox{if } d/\tau = p_t\;,  \\
  \calO \big(2^{L\tau(d/\tau-p_t+\kappa)/(p_t+1)} \big)
    & \mbox{if } d/\tau > p_t\;.
  \end{cases}
  \label{eq:EB}
\end{align}
Thus we can take the minimum between \eqref{eq:EA} and \eqref{eq:EB} as
appropriate.

For the ``intermediate case'' $p_t - d /t < d/\tau < p_t$, if the
``crossover'' index in \eqref{eq:def_s}, i.e., $\ell=\min(\lfloor
L\kappa t / d \rfloor,L)$, is strictly less than $L$ (which happens when
$\kappa t < d$), it may be beneficial to take the alternative approach to
estimate directly
\begin{align*}
  \sum_{\ell=0}^L E_\ell
  &\,=\, \calO \Bigg( \sum_{\ell=0}^{\lfloor L\kappa t / d \rfloor} 
2^{\ell\tau (d/\tau - p_t + d /t)/(p_t+1)}
  + 2^{L\tau\kappa/(p_t+1)} \sum_{\ell=\lfloor L\kappa t/d \rfloor+1}^L 2^{\ell\tau (d/\tau-p_t)/(p_t+1)} 
     \Bigg)\\
  &\,=\,
  \calO\big(
  2^{L\tau\kappa t (1/\tau - p_t/ d + 1/t)/(p_t+1)}
  + 2^{L\tau\kappa/(p_t+1) + L\tau\kappa t (1/\tau-p_t/ d)/(p_t+1)}
  \big) \\
  &\,=\,
  \calO\big(2^{L\tau\kappa t (1/\tau - p_t/ d + 1/t)/(p_t+1)}\big)\;,
\end{align*}
which is always smaller than the first case of \eqref{eq:EB}, 
and is smaller than or equal to the third case of \eqref{eq:EA} when 
$\kappa t\le d$.
Hence we conclude that
\begin{align*}
  \sum_{\ell=0}^L E_\ell
  &\,=\,
  \begin{cases}
  \calO \big(1 \big)
    & \mbox{if } d/\tau < p_t - d  /t\;,  \\
  \calO \big(L\big)
    & \mbox{if } d/\tau = p_t -  d /t\;,  \\
  \calO \big(2^{L\tau t\min({d} /t,\kappa) (1/\tau - p_t / d + 1/t) /(p_t+1)} \big)
    & \mbox{if } p_t - d /t < d/\tau <  p_t \;,  \\
  \calO \big(2^{L\tau\min( d /t,\kappa)/(p_t+1)} L \big)
    & \mbox{if } d/\tau = p_t\;,  \\
  \calO \big(2^{L\tau[d/\tau-p_t+\min( d /t,\kappa)]/(p_t+1)} \big)
    & \mbox{if } d/\tau > p_t\;.
  \end{cases}
\end{align*}

We choose $N_0$ to satisfy
\[ 
  N_0^{-1/p_t}\, \sum_{\ell=0}^L E_\ell \,=\, \calO( h_L^{\tau} )\;,
\] 
which is equivalent to $ N_0 = \Omega ( h_L^{-\tau p_t} (\sum_{\ell=0}^L
E_\ell)^{p_t})$. 
This yields
\begin{align} \label{eq:def_N0}
  N_0
  &\,:=\,
  \begin{cases}
  \big\lceil 2^{L\tau p_t} \big\rceil
    & \mbox{if } d/\tau < p_t - d/t\;,  \\
  \big\lceil 2^{L\tau p_t} L^{p_t} \big\rceil
    & \mbox{if } d/\tau = p_t - {d}/t\;,  \\
  \big\lceil 2^{L\tau [p_t+1+t\min(d/t,\kappa)(1/\tau - p_t/d + 1/t)]p_t/(p_t+1)} \big\rceil
    & \mbox{if } p_t - d/t < d/\tau <  p_t\;,  \\
  \big\lceil 2^{L\tau [p_t+1+\min(d/t,\kappa)]p_t/(p_t+1)} L^{p_t}  \big\rceil
    & \mbox{if } d/\tau = p_t\;,  \\
  \big\lceil 2^{L\tau [1+d/\tau +\min(d/t,\kappa)]p_t/(p_t+1)} \big\rceil
    & \mbox{if } d/\tau > p_t\;.
  \end{cases}
\end{align}
Then we have ${\rm error} = \calO ( h_L^{\tau})$, and
\begin{align*}
  {\rm cost} &\,=\, \calO \big(
  N_0^{(p_t+1)/p_t} h_L^{\tau}\big) \\
  &\,=\,
  \begin{cases}
  \calO\big( 2^{L\tau p_t} \big)
    & \mbox{if } d/\tau < p_t - d/t\;,  \\
  \calO\big( 2^{L\tau p_t} L^{p_t+1} \big)
    & \mbox{if } d/\tau = p_t - d/t\;,  \\
  \calO\big( 2^{L\tau [p_t + t\min( d /t,\kappa)( 1/\tau - p_t/d + 1/t )]} \big)
    & \mbox{if } p_t - d/t < d/\tau <  p_t \;,  \\
  \calO\big( 2^{L\tau [p_t+\min( d/t,\kappa)]} L^{p_t+1} \big)
    & \mbox{if } d/\tau = p_t\;,  \\
  \calO\big( 2^{L\tau [d/\tau+\min(d/t,\kappa)]} \big)
    & \mbox{if } d/\tau > p_t\;.
  \end{cases}
\end{align*}
For given $\varepsilon > 0$, we choose $L$ such that
\begin{equation} \label{eq:def_L}
  h_L^\tau \,\asymp\, 2^{-L\tau} \,\asymp\, \varepsilon\;.
\end{equation}
We can then express the total cost of the algorithm in terms of
$\varepsilon$.
\begin{theorem} \label{thm:summary}
Under Assumptions~\ref{ass:AssBj} and \ref{ass:XtYt} and the
conditions of Theorem~\ref{thm:Galerkin},
for $f\in \cY'_t$ and $G\in \cX'_{t'}$ with
$0 \le t,t'\le \bar{t}$ and $\tau:=t+t'>0$, we consider
the multi-level QMC Petrov-Galerkin algorithm defined by \eqref{eq:QL*}. 

Given $\varepsilon>0$, with $L$ given by \eqref{eq:def_L},
$h_\ell$ given by \eqref{eq:def_h}, $s_\ell$ given by \eqref{eq:def_s},
$N_\ell$ given by \eqref{eq:def_N}, $N_0$ given by \eqref{eq:def_N0}, and
with interlaced polynomial lattice rules constructed based on SPOD weights
$\gamma_\setu$ given by \eqref{eq:choiceweight-1} with $q = p_t$, 
we obtain
\[
  |I(G(u)) - Q_*^L(G(u))|
  \,=\, \calO \left(\varepsilon \right)
  \;,
\]
and
\[
  {\rm cost}(Q_*^L)
  \,=\, \calO \big( \varepsilon^{-a^{\rm ML}}\, (\log\varepsilon^{-1})^{b^{\rm ML}} \big)\;,
\]
with the constants implies in $\calO(\cdot)$ being independent of $h_\ell$, $s_\ell$ and $N_\ell$,
and
\begin{align*}
  a^{\rm ML}
  &\,=\,
  \begin{cases}
  p_t
    & \mbox{if}\quad \displaystyle \frac{d}{\tau} \le p_t - \frac{d}{t}\;,  \vspace{0.1cm} \\
  \displaystyle p_t + t\min\Big(\frac{ d }{t},\frac{p_0}{2-2p_0}\Big)\Big( \frac{1}{\tau} - \frac{p_t}{d} + \frac{1}{t}\Big)
    & \mbox{if}\quad \displaystyle p_t - \frac{d}{t} < \frac{d}{\tau} <  p_t\;,  \\
  \displaystyle \frac{d}{\tau} + \min\Big(\frac{d}{t},\frac{p_0}{2-2p_0}\Big)
    & \mbox{if}\quad \displaystyle \frac{d}{\tau} \ge p_t\;.
  \end{cases}
\end{align*}
The value of $b^{\rm ML}$ can be obtained from the cost bounds in a
similar way.
\end{theorem}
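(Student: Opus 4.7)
The plan is to verify the error claim and the cost claim separately, both by direct substitution into results that have effectively already been derived in Subsection~\ref{seC:ErrMLAlgQL*}. The preceding theorem establishes the simplified error bound \eqref{eq:error-simp} under the SPOD weight choice \eqref{eq:choiceweight-1} and the parameter selection \eqref{eq:lamalpha}, and the Lagrange-multiplier optimization immediately before the statement has already determined $s_\ell$, $N_\ell$, and $N_0$ as in \eqref{eq:def_s}, \eqref{eq:def_N}, \eqref{eq:def_N0}. So no new analytic inequalities are needed; what remains is to trace through the five-regime asymptotics for $\sum_{\ell=0}^L E_\ell$, substitute the choice \eqref{eq:def_L} of $L$, and read off the exponent.

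For the error claim, I would apply \eqref{eq:error-simp} term by term. The choice \eqref{eq:def_L} gives $h_L^\tau \asymp \varepsilon$. The lower bound $s_L \geq \lceil 2^{L\tau p_0/(2-2p_0)}\rceil$ built into \eqref{eq:def_s} yields $s_L^{-2(1/p_0-1)} = \mathcal{O}(2^{-L\tau}) = \mathcal{O}(\varepsilon)$. For the sum, \eqref{eq:def_s} ensures $s_{\ell-1}^{-(1/p_0-1/p_t)} = s_{\ell-1}^{-t/d} = \mathcal{O}(h_{\ell-1}^\tau)$ whenever $\theta_{\ell-1}=1$, so each level contributes $\mathcal{O}(N_\ell^{-1/p_t} h_\ell^\tau)$. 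Substituting \eqref{eq:def_N} into $\sum_{\ell=0}^L N_\ell^{-1/p_t}\,h_\ell^\tau$ yields $N_0^{-1/p_t}\sum_{\ell=0}^L E_\ell$ with $E_\ell = (h_\ell^{p_t\tau-d}\,s_\ell)^{1/(p_t+1)}$, and by construction \eqref{eq:def_N0} this equals $\mathcal{O}(h_L^\tau) = \mathcal{O}(\varepsilon)$.

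For the cost claim, the linear-complexity assumption gives $\mathrm{cost}(Q_*^L) = \mathcal{O}(\sum_{\ell=0}^L N_\ell\,h_\ell^{-d}\,s_\ell)$. Substituting \eqref{eq:def_N} into this expression, each summand becomes $N_0\,h_\ell^{\tau\,p_t/(p_t+1)-d/(p_t+1)}\,s_\ell^{1/(p_t+1)}\cdot h_0^{-(\tau+d)\,p_t/(p_t+1)}\,s_0^{p_t/(p_t+1)}$, which up to level-$0$ constants is $N_0\,E_\ell$. Hence $\mathrm{cost} = \mathcal{O}(N_0\sum_{\ell=0}^L E_\ell)$. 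Using the relation $N_0^{1/p_t}\sum_{\ell=0}^L E_\ell \asymp h_L^{-\tau}$ enforced by \eqref{eq:def_N0}, we obtain the compact form $\mathrm{cost} = \mathcal{O}(N_0^{(p_t+1)/p_t}\,h_L^\tau)$. Substituting \eqref{eq:def_L} so that $2^{L\tau}\asymp\varepsilon^{-1}$ and inserting \eqref{eq:def_N0} in each regime then yields the advertised exponent $a^{\rm ML}$, with any $L^{p_t+1}$ factor supplying the polylogarithmic correction $(\log\varepsilon^{-1})^{b^{\rm ML}}$.

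The main bookkeeping obstacle is the reconciliation of the five regimes of $\sum_{\ell=0}^L E_\ell$ appearing in \eqref{eq:EA}--\eqref{eq:EB} and the intermediate case derived right after, with the three-regime statement of $a^{\rm ML}$. In the regime $d/\tau < p_t - d/t$ the sum is $\mathcal{O}(1)$ and level zero dominates; in the regime $d/\tau \geq p_t$ the finest level dominates and the $\min(d/t,\kappa)$ picks the better of the two choices in \eqref{eq:def_s}; in the intermediate regime $p_t - d/t < d/\tau < p_t$ the ``crossover'' index $\lfloor L\kappa t/d\rfloor$ from \eqref{eq:def_s} governs a mixed sum which, once simplified using $p_t = p_0/(1-tp_0/d)$, collapses to the middle formula for $a^{\rm ML}$. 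The boundary cases $d/\tau = p_t - d/t$ and $d/\tau = p_t$ only affect $b^{\rm ML}$ through an extra logarithmic factor, consistent with the closing sentence of the theorem. Throughout, all implied constants remain independent of $h_\ell$, $s_\ell$, $N_\ell$ because the only $\ell$-dependence enters through the explicit geometric sequences $h_\ell = 2^{-\ell}h_0$ and the closed-form selections \eqref{eq:def_s}--\eqref{eq:def_N0}.
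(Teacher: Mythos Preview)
Your proposal is correct and mirrors the paper's approach exactly: the paper does not give a separate proof of Theorem~\ref{thm:summary} but rather states it as a summary of the optimization carried out in the preceding paragraphs of \S3.3, and your write-up traces those same steps (error bound \eqref{eq:error-simp} with the choices \eqref{eq:def_s}--\eqref{eq:def_N0}, the identity ${\rm cost}=\calO(N_0^{(p_t+1)/p_t}h_L^\tau)$, and the collapse of the five $E_\ell$-regimes into three via $2^{L\tau}\asymp\varepsilon^{-1}$). One small slip: the relation enforced by \eqref{eq:def_N0} is $N_0^{-1/p_t}\sum_\ell E_\ell \asymp h_L^{\tau}$, not $N_0^{1/p_t}\sum_\ell E_\ell \asymp h_L^{-\tau}$, but your subsequent cost formula is correct regardless.
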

\subsection{Discussion of particular cases}
\label{sec:DiscCase}
In comparison, for the single level QMC Petrov-Galerkin algorithm in
\cite{DKLNS13} to achieve $\calO(\varepsilon)$ error, its overall cost in
the case of $p_0<1$ is $\calO(\varepsilon^{-a^{\rm SL}})$, with
\begin{equation} \label{equ:aSL}
  a^{\rm SL}
  \,=\, \frac{p_0}{2-2p_0} + p_0 + \frac{d}{\tau}
\;.
\end{equation}

Assuming that $p_0,t,t',d>0$ are free variables and recalling that $\tau =
t + t'$, we discuss when the multi-level algorithm is more cost effective
than the single level algorithm, bearing in mind the constraints between
these variables which are implicit in the error bounds.

\noindent {\bf  (a)}
If $d/\tau \le p_t - d/t$, then
\begin{equation*}
 a^{\rm SL} - a^{\rm ML} \,=\,
\frac{p_0}{2-2p_0} + p_0 + \frac{d}{\tau} - p_t\;,
\end{equation*}
which is positive if
\begin{equation*} \label{equ:2a}
    \frac{d}{\tau} + \frac{d}{t} \,\le\,
    p_t \,<\, \frac{p_0}{2-2p_0} + p_0 + \frac{d}{\tau}
\;.
\end{equation*}

\noindent {\bf  (b1)}
If $p_t -d/t < d/\tau < p_t$ and $d/t\le p_0/(2-2p_0)$, then
\begin{equation*}
  a^{\rm SL} - a^{\rm ML} \,=\,
  p_0 + \Big(\frac{p_0}{2-2p_0} - \frac{d}{t}\Big) \,>\, 0\;.
\end{equation*}

\noindent {\bf  (b2)} 
If $p_t - d/t < d/\tau < p_t$ and $d/t > p_0/(2-2p_0)$, 
then
\begin{equation*}
  a^{\rm SL} - a^{\rm ML} \,=\,
  p_0 - \Big(1 - \frac{t p_0}{d(2-2p_0)}\Big) \Big(p_t - \frac{d}{\tau}\Big)\;,
\end{equation*}
which is positive if
\begin{equation*} 
    \frac{d}{\tau} \,<\, p_t \,<\, \frac{d}{\tau} + \frac{p_0}{1 - tp_0/(2d(1-p_0))} \;.
\end{equation*}

\noindent {\bf (c)} If $d/\tau \ge p_t$, then
\begin{equation*}
  a^{\rm SL} - a^{\rm ML} \,=\,
  p_0 + \Big( \frac{p_0}{2-2p_0} - \min\Big(\frac{d}{t},\frac{p_0}{2-2p_0}\Big)\Big) \,>\, 0
\;.
\end{equation*}

We see that the multi-level algorithm outperforms the single level one
over a large range of $p_t$ and $t$. In particular, 
for $t=t'=1$ and in the symmetric case, eg.\ when continuous, 
piecewise linear Finite Elements are used to discretize the second order, 
self-adjoint elliptic PDE \eqref{eq:PDE2}, 
the multi-level algorithm $Q^*_L$ in \eqref{eq:QL*}
always outperforms the single level one when $d\ge 2$ under
Assumption~\eqref{eq:psumpsi0}.
%
\section{Numerical Experiments}
\label{sec:NumExp}
For a parameter $\bsy\in U=[-\frac12,\frac12]^\bbN$, 
in the physical domain $ D=(0,1)^2 $,
we consider the parametric diffusion equation 
\eqref{eq:PDE1} with homogeneous Dirichlet boundary conditions.
We parametrize the uncertain diffusion coefficient $a$
with the basis from \eqref{eq:sinuseig} by
\begin{align}\label{eq:coeff}
 a(\bsy)(\bsx)
 &\,=\, a_0(\bsx) + \sum_{k_1, k_2 = 1}^\infty y_{k_1, k_2}\, \frac{1}{(k_1^2+k_2^2)^2} 
\,
 \sin( k_1 \pi x_1)\, \sin(k_2 \pi x_2) \nonumber \\
 &\,=\, a_0(\bsx) + \sum_{j=1}^\infty y_j\, \lambda_j\, \sin (k_{1,j} \,\pi x_1)\, \sin(k_{2,j} \,\pi x_2) \;,
\end{align}
where the sequence of pairs 
$ \left( (k_{1,j}, k_{2,j}) \right)_{j \in \mathbb{N} }$ 
is an ordering of the elements of 
$\mathbb{N} \times \mathbb{N}$ such that 
$k_{1,j}^2+k_{2,j}^2 \le k_{1,j+1}^2 + k_{2,j+1}^2$
for all $j \in \mathbb{N}$ 
(for cases where we have equality, the ordering is arbitrary). 
Then $\lambda_j = (k_{1,j}^2 + k_{2,j}^2)^{-2} \asymp j^{-2}$ (cf. \eqref{eq:Weyl}). 
We take $a_0(\bsx) \equiv 1$. In \eqref{eq:PDE1},
we use the forcing term $f(\bsx) = 100x_1$, and we
consider the quantity of interest in \eqref{eq:int} to be the integral of
the parametric solution $u(\bsy)$ over the physical domain $D$, i.e.,
$G(u(\bsy)) = \int_D u(\bsy)(\bsx) \,\rd\bsx$. 
The problem fits into the abstract framework with 
symmetric bilinear form $\fa(\bsy;\cdot,\cdot)$,
and with $\bcX = \bcY = H^1_0(D)$, and with
\[
  d = 2, \quad
  t=t'=1, \quad
  \tau = 2, \quad 
  \mbox{and any} \quad 
  \frac{1}{2} < p_0 \leq 1, 
\]
which implies by \eqref{eq:pt} that $p_1 = p_0/(1-p_0/2) > 2/3$.
The regularity spaces in Assumption \ref{ass:XtYt} are 
$\bcX_1 = (H^1_0\cap H^2)(D)$ and $\bcY'_1 = L^2(D)$.

We compare the single level algorithm \eqref{eq:qmcG} with the
multi-level algorithm \eqref{eq:QL*}. In both algorithms, we solve
\eqref{eq:PDE1} by the finite element method with continuous, piecewise
linear elements on a family of uniform triangulations with meshwidth
$h_\ell=2^{-(\ell+1)}$ for $\ell = 0,1,2\ldots$, and we use interlaced
polynomial lattice rules with $N=2^m$ points, $m\in\bbN$, constructed by
the fast CBC algorithm for SPOD weights from \cite{DKLNS13}.
We used the pruning strategy in \cite{GS14} to ensure that no repeated generating 
components are selected.

In the single level algorithm \eqref{eq:qmcG}, the meshwidth is
$h=h_L =2^{-(L+1)}$, leading to a finite element error of $\calO(h^2)$. 
We balance this $\calO(h^2)$ discretization error with the 
dimension truncation error of $\calO(s^{-2})$ and the 
QMC quadrature error of $\calO(N^{-2})$,
yielding the choice $s=h^{-1}=2^{L+1}$ and $N=h^{-1}$, i.e.,
$m=\log_2(h^{-1})=L+1$. This yields a total error of $\calO(h^2) =
\calO(\varepsilon)$ and cost of $\calO(Nh^{-2}s) = \calO(h^{-4})= \cO(
\varepsilon^{-2})$, ignoring logarithmic factors. Specifically, the SPOD
weights that enter the fast CBC construction are given by 
\cite[Equation (3.32) with (3.17)]{DKLNS13}, with base $b=2$, 
and with
\[
\mbox{
interlacing factor $\alpha = \lfloor 1/p_0 \rfloor + 1  = 2$, 
and}\;
\beta_j = \beta_{0,j} = \lambda_j = \frac{1}{(k_{1,j}^2 + k_{2,j}^2)^2} 
\;.
\]
The generating vectors were computed by
the fast CBC construction from \cite{DKLNS13} 
with Walsh constant $C=1.0$ (computations with 
$C=0.1$ and $C=0.01$ yielded different generating vectors,
but produced essentially the same results in this example). 
For base $b=2$, the choice $C=1.0$ is 
theoretically justified in \cite{Yoshiki2015}.

In the multi-level algorithm \eqref{eq:QL*}, for given maximal level $L$,
we take bisection refinement of the simplicial mesh in $D$ with $h_\ell =
2^{-(\ell+1)}$ for $\ell=0,1,\ldots,L$, and we follow \eqref{eq:def_s} to
select the truncation dimension as $s_\ell = \min(2^{4\ell},2^L)$, and
$m_\ell = \min(20,\lceil\log_2(N_\ell)\rceil)$, 
where by using \eqref{eq:def_N0} and \eqref{eq:def_N} 
for this particular case
$$
  N_0    = 2^{2L}, \;\; N_\ell = (2^{ 2(L-2\ell)} s^{-1}_\ell)^{2/5}
\;.
$$
By Theorem~\ref{thm:summary}, 
using formally the limiting values $p_0=1/2$ and $p_1 = 2/3$,
the total error is $\calO(h_L^2) = \calO(\varepsilon)$ at cost of 
$\calO\left(\sum_{\ell=0}^L N_\ell h_\ell^{-2} s_\ell \right) 
 = \calO(\varepsilon^{-3/2})$, ignoring logarithmic factors. 
The SPOD weights that enter the fast CBC construction are different from those
for the single-level algorithm; they are given by
\eqref{eq:choiceweight-1} and \eqref{eq:beta}. 
Again we take base $b=2$ and Walsh constant $C_{\alpha,b}=1$, 
but now with
\[
  \mbox{
  interlacing factor $\alpha = \lfloor 1/p_1 \rfloor + 1  =  2$, and}\;
  \beta_j = \beta_{1,j} = \lambda_j\,\pi\,\max(k_{1,j}, k_{2,j})
\;.
\]
In the QMC rules used in these experiments, 
we have taken in the definition \eqref{eq:beta} for the weights
$\beta_j$ to be $\beta_{1,j}$ rather 
than the precise maximum in \eqref{eq:beta}.

We remark that the error bound \eqref{eq:error-combined} allows
us to attain aforementioned convergence rates even by using on 
level $\ell = 0$ QMC quadratures with the SPOD weight sequence 
$\gamma_{\setu} =  \sum_{\bsnu_\setu\in\{1:\alpha\}^{|\setu|}}\!\!
  |\bsnu_\setu|!\,\prod_{j\in\setu} \big(2^{\delta(\nu_j,\alpha)}\beta_{0,j}^{\nu_j}\big)$ 
(cp. \eqref{eq:choiceweight-1}).
Using the (conservative) choice $\gamma_{\setu}$ from \eqref{eq:beta}
on \emph{all} discretization levels resulted 
in essentially the same numerical results.

We compute the solution up to level $L=8$, yielding $s=256$ active
dimensions. The reference solution was computed on level $L=9$ with
truncation dimension $s=1024$ and $N=2^{20}$ QMC points.
In Figure \ref{fig:convergence}, we used the work measures
$W_{\rm SLQMC} := h_L^{-2} s N$ 
and 
$W_{\rm MLQMC} := \sum_{\ell=0}^L N_\ell h_\ell^{-2} s_\ell$.

\begin{figure}[h!]
    \centering
    \includegraphics[width=\textwidth]{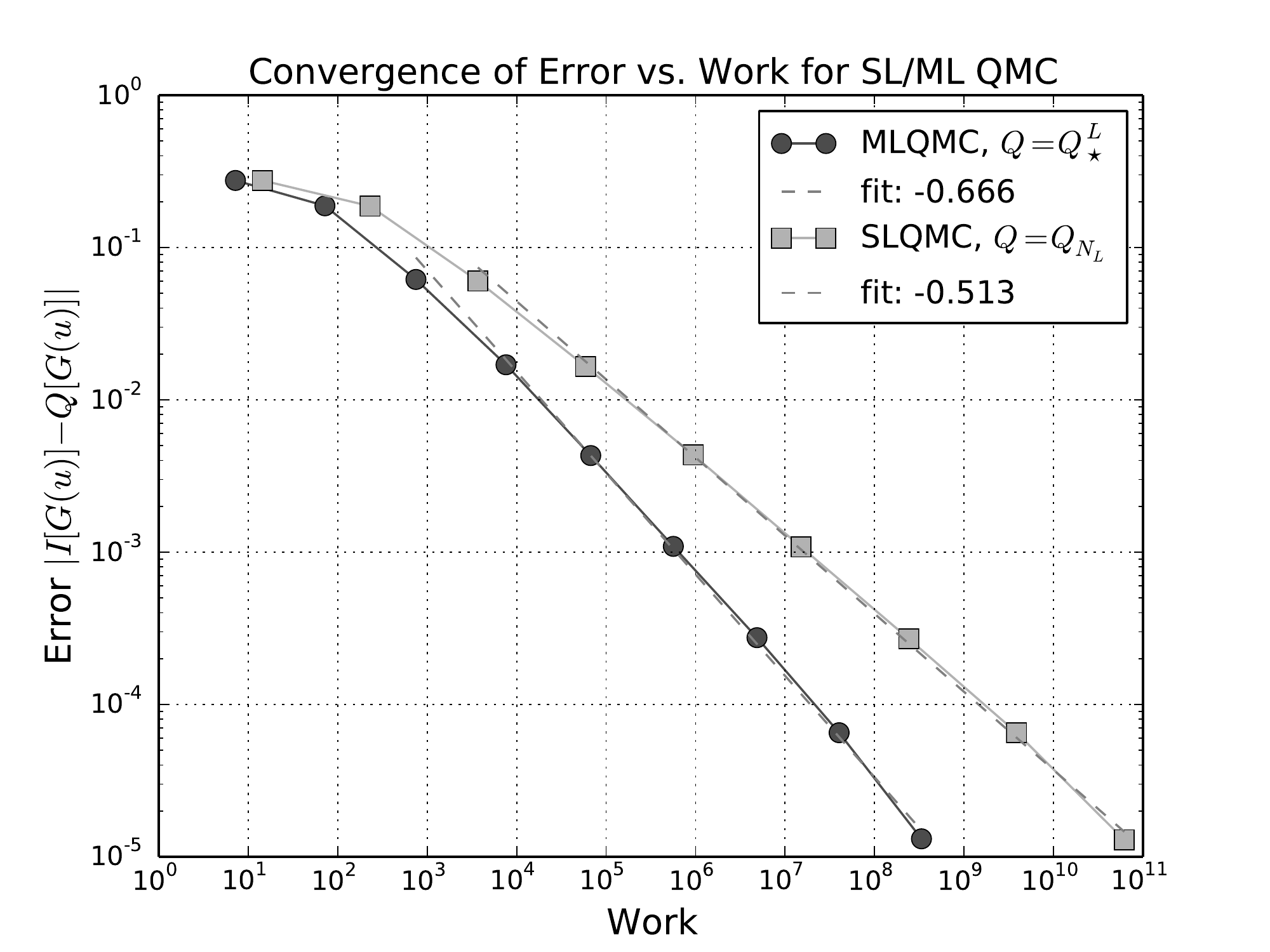}
    \caption{Convergence of the error vs. the work. 
    The theoretical rates are $-2/3$ for MLQMC and $-1/2$ for SLQMC.
    The slopes were computed by a linear fit using the last five measurements.}
    \label{fig:convergence}
\end{figure}
%
\section{Conclusions}
\label{sec:ConclGen}
We designed and analyzed a multi-level QMC Petrov-Galerkin discretization
for the approximate evaluation of functionals of solutions of countably
affine parametric operator equations. The presently proposed
algorithms extend on the one hand the single level higher order QMC
algorithms proposed in \cite{DKLNS13}, and on the other hand generalize
the multi-level approach of \cite{KSS13} from first order finite
elements and first order randomly-shifted lattice rules to higher order
in both cases.
At the same time, the class of admissible operator
equations covered by our analysis is considerably larger, allowing in
particular also indefinite, elliptic systems in non-smooth domains and
space-time Galerkin discretizations of linear parabolic evolution
problems.
Numerical tests confirmed the theoretical results, and indicate
that the presently obtained combined error bounds are
attained in the practical range of discretization parameters,
and that they can be used for practical algorithm design.
\section*{Acknowledgements}
%
Frances Kuo is the recipient of an Australian Research Council Future Fellowship (FT130100655). The research of the
first, second, and third authors was supported under the Australian
Research Council Discovery Projects funding scheme (project DP150101770).
This work was initiated while Christoph Schwab visited University of New
South Wales during the fall of 2013, while being supported in part by the
European Research Council (ERC) under AdG247277. The numerical results
presented in Section~\ref{sec:NumExp} have been performed by Robert N.
Gantner in his PhD research at the Seminar for Applied Mathematics
of ETH Z\"urich.

\end{document}